\newtheorem{lemma}{Lemma}[section]
\newtheorem{thm}[lemma]{Theorem}
\newtheorem{prop}[lemma]{Proposition}
\newtheorem{cor}[lemma]{Corollary}
\theoremstyle{definition}
\newtheorem{defn}[lemma]{Definition}
\newtheorem{quest}[lemma]{Question}
\newtheorem{rem}[lemma]{Remark}
\newtheorem{conv}[lemma]{Convention}
\theoremstyle{definition}
\long\def\comment#1\endcomment{}
\newcommand{\g} {\ensuremath {\gamma}}
\newcommand{\N}{\ensuremath {\mathbb{N}}}
\newcommand{\R} {\ensuremath {\mathbb{R}}}
\newcommand{\F} {\ensuremath {\mathbb{F}}}
\newcommand{\calC} {\ensuremath {\mathcal{C}}}
\newcommand{\calQ} {\ensuremath {\mathcal{Q}}}
\newcommand{\calP} {\ensuremath {\mathcal{P}}}
\newcommand{\calT} {\ensuremath {\mathcal{T}}}
\newcommand{\calH} {\ensuremath {\mathcal{H}}}
\newcommand{\calA} {\ensuremath {\mathcal{A}}}
\newcommand{\calU} {\ensuremath {\mathcal{U}}}
\definecolor{darkgreen}{cmyk}{1,0,1,.2}
\author{Alessandro Sisto}
\address{Mathematical Institute, 24-29 St Giles, Oxford OX1 3LB, United Kingdom}
\email{sisto@maths.ox.ac.uk}
\begin{document}

\title{On metric relative hyperbolicity}

\begin{abstract}
We show the equivalence of several characterizations of relative hyperbolicity for metric spaces, and obtain extra information about geodesics in a relatively hyperbolic space.
\par
We apply this to characterize hyperbolically embedded subgroups in terms of nice actions on (relatively) hyperbolic spaces. We also study the divergence of (properly) relatively hyperbolic groups, in particular showing that it is at least exponential.
\par
Our main tool is the generalization of a result proved by Bowditch for hyperbolic spaces: if a family of paths in a space satisfies a list of properties specific to geodesics in a relatively hyperbolic space then the space is relatively hyperbolic and the paths are close to geodesics.
\end{abstract}

\maketitle
\section{Introduction}
The notion of (strong) relatively hyperbolicity first appeared in \cite{Gro-hyp}, see also \cite{Bow-99-rel-hyp, Fa}, and has been studied only in the context of groups until Dru\c{t}u and Sapir \cite{DS-treegr} gave a metric characterization and discovered several properties of the geometry of Cayley graphs of relatively hyperbolic groups. Many examples of relatively hyperbolic spaces that are not Cayley graphs of relatively hyperbolic groups occur ``in nature'', as we will show in the last section of the present paper. It has to be noted that the term ``asymptotically tree-graded space'' in use in, e.g., \cite{DS-treegr} and \cite{D-relhyp}, is replaced everywhere in this paper by the term ``relatively hyperbolic space''. We use the latter term as the terminology in the group case is well-established and asymptotically tree-graded/relatively hyperbolic spaces are generalizations of (Cayley graphs of) relatively hyperbolic groups.
\par
We generalize the characterizations of relative hyperbolicity given in the context of groups in \cite{Gro-hyp,Bow-99-rel-hyp,GrMa-perfill} in terms of what we will call Bowditch space and in \cite{Fa} in terms of the coned-off graph, under a mild (necessary) hypothesis on the collection of candidate peripheral sets that is always satisfied for groups. These characterizations will be referred to as (RH1) and (RH2), and their precise statements are given in Section \ref{defn}. The characterization (RH0) is similar to one that can be found in \cite{D-relhyp} (which we will take as our main definition of relative hyperbolicity), except that it requires control of geodesic triangles only, as opposed to almost-geodesic triangles.
\begin{thm}
Let $X$ be a geodesic metric space and let $\calP$ be a collection of uniformly coarsely connected subsets of $X$. Then $X$ is hyperbolic relative to $\calP$ if and only if it satisfies (RH0), (RH1), (RH2) or (RH3).
\end{thm}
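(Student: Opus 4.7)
The plan is to establish the equivalences around the main definition of relative hyperbolicity (the asymptotically tree-graded property taken from \cite{D-relhyp}) used as a hub, verifying that each of (RH0), (RH1), (RH2), (RH3) is both implied by and implies that definition. For each such equivalence, one direction uses fine properties of geodesics in an asymptotically tree-graded space and should be mostly routine; the opposite direction is the substantive content and is where the main technical tool advertised in the abstract --- a Bowditch-type path characterization --- is applied.

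In the easy direction, starting from the main definition I would deduce (RH0) almost immediately, since (RH0) is advertised as a weakening that restricts attention to geodesic triangles rather than almost-geodesic ones. For (RH1) one cones off the peripheral sets to form the Bowditch space and argues hyperbolicity using that geodesics of $X$ project to uniform quasi-geodesics there; (RH2) is obtained from Farb's coned-off graph by a similar projection together with a bounded-penetration argument. The uniform coarse connectedness of the elements of $\calP$ enters here exactly so that the Bowditch space and coned-off graph are themselves (quasi-)geodesic and hyperbolicity is a meaningful statement.

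In the hard direction, I would produce, from (RH1) or (RH2), a distinguished family of paths in $X$ obtained by lifting geodesics of the Bowditch space or coned-off graph, straightened inside each peripheral region by using the uniform coarse connectedness hypothesis. I would then check that this family satisfies the hypotheses of the path-characterization theorem mentioned in the abstract. That theorem then simultaneously yields asymptotic tree-gradedness of $X$ relative to $\calP$ and the fact that the constructed paths are uniformly close to genuine geodesics; this last output is what subsequently implies (RH3) and recovers the refined statements about geodesics. For the implication from (RH0) to the main definition, the upgrade from geodesic to almost-geodesic triangles is again obtained by applying the same path characterization to the family of geodesics themselves, taking (RH0) as the input condition to be verified.

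The main obstacle is checking all hypotheses of the path characterization for the lifted paths produced out of (RH1) or (RH2): fellow-travel estimates, a bounded-penetration property, and control of how paths enter and exit peripheral sets must all be extracted from a single scalar piece of information, namely hyperbolicity of the auxiliary space. In the (RH2) setting, where the peripheral regions carry no a priori metric structure, the uniform coarse connectedness assumption is what lets one replace discrete walks in the coned-off graph by honest paths in $X$ with controlled behaviour near $\calP$; any weakening would obstruct the lifting step, which explains why it must appear in the hypothesis of the theorem.
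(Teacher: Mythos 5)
Your plan matches the paper's architecture: the main definition serves as the hub, the easy directions are routine, and each hard direction is obtained by lifting geodesics from the Bowditch space or coned-off graph (straightened inside peripheral regions via uniform coarse connectedness) and feeding them, together with their transient subsets, into the Guessing Geodesics Lemma, which is exactly how Propositions \ref{rh1}, \ref{rh2} and \ref{chartrans} are proved. The only points you compress are that the paper routes (RH0) to the main definition through (RH3) by first developing closest-point-projection lemmas to verify the relative Rips and entry/exit conditions, and that hyperbolicity of $Bow(X)$ in the forward direction is itself obtained by applying the Guessing Geodesics Lemma inside $Bow(X)$ to show it is hyperbolic relative to its horoballs; these are elaborations of, not departures from, your outline.
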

(Coarse connectedness is not required for (RH0),(RH3).) Moreover, we obtain some information regarding quasi-geodesics in $X$ in terms of geodesics in the Bowditch and coned-off spaces, see Corollaries \ref{rh1geod} and \ref{rh2geod}.
\par
The last characterization (RH3) is possibly, among those found so far, the characterization of relative hyperbolicity that is nearest to the standard definition of hyperbolicity. It relies on a \emph{relative Rips condition} which prescribes that every \emph{transient} point on a side of a geodesic triangle is close to a transient point on one of the other sides. A point on a geodesic is transient, roughly speaking, if it is not the midpoint of a long subgeodesic with endpoints in a suitable neighborhood of some peripheral set.
\par
An informal statement of our main tool, Theorem \ref{guessgeod}, is the following.
\par\medskip
{\bf Guessing Geodesics Lemma:} \emph{Let $X$ be a geodesic metric space and $\calP$ a collection of subsets of $X$. Suppose that for each pair of points $x,y$ a path $\eta(x,y)$ connecting them and a subset $trans(x,y)\subseteq \eta(x,y)$ have been assigned. Suppose that the pairs $trans(x,y)/\eta(x,y)$ behave like the pairs transient points/geodesics in a relatively hyperbolic space (e.g. they satisfy the relative Rips condition). Then $X$ is hyperbolic relative to $\calP$ and the set $trans(x,y)$ coarsely coincides with the set of transient points on a geodesic from $x$ to $y$.}
\par\medskip
There are a few variations of the Guessing Geodesics Lemma for hyperbolic spaces, the first of which appeared in \cite{Bow-cchyp}, where it is used to show hyperbolicity of curve complexes. Our formulation is closer to that given in \cite{Ha-cchyp}.
Here is an example of how we will use the Guessing Geodesics Lemma. When $\calP$ is a collection of (say connected) subsets of the metric space $X$, the Bowditch space associated to the pair $(X,\calP)$ is obtained gluing combinatorial horoballs to each $P\in\calP$ (the Bowditch space also depends on other choices, this is ignored here). Suppose that the Bowditch space for the pair $(X,\calP)$ is hyperbolic. Then for each pair of points $x,y$ in $X$ we can define a path connecting them by substituting each subpath of a geodesic $\gamma$ from $x$ to $y$ contained in a horoball with any path in the corresponding $P\in\calP$, and define $trans(x,y)=\gamma\cap X$. Due to the hyperbolicity of the Bowditch space, the relative Rips condition is easily checked (and the other properties required by the Guessing Geodesics Lemma are easily checked as well). Hence, we will easily obtain that if the Bowditch space for the pair $(X,\calP)$ is hyperbolic then $X$ is hyperbolic relative to $\calP$ (for $\calP$ coarsely connected).
\par
The last section is dedicated to applications. We show that a construction due to Bestvina, Bromberg and Fujiwara \cite{BBF} gives rise to relatively hyperbolic spaces. This is weaker than a very recent result by Hume \cite{H-percomm}, but it will suffice for our further applications. We also provide alternative characterizations of the notion of hyperbolically embedded subgroup, as defined in \cite{DGO}. Examples of hyperbolically embedded subgroups include peripheral subgroups of relatively hyperbolic groups and maximal virtually cyclic subgroups containing a pseudo-Anosov (resp. a rank one element, iwip, element acting hyperbolically) in a mapping class group (resp. a $CAT(0)$ group, $Out(F_n)$, a group acting acylindrically on a tree), and therefore it is a quite general notion. We show that, when $H$ is a subgroup of $G$, $H$ is hyperbolically embedded in $G$, as defined in \cite{DGO}, if and only if $G$ acts with certain properties on a relatively hyperbolic space, and also if and only if $G$ acts with certain properties on a hyperbolic space with $H$ acting parabolically (in both cases, the extra condition of properness for the actions gives back a definition of relative hyperbolicity). Here is a simplified version of one of the results.

 \begin{thm}
 Let $\{H_\lambda\}_{\lambda\in\Lambda}$ be a finite collection of subgroups of the group $G$ and let $X$ be a (possibly infinite) generating system for $G$. Then $\{H_\lambda\}$ is hyperbolically embedded in $(G,X)$ if and only if the Cayley graph $\Gamma=Cay(G,X)$ is hyperbolic relative to the left cosets of the $H_\lambda$'s and $d_\Gamma|_{H_\lambda}$ is quasi-isometric to a word metric.
\end{thm}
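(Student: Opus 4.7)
My plan is to relate the two definitions through characterization (RH2). The starting observation is that the Cayley graph $\Gamma(G, X\sqcup\bigsqcup_\lambda H_\lambda)$ appearing in the Dahmani--Guirardel--Osin definition is $G$-equivariantly quasi-isometric to the coned-off graph $\hat\Gamma$ obtained from $\Gamma=\Gamma(G,X)$ by attaching a cone to each left coset of each $H_\lambda$: the clique of diameter $2$ on each coset in $\Gamma(G,X\sqcup\bigsqcup H_\lambda)$ and the single cone vertex at distance $1$ from the same coset in $\hat\Gamma$ differ by a bounded factor. In particular, hyperbolicity passes between the two graphs.

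For the forward direction, assume $\{H_\lambda\}$ is hyperbolically embedded. The above quasi-isometry transfers hyperbolicity of $\Gamma(G,X\sqcup\bigsqcup H_\lambda)$ to hyperbolicity of $\hat\Gamma$. To invoke (RH2) I must additionally verify the mild coarse-fineness hypothesis on the coset collection; the local finiteness of each $\hat d_\lambda$ supplies precisely the required control, since it rules out infinitely many short $\hat\Gamma$-paths between two points of the same coset that avoid the cone over that coset. Thus $\Gamma$ is hyperbolic relative to the coset collection. The QI-to-word-metric statement then follows from the structural description of geodesics in $\Gamma$ provided by Corollaries \ref{rh1geod} and \ref{rh2geod}: these yield that $d_\Gamma|_{H_\lambda}$ is coarsely geodesic (a $\Gamma$-geodesic between two points of $H_\lambda$ tracks $H_\lambda$ up to bounded deviations), while local finiteness of $d_\Gamma|_{H_\lambda}$ follows from the obvious inequality $\hat d_\lambda\le d_\Gamma|_{H_\lambda}$; with both properties in hand, any sufficiently large $d_\Gamma$-ball in $H_\lambda$ serves as a generating set that makes the identity into a quasi-isometry to the corresponding word metric.

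For the converse, assume $\Gamma$ is relatively hyperbolic with respect to the coset collection and that $d_\Gamma|_{H_\lambda}$ is quasi-isometric to a word metric. Hyperbolicity of $\hat\Gamma$, and hence of $\Gamma(G,X\sqcup\bigsqcup H_\lambda)$, is immediate from (RH2). The nontrivial task is verifying local finiteness of $\hat d_\lambda$: I would argue by contradiction, supposing an infinite sequence $(h_n)$ in $H_\lambda$ with $\hat d_\lambda(1,h_n)$ bounded. A pigeonhole on the bounded-length ``type'' sequences of realizing $\hat\Gamma$-paths avoiding the cone over $H_\lambda$ produces an infinite subfamily with paths of identical combinatorial type; the bounded-coset-penetration information packaged into Corollaries \ref{rh1geod}/\ref{rh2geod} then forces $d_\Gamma(1,h_n)$ to remain bounded along this subfamily, contradicting local finiteness of $d_\Gamma|_{H_\lambda}$.

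The main technical obstacle, shared by both directions, is the delicate comparison between the two metrics $\hat d_\lambda$ and $d_\Gamma|_{H_\lambda}$ on $H_\lambda$: they are not related by a simple quasi-isometry (in the free product $H_\lambda\ast H_\mu$, for instance, $\hat d_\lambda$ is infinite off the diagonal), but by more subtle properness statements that in each direction rest on the structural theorems about quasi-geodesics in relatively hyperbolic spaces developed earlier in the paper.
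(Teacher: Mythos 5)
Your forward direction has a genuine gap: you never verify the BCP property, and (RH2) is \emph{not} just ``the coned-off graph is hyperbolic plus a mild hypothesis on the pieces.'' Proposition \ref{rh2} requires hyperbolicity of $\hat\Gamma$ \emph{and} the BCP property; the ``mild hypothesis'' (uniform coarse connectedness of the cosets) is a separate, genuinely easy condition, and the control you extract from local finiteness of $\hat d_\lambda$ addresses neither of these -- it is simply a restatement of local finiteness. Hyperbolicity of $Cay(G,X\sqcup\calH)$ alone is weak relative hyperbolicity and does not imply relative hyperbolicity; the entire content of the implication $\hookrightarrow_h\;\Rightarrow$ rel.\ hyp.\ is deducing BCP from hyperbolicity of the coned-off graph \emph{together with} local finiteness of the $\hat d_\lambda$. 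The paper does this by invoking \cite[Proposition 4.14]{DGO}, which bounds the length of isolated components in quasi-geodesic polygons of $Cay(G,X\sqcup\calH)$: part (1) of BCP follows directly, and part (2) by applying that proposition to a quadrangle built from the two paths, a short geodesic, and a $P$-component. Some input of this kind is indispensable, and your proposal contains no substitute for it. (Your derivation of the quasi-isometry of $d_\Gamma|_{H_\lambda}$ with a word metric from $\hat d_\lambda\le d_\Gamma|_{H_\lambda}$ plus quasiconvexity of peripheral sets is fine, but it is conditional on first establishing relative hyperbolicity.)

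Your converse direction, by contrast, is sound in outline and is a genuinely more direct route than the paper's. The paper deduces $(2)\Rightarrow(1)$ by passing through its conditions $(3)$ and $(4)$ and then adapting the proof of \cite[Theorem 4.42]{DGO} via the Bestvina--Bromberg--Fujiwara projection complex (constructing a generating set from edges of $\calP_K(\mathbf{Y})$ and reproving the analogues of \cite[Lemmas 4.49, 4.50]{DGO}); that detour is needed for the more general statements $(3)$ and $(4)$ but not for the equivalence you are proving. Your argument for local finiteness of $\hat d_\lambda$ can even be streamlined: given $\hat d_\lambda(1,h)\le N$, a realizing path avoiding the clique on $H_\lambda$ is (after removing backtracking) an $(N,N)$-quasi-geodesic standard path with no component in the coset $H_\lambda$ itself, so BCP(1) applied to it and to the single component $e_{1,h}$ forces $d_\Gamma(1,h)\le K(N)$; local finiteness of $d_\Gamma|_{H_\lambda}$ then finishes -- no pigeonhole on combinatorial types is needed. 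Do make explicit that ``quasi-isometric to a word metric'' is being used to give properness (local finiteness) of $d_\Gamma|_{H_\lambda}$, since that is the only way the hypothesis enters.
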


This result implies in particular that an infinite order element is weakly contracting as defined in \cite{Si-contr} if and only if it is contained in a virtually cyclic hyperbolically embedded subgroup, see Corollary \ref{weakcontr}. We will use it to show, with an additional hypothesis, that hyperbolically embedded subgroups are preserved in subgroups of the ambient group, i.e. that if $H$ is hyperbolically embedded in $G$ and $K<G$ then $H\cap K$ is hyperbolically embedded in $K$ (Corollary \ref{hypembsgp}). 
We will also study the divergence function of relatively hyperbolic groups. Roughly speaking, the divergence of a metric space measures the length of detours avoiding a specified ball as a function of the radius of the ball. It has been first studied in \cite{Gr-asinv} and \cite{Ger-div}, and in recent years in \cite{Be-asgeommcg,OOS-lacun,DR-divteich,DMS-div,BC-raagcones, BC-shortconj}. The divergence of one-ended hyperbolic groups is known to be exponential; the exponential upper bound is folklore, even though there seems to be no published reference for it. In Subsection \ref{div} we show the following.

\begin{thm}
 Suppose that the one-ended group $G$ is hyperbolic relative to the (possibly empty) collection of proper subgroups $H_1,\dots, H_k$. Then
$$e^n \preceq Div^G(n)\preceq \max\{e^n,e^nDiv^{H_i}(n)\}.$$
Moreover, if $G$ is finitely presented then $Div^G(n)\asymp e^n$.
\end{thm}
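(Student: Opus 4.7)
My plan is to establish the lower and upper bounds separately and then derive the moreover clause. The lower bound will come from the hyperbolicity of the Bowditch space $\calB$ together with one-endedness of $G$, the upper bound from the geodesic decomposition provided by (RH0), and the moreover clause from the folklore exponential upper bound on divergence of finitely presented one-ended relatively hyperbolic groups.

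For the lower bound $e^n\preceq Div^G(n)$, I would first use one-endedness of $G$ to produce, for each large $n$, a pair $x,y\in G$ with $d_G(x,y)=2n$ and a $G$-geodesic $\gamma$ from $x$ to $y$ whose midpoint $e$ is transient (in the sense of (RH3)) and at $G$-distance at least $cn$ from every peripheral coset. One-endedness supplies long transient subsegments, and finiteness of the family $\{H_1,\dots,H_k\}$ together with the coarse convexity of cosets lets us avoid the $cn$-neighborhoods of all relevant cosets near the midpoint. Given such a triple, I would regard any detour $\sigma$ from $x$ to $y$ in $G\setminus B_G(e,n/4)$ as a path in $\calB$. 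Because $e$ is $G$-far from every peripheral, any point $p\in X$ with $d_\calB(p,e)<n/4-C$ must already satisfy $d_G(p,e)\le n/4$: a $\calB$-geodesic from $p$ to $e$ would otherwise have to shortcut through a horoball, which requires a peripheral coset close to $e$. Thus $\sigma$ avoids a sizeable $\calB$-ball about $e$, and the same transience argument makes $e$ a coarse midpoint of a $\calB$-geodesic from $x$ to $y$; the standard exponential divergence estimate in the $\delta$-hyperbolic space $\calB$ then forces $L(\sigma)\ge e^{cn}$.

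For the upper bound, I would fix $x,y$ with $d_G(x,y)\le n$ and a basepoint $e$ realizing divergence, pick a $G$-geodesic $\gamma$ from $x$ to $y$, and use (RH0) to write $\gamma$ as an alternating concatenation of transient arcs and arcs each following a single peripheral coset. If $\gamma$ does not enter $B_G(e,n/2)$ no detour is needed; otherwise isolate the arc of $\gamma$ making the closest approach to $e$ and replace it. If this approach is through a transient arc, then hyperbolicity of $\calB$ provides a detour around $e$ of $\calB$-length $e^{O(n)}$, and transience of $e$ (together with $e$ being $G$-far from nearby peripherals in this case) lets the detour be pushed into $X$ without deep horoball excursions, so the $G$-length stays $e^{O(n)}$. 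If the approach is through a peripheral coset $gH_i$, then a detour of length at most $Div^{H_i}(n)$ inside $gH_i$ exists by definition of $Div^{H_i}$ (using that the induced metric on $gH_i$ is quasi-isometric to the word metric on $H_i$), and concatenating it with exponential transient detours on either side bounds the total length by $e^n\cdot Div^{H_i}(n)$. Taking the worse of the two cases yields $\max\{e^n,e^n\,Div^{H_i}(n)\}$.

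For the moreover clause, when $G$ is finitely presented I would invoke the folklore exponential upper bound $Div^G(n)\preceq e^n$ for this class of groups (an extension of the bound for one-ended hyperbolic groups, obtainable via a Dehn-function argument using that Osin's theorem forces the peripherals to be finitely presented); together with the lower bound this gives $Div^G(n)\asymp e^n$. The step I expect to be hardest is in the lower bound: producing a midpoint $e$ that is simultaneously transient and $G$-far from every peripheral coset is delicate, since without the second property a detour could shortcut through a horoball and shrink the effective $\calB$-distance near $e$, defeating the exponential estimate from hyperbolicity of $\calB$.
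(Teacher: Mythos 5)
There are genuine gaps in both bounds. For the lower bound, everything hinges on producing a geodesic whose midpoint is transient and lies $cn$-far from every peripheral coset, and you correctly identify this as the hardest step --- but your proposed mechanism (``one-endedness supplies long transient subsegments'') is not an argument, and one-endedness is in fact irrelevant here. The paper resolves this by invoking Osin's theorem \cite{Os-hypel} on the existence of a hyperbolic element $g$ such that $(G,\{H_i\}\cup\{E(g)\})$ is still relatively hyperbolic: the axis of $g$ is then a bi-infinite geodesic $\alpha$ with $trans_{\mu,R}(\alpha)=\alpha$, whose intersection with every $N_\mu(gH_i)$ is uniformly bounded. With that axis in hand the paper does not even pass to the Bowditch space; it runs the dyadic subdivision argument directly in $G$ using the relative Rips condition (as in Lemma \ref{logest}) to show that any path $\beta$ joining $a,b\in\alpha$ on opposite sides of $c$ satisfies $d(c,\beta)\leq C\log_2 l(\beta)+C$. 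Your Bowditch-space route could be made to work once the configuration exists, but without an input like \cite{Os-hypel} the configuration itself is unproved.

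For the upper bound the central claim --- that ``hyperbolicity of $\calB$ provides a detour around $e$ of $\calB$-length $e^{O(n)}$'' --- is false: hyperbolicity alone gives no upper bound on divergence (a tree is hyperbolic and has infinite divergence), which is exactly why the paper must prove Lemma \ref{atmostexp} separately and why the exponential upper bound even for one-ended hyperbolic groups is nontrivial. The paper instead starts from an \emph{arbitrary} path $\beta$ avoiding the ball, whose existence (with no length control) comes from one-endedness via \cite[Lemma 3.4]{DMS-div}, radially projects a $1$-dense chain of points of $\beta$ onto the annulus $B(c,d(b,c))\setminus B$, observes that this annulus contains $\preceq e^n$ group elements, and uses the relative Rips condition to show consecutive projected points are either uniformly close or lie near a common peripheral coset $P_i$, where a sub-detour of length $\preceq Div^{H_i}(n)$ is inserted. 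The product $e^n\cdot Div^{H_i}(n)$ arises because up to $e^n$ such peripheral detours may be needed --- your single-arc replacement of ``the closest approach'' cannot produce this bound and does not handle a geodesic whose excursion near $e$ contains many transient and peripheral pieces. Your moreover clause is essentially right in spirit, but the exponential bound for finitely presented one-ended groups is not something you may cite as folklore here: it is Lemma \ref{atmostexp}, proved by the same annulus-counting observation, not by a Dehn-function argument.
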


We make the observation that the divergence of a one-ended finitely presented group (e.g., a hyperbolic group) is at most exponential (Lemma \ref{atmostexp}), a fact that does not seem to be recorded in the literature.
\par
It is well-known that finite configurations of points in a hyperbolic space can be approximated by trees, and this is very helpful to reduce computations in hyperbolic spaces to computations in trees. In Subsection \ref{tg} we will show a similar result for relatively hyperbolic spaces.
\par
Finally, in Subsection \ref{comb} we give an example of how the Guessing Geodesics Lemma can be used to show a known combination result for relative hyperbolicity.
\par
Further applications to certain notions of boundary at infinity of a space will be presented in \cite{S-compmap}.

\subsection*{Acknowledgement} The author would like to thank Cornelia Dru\c{t}u, Vincent Guirardel and Denis Osin for very helpful comments and suggestions.

\section{Main definition of relative hyperbolicity}

The following definitions are taken from \cite{DS-treegr, D-relhyp} (except that of being weakly $(*)-$asymptotically tree-graded, which is relevant to (RH0)). We will use the notation $N_K(A)=\{x\in X| d(x,A)\leq K\}$ for the $K-$neighborhood of the subset $A$ of the metric space $X$.

\begin{defn}
\label{alpha1}
The collection $\calP$ of subsets of the geodesic metric space $X$ is said to satisfy $(\alpha_1)$ if for each $K$ there exists $B$ so that $diam(N_K(P)\cap N_K(Q))\leq B$ for each distinct $P,Q\in \calP$.
\end{defn}

\begin{defn}
The collection $\calP$ of subsets of the geodesic metric space $X$ is said to satisfy $(\alpha_2)$ if there exists $\epsilon\in (0,1/2)$ and $M\geq 0$ so that for each $P\in\calP$ and $x,y\in X$ with $x,y\in N_{\epsilon d(x,y)}(P)$ any geodesic from $x$ to $y$ intersects $N_M(P)$.
\end{defn}

 Continuous quasi-geodesics will be referred to as \emph{almost-geodesics}. Notice that in a geodesic metric space each quasi-geodesic is within bounded Hausdorff distance from an almost-geodesic (with related constants).

\begin{defn}
 The geodesic metric space $X$ is said to be $(*)-$asymptotically tree-graded (resp. weakly $(*)-$asymptotically tree-graded) with respect to the collection of subsets $\calP$ if the following holds. For every $C\geq 0$ there exist $\sigma,\delta$ so that every triangle with $(1,C)-$almost geodesic edges (resp. every geodesic triangle) $\gamma_1,\gamma_2,\gamma_3$ satisfies either
\par
$(C)$ there exists a ball of radius $\sigma$ intersecting all sides of the triangle, or
\par
$(P)$ there exists $P\in\calP$ with $N_\sigma (P)$ intersecting all sides of the triangle and the entrance (resp. exit) points $x_i$ (resp. $y_i$) of the sides $\gamma_i$ in (from) $N_\sigma(P)$ satisfy $d(y_i,x_{i+1})\leq \delta$.
\end{defn}
\begin{figure}[h]
\centering\includegraphics{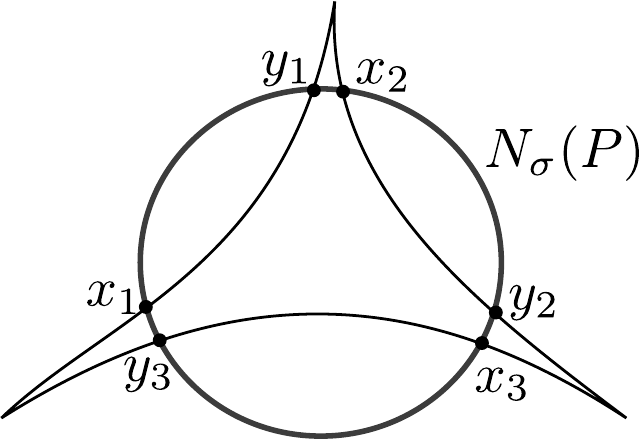}
\end{figure}

If a group is $(*)-$asymptotically tree-graded with respect to the left cosets of a collection of subgroups $H_1,\dots, H_n$ then it is said to be $(*)-$hyperbolic relative to $H_1,\dots, H_n$. This property is relevant to the Rapid Decay property, important in K-theory: it is shown in \cite{DS-RD} that if $G$ is $(*)-$hyperbolic relative to $H_1,\dots, H_n$ then $G$ satisfies the Rapid Decay property if and only if $H_1,\dots,H_n$ do.
\par
We take the following as our main definition of relative hyperbolicity.
\begin{defn}\cite{D-relhyp}
 The geodesic metric space $X$ is \emph{hyperbolic relative} to the collection of subsets $\calP$, called \emph{peripheral sets}, if $\calP$ satisfies $(\alpha_1)$ and $(\alpha_2)$ and $X$ is $(*)-$asymptotically tree-graded with respect to $\calP$.
\end{defn}

The definition above is known to be equivalent to the definition given in \cite{DS-treegr} in terms of asymptotic cones as well as the definition given in \cite{Si-proj} in terms of (almost) closest-points projections on the peripheral sets, so that we can use results from the said papers.

The following a priori less restrictive definition will turn out to be equivalent to the one above, see Proposition \ref{rh0}.
\begin{defn}
 The pair $(X,\calP)$, where $X$ is a geodesic metric space and $\calP$ is a collection of subsets, is said to satisfy (RH0) if $\calP$ satisfies $(\alpha_1)$ and $(\alpha_2)$ and $X$ is weakly $(*)-$asymptotically tree-graded with respect to $\calP$.
\end{defn}

We record the following result that will often be tacitly used.

\begin{lemma}\cite{DS-treegr}
 Let $\calP,\calQ$ be collections of subsets of the geodesic metric space $X$. Suppose that there exists $K\geq 0$ and a bijection between $\calP$ and $\calQ$ so that corresponding elements are at Hausdorff distance bounded by $K$. Then $X$ is hyperbolic relative to $\calP$ if and only if it is hyperbolic relative to $\calQ$.
\end{lemma}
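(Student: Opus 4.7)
The plan is to verify each of the three defining conditions for $\calQ$ (namely $(\alpha_1)$, $(\alpha_2)$, and being $(*)-$asymptotically tree-graded) starting from the corresponding conditions for $\calP$. Denote by $\phi\co\calP\to\calQ$ the bijection and write $Q=\phi(P)$; the Hausdorff-distance hypothesis yields the sandwich $N_r(P)\subseteq N_{r+K}(Q)\subseteq N_{r+2K}(P)$ for every $r\geq 0$, and essentially the entire proof amounts to chasing these inclusions through the definitions.

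The conditions $(\alpha_1)$ and $(\alpha_2)$ transfer by direct bookkeeping. For $(\alpha_1)$, given $K'$ one feeds $K'+K$ into the condition for $\calP$ and observes that $N_{K'}(Q_i)\subseteq N_{K'+K}(P_i)$, so the bound on diameters of pairwise intersections works for $\calQ$ with the same constant. For $(\alpha_2)$, let $\epsilon, M$ be the constants for $\calP$, take $\epsilon'=\epsilon/2$, and split into two regimes: when $d(x,y)\geq 2K/\epsilon$ we have $x,y\in N_{\epsilon' d(x,y)+K}(P)\subseteq N_{\epsilon d(x,y)}(P)$, so any geodesic from $x$ to $y$ meets $N_M(P)\subseteq N_{M+K}(Q)$; when $d(x,y)<2K/\epsilon$ the point $x$ itself lies in $N_K(Q)$, so the geodesic trivially meets $N_{M'}(Q)$ for any $M'\geq K$. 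Hence $\epsilon'=\epsilon/2$ and $M'=\max(M+K,K)$ witness $(\alpha_2)$ for $\calQ$.

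The $(*)-$asymptotically tree-graded condition is where the real care is needed. Given $C\geq 0$, invoke the condition for $\calP$ with constants $\sigma,\delta$. Case $(C)$ of the trichotomy makes no reference to the collection and passes unchanged. In case $(P)$ with witness $P\in\calP$ and entrance/exit points $x_i,y_i$, the set $N_\sigma(P)\subseteq N_{\sigma+K}(Q)$ still meets all three sides, so one takes $Q=\phi(P)$ with inflated radius $\sigma+K$ as the new peripheral witness. What must then be checked is that the new entrance/exit points $x_i',y_i'$ of $\gamma_i$ into $N_{\sigma+K}(Q)$, which may shift along $\gamma_i$ relative to $x_i,y_i$, still satisfy $d(y_i',x_{i+1}')\leq\delta'$ for some constant $\delta'$. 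Between $x_i'$ and $x_i$ the almost-geodesic $\gamma_i$ lies in $N_{\sigma+2K}(P)\setminus N_\sigma(P)$, and an application of $(\alpha_2)$ for $\calP$ to the endpoints of this subsegment (together with the stability properties of almost-geodesics) bounds its length in terms of $\sigma,K,C,\epsilon,M$; the same estimate applies to $y_i'$ versus $y_i$. The main obstacle is precisely this length bound, a mildly technical consequence of the fact that almost-geodesics cannot linger in a thick neighborhood of a peripheral set without dipping into a thin one. A cleaner alternative is to work in the asymptotic cone, where $P$ and $Q$ have identical ultralimits and the tree-graded structure is therefore literally preserved under the change of collection --- this is the route originally taken in \cite{DS-treegr}, to which the statement is attributed.
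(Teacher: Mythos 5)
The paper does not actually prove this lemma---it is quoted from \cite{DS-treegr}---so there is no internal proof to compare against; I am judging your argument on its own. Your transfers of $(\alpha_1)$ and $(\alpha_2)$ are correct, and you have correctly isolated the crux: showing that the entrance (and exit) points of a side $\gamma_i$ into $N_{\sigma+K}(Q)$ are uniformly close to those into $N_\sigma(P)$. But the mechanism you propose for that step does not work. First, the claim that the subsegment of $\gamma_i$ between $x_i'$ and $x_i$ lies in $N_{\sigma+2K}(P)$ is unjustified: only its endpoints are known to lie there (the interior merely avoids $N_\sigma(P)$), and containment of the whole subsegment in a bounded neighborhood of $P$ is itself a quasi-convexity statement for almost-geodesics that has to be imported from the theory. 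Second, and more seriously, $(\alpha_2)$ applied to the endpoints does not bound the length of this subsegment: $(\alpha_2)$ only asserts that a \emph{geodesic} between two points deep in a neighborhood of $P$ passes through $N_M(P)$; it bounds no distances, it says nothing about the almost-geodesic subpath of $\gamma_i$ itself, and even the upgraded statement for almost-geodesics only forces the subpath into $N_{M'}(P)$ for some $M'$ that need not be smaller than $\sigma$, so no contradiction with ``$x_i$ is the first point in $N_\sigma(P)$'' is obtained. A priori an almost-geodesic can run parallel to $P$ in the annulus between $N_\sigma(P)$ and $N_{\sigma+2K}(P)$ for a long time as far as $(\alpha_2)$ is concerned. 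What is actually needed is the statement that for $r$ above a threshold the first point of a (quasi-)geodesic from $x$ in $N_r(P)$ lies at uniformly bounded distance from a closest-point projection $\pi_P(x)$ (compare Lemma \ref{proj2:lem} and Corollary \ref{firstpoint:cor} of this paper, or \cite[Lemma 1.15]{Si-proj}); applying that to $N_\sigma(P)$ and $N_{\sigma+2K}(P)$, between which $N_{\sigma+K}(Q)$ is sandwiched, gives the desired bound on $d(x_i,x_i')$. This is a genuine consequence of relative hyperbolicity, not of $(\alpha_2)$ alone, so as written the key step is a gap rather than a ``mildly technical'' one.

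Your closing remark is the correct escape: in \cite{DS-treegr} the statement is essentially immediate because $P$ and $Q=\phi(P)$ have identical ultralimits, so every asymptotic cone is tree-graded with respect to the same collection of pieces, and the cone characterization is insensitive to the change of collection. In the framework of the present paper this route is legitimate because the equivalence of the $(*)$-asymptotically tree-graded definition with the asymptotic cone definition is explicitly invoked. If you want a proof staying within the $(*)$-ATG framework, replace the $(\alpha_2)$ step by the projection lemmas above.
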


We will say that a (finitely generated) group is \emph{hyperbolic relative} to the collection of its (finitely generated) subgroups $H_1,\dots,H_n$ if some (any) Cayley graph of $G$ is hyperbolic relative to the collection of the left cosets of the $H_i$'s.
\begin{conv}
 All groups we consider in the present paper will be tacitly assumed to be finitely generated.
\end{conv}

\section{The characterizations}

\label{defn}
When $\Gamma$ is a graph, we will denote by $\Gamma^0$ and $\Gamma^1$ the vertex set and edge set of $\Gamma$.
\begin{defn}

 	Suppose $\Gamma$ is a connected metric graph.
 	The \emph{horoball} $\calH(\Gamma)$ is defined to be the graph with
 	vertex set $\Gamma^0 \times \N$ and
 	edges $((v, n), (v, n+1))$ of length $1$, for all $v \in \Gamma^0$, $n \in \N$,
 	and edges $((v, n), (v', n))$ of length $e^{-n}l((v,v'))$, for all $(v, v') \in \Gamma^1$.
\end{defn}

We will call \emph{vertical ray} a geodesic ray in $\calH(\Gamma)$ obtained by concatenating all edges of type $((v,n),(v,n+1))$, for some $v\in\Gamma^0$. It is readily seen that it actually is a geodesic ray.
It can be checked that $\calH(\Gamma)$ is $\delta-$hyperbolic for some $\delta$ depending on the supremum of the lengths of the edges of $\Gamma$ (if finite). In fact, the horoballs as defined above are quasi-isometric to the combinatorial horoballs defined in \cite{GrMa-perfill}. (One can also derive this directly from a description of the geodesics in $\calH(\Gamma)$ as the concatenation of two vertical and one short horizontal segment, see \cite[Lemma 3.10]{GrMa-perfill} or \cite[Lemma 2.2]{MS-qhyp}.)
Also, for each $n$ the \emph{level} $\Gamma^0\times\{n\}$ lies within uniformly bounded Hausdorff distance from a horosphere defined in the usual sense, as a level set of a Busemann function.

\begin{defn}
 A \emph{maximal $k-$net} in a metric space $X$ is a maximal collection of distinct points at reciprocal distance at least $k$.
An \emph{approximation graph} $\Gamma$ with constants $k,R$ for a metric space $X$ is a connected metric graph whose vertex set is a maximal $k-$net in $X$ and so that two distinct vertices are connected by an edge of length $R$ if and only if their distance (as points of $X$) is at most some fixed $R$.
\end{defn}

We emphasize that approximation graphs are required to be connected, so that they can be endowed with the path metric. Such metric is clearly quasi-isometric to that of $X$ when $X$ is, say, a length space.

\begin{defn}
\label{bowsp}
 Let $X$ be a geodesic metric space and let $\calP$ be a collection of subsets of $X$. Fix approximation graphs $\Gamma_P$ for each $P\in\calP$. The \emph{Bowditch space} $Bow(X,\{\Gamma_P\}_{P\in\calP})$ is the metric space obtained identifying $\Gamma^0_P\subseteq \calH(\Gamma_P)$ with the corresponding maximal net in $P$ for each $P\in\calP$.
\end{defn}

We will often write $Bow(X)$ instead of $Bow(X,\{\Gamma_P\})$. Notice that the metric $d_X$ of $X$ and the restriction to $X$ of the metric of $Bow(X)$ are coarsely equivalent. The following is the metric analogue of the characterization of relatively hyperbolic groups that can be found in \cite{Bow-99-rel-hyp,GrMa-perfill}.

\begin{defn}
The pair $(X,\calP)$ satisfies (RH1) if each (some) Bowditch space $Bow(X)$ is hyperbolic.
\end{defn}

The equivalence of each/some, under a mild hypothesis on $\calP$, will be shown later, in Proposition \ref{rh1}.

\begin{defn}
 Let $X$ be a geodesic metric space and let $\calP$ be a collection of subsets of $X$. Fix maximal $k-$nets $N_P$ for each $P\in\calP$. The \emph{coned-off space} $\hat{X}_{\{N_P\}}$ is the metric space obtained adding segments $e_{x,y}$ of length $1$, called \emph{$P$-components}, connecting all pairs of distinct points $x,y\in N_P$, with $P\in\calP$.
\end{defn}
We will often write $\hat{X}$ instead of $\hat{X}_{\{N_P\}}$.
The paths defined below will play the role that combinatorial paths play in coned-off graphs of groups.

\begin{defn}
 We will say that two $P-$components (for the same $P\in\calP$) are \emph{tied}. A \emph{standard path} in a coned-off space $\hat{X}_{\{N_P\}}$ is a concatenation of geodesics in $X$ and $\calP$-components. A $\calP-$component of a standard path $\gamma$ is \emph{isolated} if it is not tied to any other $\calP-$components of $\gamma$. A standard path is said to be \emph{without backtracking} if all its $\calP-$components are isolated. The \emph{length} of a $\calP-$component $e_{x,y}$ is simply $d_X(x,y)$.
\end{defn}

\begin{defn}
 The coned-off space $\hat{X}_{\{N_P\}}$ is said to satisfy the BCP property if the following holds. For each $L$ there exists $K$ so that for each $L-$quasi-geodesic standard paths $\gamma_1,\gamma_2$ without backtracking and with corresponding endpoints at $d_X$-distance at most $1$:
\begin{enumerate}
 \item If $\g_1$ contains a $\calP-$component of length at least $K$ then $\g_2$ contains a $\calP-$component tied to it.
 \item If $\g_1,\g_2$ contain, respectively, the $P-$components $e_{x,y}$ and $e_{x',y'}$, for some $P\in\calP$, then $d_X(x,x'),d_X(y,y')\leq K$.
\end{enumerate}
\end{defn}

The following is the metric analogue of the characterization of relatively hyperbolic groups that can be found in \cite{Fa}.

\begin{defn}
The pair $(X,\calP)$ satisfies (RH2) if each (some) coned-off graph $\hat{X}$ is hyperbolic and the BCP property holds for $\hat{X}$.
\end{defn}

Again, the equivalence of each/some, under a mild hypothesis on $\calP$, will be shown later, in Proposition \ref{rh2}.
\par
There is (at least) one more characterization of relatively hyperbolic groups that would be interesting to turn into a metric characterization, i.e. the one in terms of relative isoperimetric inequalities given in \cite{Os-rh}.
\par
The following definition is taken from \cite{Hr-relqconv}.

\begin{defn}
\label{trans}
Let $(X,d)$ be a geodesic metric space and $\calP$ a collection of subsets. When $\alpha$ is a path in $X$ we will denote by $deep_{\mu,c}(\alpha)$ the subset of $\alpha$ defined in the following way. If $x\in\alpha$ then $x\in deep_{\mu,c}(\alpha)$ if and only if there exists a subpath $\beta$ of $\alpha$ with endpoints $x_1,x_2$ in $N_{\mu}(P)$ for some $P\in\calP$ so that $x\in\beta$ and $d(x,x_i)>c$.
\par
The set of \emph{transient} points $trans_{\mu,c}(\alpha)$ of $\alpha$ is defined as $\alpha\backslash deep_{\mu,c}(\alpha)$.
\end{defn}

\begin{rem}
 We will later show that, roughly speaking, in a relatively hyperbolic space quasi-geodesics with the same endpoints have the same transient set, up to finite Hausdorff distance (Proposition \ref{transqgeod}).
\end{rem}

Recall that the collection $\calP$ of subsets of the geodesic metric space $X$ is said to satisfy $(\alpha_1)$ if for each $K$ there exists $B$ so that $diam(N_K(P)\cap N_K(Q))\leq B$ for each distinct $P,Q\in \calP$ (Definition \ref{alpha1}).

\begin{defn}
The pair $(X,\calP)$ satisfies (RH3) if there exist $\mu,R_0$ so that
\begin{enumerate}
 \item $(\alpha_1)$ holds;
 \item for each $k$ and $R\geq R_0$ there exists $K$ so that if $d(x,P),d(y,P)\leq k$ for some $P\in\calP$ and $d(x,y)\geq K$ then $trans_{\mu,R}([x,y])\subseteq B_K(x)\cup B_K(y)$ and there exists $z\in trans_{\mu,R}([x,y])\cap N_\mu(P)$;
 \item (\textbf{Relative Rips condition}) For each $R\geq R_0$ there exists $D$ so that if $\Delta=\gamma_0\cup\gamma_1\cup\gamma_2$ is a geodesic triangle then
$$trans_{\mu,R}(\gamma_0)\subseteq N_D(trans_{\mu,R}(\g_1)\cup trans_{\mu,R}(\gamma_2)).$$
\end{enumerate}
\end{defn}

The content of (2) is that when $x,y$ are close to a peripheral set then any transient point between $x,y$ is close to either $x$ or $y$, and also there is a transient point in a specified neighborhood of the peripheral set.

In Proposition \ref{chartrans} we will show that (RH3) is another characterization of relative hyperbolicity.

\section{Guessing geodesics}

We wish to generalise the following result, which is similar to a an earlier result due to Bowditch \cite{Bow-cchyp}. For a path $\alpha$ and $p,q\in\alpha$ we denote by $\alpha|_{[p,q]}$ the subpath of $\alpha$ joining $p$ to $q$.

\begin{prop}\cite[Proposition 3.5]{Ha-cchyp}
\label{guessgeodhyp}
 Let $X$ be a geodesic metric space. Suppose that for all $x,y\in X$ there is an arc $\eta(x,y)$ connecting them so that the following hold for some $D$.
\begin{enumerate}
 \item $diam(\eta(x,y))\leq D$ whenever $d(x,y)\leq 1$;
 \item for all $x',y'\in\eta(x,y)$ we have $d_{Haus}(\eta(x,y)|_{[x',y']},\eta(x',y'))\leq D$;
 \item $\eta(x,y)\subseteq N_D(\eta(x,z)\cup \eta(z,y))$ for all $x,y,z$.
\end{enumerate}
 Then $X$ is hyperbolic and there exists $\kappa$ so that $d_{Haus}(\eta(x,y),[x,y])\leq \kappa$ for all $x,y$.
\end{prop}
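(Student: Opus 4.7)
The plan is to establish both claims---hyperbolicity of $X$ and the Hausdorff closeness $d_{\mathrm{Haus}}(\eta(x,y),[x,y])\le\kappa$---simultaneously, by showing that the family $\{\eta(x,y)\}$ behaves like a uniform system of quasi-geodesics and then comparing them to actual geodesics. I would follow the template of Bowditch's original argument in \cite{Bow-cchyp}.

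First I would extract short-scale control: combining (1) with (2) yields that for any $p,q\in\eta(x,y)$ with $d(p,q)\le 1$, the subarc $\eta(x,y)|_{[p,q]}$ has diameter bounded by a constant $D_1=D_1(D)$, since $\eta(p,q)$ itself has diameter at most $D$ by (1) while (2) identifies it with $\eta(x,y)|_{[p,q]}$ up to Hausdorff error $D$. Next I would promote the thin-triangle condition (3) to a thin-polygon statement: for any finite sequence $x_0,x_1,\dots,x_n$, the path $\eta(x_0,x_n)$ lies in a uniform neighborhood of $\bigcup_i \eta(x_i,x_{i+1})$ with a constant independent of $n$. This is where the main technical work lies: a naive iteration of (3) produces a constant growing linearly in $n$, and even a dyadic subdivision only gives logarithmic growth, so one must marry the subdivision with the subpath idempotence (2) in order to absorb the accumulated error.

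Armed with polygon-thinness I would deduce $\eta(x,y)\subseteq N_{\kappa_0}([x,y])$ by discretising $[x,y]$ into unit-length arcs $x_0,\dots,x_n$, noting each $\eta(x_i,x_{i+1})$ has diameter $\le D$ by (1), and applying the polygon inequality to trap $\eta(x,y)$ in a uniform neighborhood of $[x,y]$. For the converse $[x,y]\subseteq N_{\kappa_0}(\eta(x,y))$, if some $p\in[x,y]$ were far from $\eta(x,y)$, I would split $[x,y]$ at $p$ into two geodesic halves, compare each to its $\eta$-path via (2) and the one-sided bound already established, and derive a contradiction with (1) near $p$. Once Hausdorff comparability is in hand, (3) transports to slimness of geodesic triangles in $X$, giving Rips-hyperbolicity, and Morse-type stability of quasi-geodesics in the hyperbolic $X$ refines the bound to the desired $\kappa$.

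The main obstacle is precisely the inductive step producing uniform polygon-thinness from (3): one cannot afford any growth of the constant in $n$, so (2) must be invoked at every node of a carefully organised recursion to prevent errors cascading. Once this is secured, the remaining steps---the two Hausdorff inclusions, the transfer of slimness, and the final Morse upgrade---are standard.
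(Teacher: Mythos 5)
You should first note that the paper does not actually prove Proposition \ref{guessgeodhyp}: it is quoted from \cite{Ha-cchyp}, and the only thing the paper adds is the remark that the cited proof establishes the Hausdorff-closeness conclusion first and deduces hyperbolicity from it. The closest internal analogue is the proof of Theorem \ref{guessgeod}, part $(b)$, whose Step (I) together with Lemma \ref{logest} is exactly the hyperbolic-space argument in generalized form, so that is the natural benchmark for your proposal.

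Measured against that, your outline has a genuine gap at its central step. You hang everything on ``uniform polygon-thinness'' with a constant independent of $n$, correctly flag it as the crux, but do not prove it: ``a carefully organised recursion invoking (2) at every node'' is not an argument, and no such direct recursion is known -- uniform polygon thinness is essentially equivalent to the conclusion you are trying to reach. The actual mechanism is different and you are missing it. One settles for the logarithmic bound: for any rectifiable path $\alpha$ joining $x$ to $y$, dyadic subdivision of $\alpha$ together with (1) and (3) gives $\eta(x,y)\subseteq N_{D\log_2 l(\alpha)+D}(\alpha)$ (this is Lemma \ref{logest} in the paper). One then runs a self-improvement argument: let $p\in\eta(x,y)$ realize the maximal distance $\xi$ to $[x,y]$, use (2) to localize to the subarc $\eta(x',y')$ where $x',y'$ lie on $\eta(x,y)$ at distance roughly $2\xi$ from $p$ on either side (so that $p$ remains $D$-close to $\eta(x',y')$ and the endpoints sit at distance comparable to $\xi$ from $[x,y]$), and join $x'$ to $y'$ by an auxiliary path of length linear in $\xi$ that descends to $[x,y]$, runs along it, and comes back up. Applying the logarithmic estimate to this short path yields $\xi\le D\log_2(C\xi)+C'$, which bounds $\xi$ uniformly. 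Without this localization-plus-logarithm step your inclusion $\eta(x,y)\subseteq N_{\kappa_0}([x,y])$ is not established, and everything downstream collapses. The remaining items in your outline -- the converse inclusion via connectedness of the arc and the small-scale bound from (1) and (2), and the transfer of (3) to slimness of geodesic triangles -- are fine and standard; the final ``Morse upgrade'' is superfluous once both inclusions are in hand.
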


The second part of the conclusion does not appear in the statement of \cite[Proposition 3.5]{Ha-cchyp}, but the proof strategy, as explained in the first paragraph of the proof, is to show the second part and then deduce the first one.

\begin{thm}
\label{guessgeod}
 Let $(X,d)$ be a geodesic metric space and $\calP$ a collection of subsets. Suppose that for each pair of points $x,y\in X$ a path $\eta(x,y)$ connecting them and a closed subset $trans(x,y)\subseteq \eta(x,y)$ have been assigned so that the following conditions hold for some large enough $D$.
\begin{enumerate}
 \item if $d(x,y)\leq 2$ then $diam(trans(x,y))\leq D$;
 \item for all $x',y'\in\eta(x,y)$, we have
$$d_{Haus}(trans(x',y'),trans(x,y)|_{[x',y']} \cup\{x',y'\})\leq D,$$
where $trans(x,y)|_{[x',y']}=trans(x,y)\cap\eta(x,y)|_{[x',y']}$;
 \item $trans(x,y)\subseteq N_D(trans(x,z)\cup trans(z,y))$ for all $x,y,z\in X$;
 \item if $x',y'\in\eta(x,y)$ do not both lie on $P\in\calP$ for any $P$, then there exists $z\in trans(x,y)$ between $x',y'$;
 \item $(\alpha_1)$;
\item for every $k$ there exists $K$ so that if $d(x,P),d(y,P)\leq k$ and $d(x,y)\geq K$ then $trans(x,y)\subseteq B_{K}(x)\cup B_{K}(y)$ and there exists $z\in trans(x,y)\cap N_D(P)$.
\end{enumerate}
Then
\begin{enumerate}[(a)]
 \item $X$ is hyperbolic relative to $\calP$;
 \item for every $C$ there exist $\mu,c_0,L$ with the following property. For any $c\geq c_0$ and $(1,C)-$almost-geodesic $\beta$ with endpoints $x,y$ the Hausdorff distance between $trans(x,y)$ and $trans_{\mu,c}(\beta)$ is at most some $L+c$.
\end{enumerate}
\end{thm}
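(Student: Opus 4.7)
My plan is to reduce to the classical hyperbolic Guessing Geodesics Lemma (Proposition \ref{guessgeodhyp}) applied to a lifted family of paths in a Bowditch space $Y=Bow(X,\{\Gamma_P\})$, and then invoke the equivalence (RH1) $\Leftrightarrow$ relative hyperbolicity (Proposition \ref{rh1}).

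\emph{Step 1: Lifting to $Y$.} Fix approximation graphs $\Gamma_P$ for each $P\in\calP$, so $Y=Bow(X,\{\Gamma_P\})$ is defined. For each pair $x,y\in X$, construct a path $\tilde\eta(x,y)\subset Y$ by traversing $\eta(x,y)$ and replacing every maximal subpath of $\eta(x,y)\setminus trans(x,y)$ by a detour into the appropriate horoball: using (4), $(\alpha_1)$, and (6), the endpoints $u,v$ of such a subpath lie in a uniform neighbourhood of a single $P\in\calP$ (unique up to bounded ambiguity), and we replace the subsegment by a vertical ascent into $\calH(\Gamma_P)$ to level $\lceil\log d_X(u,v)\rceil$, a short horizontal segment, and a vertical descent back to $v$. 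The transient portion is kept unchanged in $X\subseteq Y$, so $\tilde\eta(x,y)\cap X$ coincides with $trans(x,y)$ up to bounded error.

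\emph{Step 2: Applying Proposition \ref{guessgeodhyp}.} Verify the three hypotheses for the family $\{\tilde\eta(x,y)\}$ in $Y$. Conditions (1) and (2) of that lemma follow from our (1) and (2), since the lifting commutes with restriction to subpaths up to a bounded horoball correction. The slim-triangle condition is the crux. For $p\in\tilde\eta(x,y)$ lying on the $X$-part, hypothesis (3) directly supplies a close point on $trans(x,z)\cup trans(z,y)\subset\tilde\eta(x,z)\cup\tilde\eta(z,y)$. For $p$ deep in a horoball over $P$, an application of (4) to the $X$-foot of $p$, together with (6) and $(\alpha_1)$, forces one of $\tilde\eta(x,z),\tilde\eta(z,y)$ to detour into the same $\calH(\Gamma_P)$ at comparable depth, whereupon standard horoball geometry produces a $Y$-close point. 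Proposition \ref{guessgeodhyp} then yields that $Y$ is hyperbolic and $d_{Haus}^Y(\tilde\eta(x,y),[x,y]_Y)\leq\kappa$.

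\emph{Step 3: Conclusion and part (b).} By Proposition \ref{rh1}, hyperbolicity of $Bow(X)$ is equivalent to $X$ being hyperbolic relative to $\calP$, yielding (a); the mild coarse-connectedness needed is automatic from the approximation-graph construction. For (b), let $\beta$ be a $(1,C)$-almost-geodesic from $x$ to $y$ in $X$. Then $\beta$ is a uniform quasi-geodesic in $Y$, hence stays $(\kappa+Q)$-close to $\tilde\eta(x,y)$ in $Y$. Using the description of $Y$-geodesics as two vertical rays joined by a short horizontal segment (\cite[Lemma 3.10]{GrMa-perfill}, \cite[Lemma 2.2]{MS-qhyp}), one reads off that the subset of $\beta$ remaining at depth $<c$ in every horoball is exactly $trans_{\mu,c}(\beta)$ for a suitable $\mu$ (depending on $\calP$ and the $\Gamma_P$), and this set lies at Hausdorff distance at most $L+c$ from the $X$-portion of $\tilde\eta(x,y)$, which is $trans(x,y)$. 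The principal obstacle is the slim-triangle verification in Step 2 when $p$ is deep in a horoball: one must identify that two of the three lifts enter the same $\calH(\Gamma_P)$ at comparable depth, which requires simultaneously invoking (4) (to trap non-transient portions of $\eta$ in some $P$), $(\alpha_1)$ (to pin $P$ down), and (6) (to control entry/exit points). A subtlety for (b) is choosing $\mu$ so that ``deep in a $Y$-horoball'' matches ``in $N_\mu(P)$ deep inside $\beta$'', aligning the two notions of transient set.
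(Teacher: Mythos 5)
Your route is genuinely different from the paper's (which works intrinsically in $X$, generalizing the Bowditch/Hamenst\"adt argument via logarithmic divergence estimates for $trans(x,y)$ and then verifying $(*)$-asymptotic tree-gradedness directly), but as written it has a fatal circularity. In Step 3 you deduce part (a) from ``$Bow(X)$ hyperbolic $\Rightarrow$ $X$ hyperbolic relative to $\calP$'' by citing Proposition \ref{rh1}. In this paper that implication is \emph{proved by applying Theorem \ref{guessgeod}}: the $\Leftarrow$ direction of Proposition \ref{rh1} takes geodesics in $Bow(X)$, projects them to paths in $X$, and feeds them into the very theorem you are trying to prove. For general metric spaces (as opposed to groups, where Bowditch/Groves--Manning/Osin apply) there is no independent reference for this implication, so you would have to supply it yourself --- and doing so amounts to reproving the theorem. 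A second structural problem: the Bowditch space requires connected approximation graphs, hence uniformly coarsely connected peripherals; Theorem \ref{guessgeod} does not assume this (the paper explicitly notes coarse connectedness is not needed for (RH0)/(RH3)), and your claim that it is ``automatic from the approximation-graph construction'' is false --- nothing in hypotheses (1)--(6) forces the sets $P\in\calP$ to be coarsely connected. So your argument, even if completed, would prove a strictly weaker statement.

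Beyond that, the two places you flag as ``the crux'' and ``the principal obstacle'' are exactly where the paper's real work lives, and you have not done it. Verifying the slim-triangle condition for a point $p$ deep in a horoball requires knowing that the entry/exit points of $\eta(x,z)\cup\eta(z,y)$ into $N_\mu(P)$ nearly match those of $\eta(x,y)$ (hypothesis (3) only controls transient sets, not the deep excursions), and Proposition \ref{guessgeodhyp} moreover demands paths $\tilde\eta(u,v)$ for \emph{all} pairs $u,v\in Y$, including points inside horoballs, which you never define. Likewise, matching ``depth $<c$ in a horoball'' with $trans_{\mu,c}(\beta)$ for an almost-geodesic $\beta$ requires the quasiconvexity of $N_\mu(P)$ along $\beta$ and control of entrance points --- this is precisely Steps (Q-C), (II), (III) of the paper's proof and does not come for free from the horoball geodesic description. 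In short: the strategy could perhaps be salvaged under the extra coarse-connectedness hypothesis and with an independent proof of the Bowditch-space implication, but in its present form the key implications are either circular or deferred.
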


The fact that transient sets in relatively hyperbolic spaces satisfy the properties listed above is the content of Lemma \ref{propapplies}.

We will first show part $(b)$. Given part $(b)$, we will be able to show a new characterization of relative hyperbolicity. Using this characterization, part $(a)$ follows immediately.

\begin{proof}[Proof of part $(b)$]
Let us start with some general remarks. First of all, for each $n-$gon $\eta(x_1,x_2)\cup\dots\cup\eta(x_n,x_1)$, with $n\geq 3$, we have
$$trans_{\mu,R}(\eta(x_1,x_2))\subseteq N_{(n-2)D}(trans(x_2,x_3)\cup \dots\cup trans(x_n,x_1)).$$
In particular, subdividing a geodesic from $x$ to $y$ and using $1$, we get
$$diam(trans(x,y))\leq D\max\{(d(x,y)/2-1,0\}+D\leq D d(x,y)+D.$$
\par
Notice that by $2)$ with $x'=x,y'=y$ for each $x,y$ there is $p\in trans(x,y)$ with $d(p,x)\leq D$. For convenience we will always assume $x,y\in trans(x,y)$.
\begin{lemma}
Up to increasing $D$, we can substitute $6)$ with the following:
\par\medskip
$6')$ There exists $M\geq 1$ so that for every $k\geq 1$ if $d(x,P),d(y,P)\leq k$ and $d(x,y)\geq Mk$ then $trans(x,y)\subseteq B_{Mk}(x)\cup B_{Mk}(y)$ and there exists $z\in trans(x,y)\cap N_D(P)$. 
\end{lemma}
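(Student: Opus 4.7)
The plan is to propagate property (6) from the base scale $k = 1$ to all $k \geq 1$ using the polygon form of property (3) together with the diameter bound $\mathrm{diam}(trans(x', y')) \leq D\,d(x', y') + D$ noted in the general remarks above. Fix $K_0 := K(1)$. Given $k \geq 1$ and $x, y$ with $d(x, P), d(y, P) \leq k$ and $d(x, y) \geq Mk$ (with $M$ to be chosen), pick $p, q \in P$ with $d(x, p), d(y, q) \leq k$; then $d(p, q) \geq (M - 2)k \geq K_0$ once $M \geq K_0 + 2$. Applying (6) at scale $1$ to $(p, q)$ produces $trans(p, q) \subseteq B_{K_0}(p) \cup B_{K_0}(q)$ together with a point $z_0 \in trans(p, q) \cap N_D(P)$.

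For the first half of (6'), the polygon inclusion applied to the quadrilateral with sides $\eta(x, p), \eta(p, q), \eta(q, y), \eta(y, x)$ reads
\[
trans(x, y) \subseteq N_{2D}\bigl( trans(x, p) \cup trans(p, q) \cup trans(q, y)\bigr).
\]
Plugging in the diameter-based bounds $trans(x, p) \subseteq B_{Dk + D}(x)$, $trans(q, y) \subseteq B_{Dk + D}(y)$, and the location $trans(p, q) \subseteq B_{K_0 + k}(x) \cup B_{K_0 + k}(y)$, every resulting radius is $\leq M_1 k$ for $M_1 := \max(4D,\; 1 + K_0 + 2D)$ (using $k \geq 1$), so $trans(x, y) \subseteq B_{M_1 k}(x) \cup B_{M_1 k}(y)$. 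Taking $M \geq \max(M_1, K_0 + 2)$ secures this half.

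The main obstacle is the second half, exhibiting a point $z \in trans(x, y) \cap N_{D'}(P)$ for a universal $D'$. I would pull $z_0$ back through the reverse polygon inclusion $trans(p, q) \subseteq N_{2D}(trans(p, x) \cup trans(x, y) \cup trans(q, y))$: if it lands $2D$-close to $trans(x, y)$ we conclude with $D' = 3D$. The trouble is when it lands close to $trans(p, x) \cup trans(q, y)$; iterating through the auxiliary triangles $\{p, x, y\}$ and $\{p, q, y\}$ can in principle cycle $trans(p, q) \to trans(p, x) \to trans(p, y) \to trans(p, q)$. The key observation closing this gap is that every return to $trans(p, q)$ produces a point within $O(D)$ of $z_0$, forcing it into $B_{K_0}(p) \cup B_{K_0}(q)$: landing in $B_{K_0}(q)$ forces $d(p, q) \leq 2K_0 + O(D)$ by the triangle inequality, which against $d(p, q) \geq (M - 2)k$ caps $k$ by a constant depending only on $K_0$ and $D$; landing in $B_{K_0}(p)$ leads, after one further cycle tracking the accumulated displacement, either to the same cap or to a transient witness in $\eta(x, y)$. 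Taking $M$ large enough makes this cap $k \leq 1$, so $x \in N_1(P) \subseteq N_D(P)$ is itself a valid transient witness, completing (6') with $D'$ equal to the maximum constant produced by the case analysis.
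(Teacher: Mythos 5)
Your first half (the containment $trans(x,y)\subseteq B_{Mk}(x)\cup B_{Mk}(y)$) is correct and is essentially the paper's argument: a quadrilateral on $x$, a near-closest point in $P$, a near-closest point to $y$ in $P$, and $y$, combined with the polygon form of $3)$, the linear diameter bound, and $6)$ applied to points on $P$ itself.

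The second half has a genuine gap, and you have correctly located where it is but not closed it. The obstruction is structural, not just a matter of bookkeeping the iteration: your witness $z_0\in trans(p,q)\cap N_D(P)$ lies in $B_{K_0}(p)\cup B_{K_0}(q)$, say $B_{K_0}(p)$, while $trans(x,p)$ itself contains a point within $D$ of $p$ (this is the consequence of $2)$ with $x'=x$, $y'=y$ noted in the general remarks, and indeed the convention $p\in trans(x,p)$ is in force). So $z_0$ is \emph{always} within $K_0+D$ of $trans(x,p)$, the reverse polygon inclusion never forces $z_0$ toward $trans(x,y)$, and the cycle $trans(p,q)\to trans(p,x)\to trans(p,y)\to trans(p,q)$ you describe can recur indefinitely with no contradiction and no cap on $k$: every set in the cycle has points near $p$. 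The paper's fix is to prevent the vertical sides from reaching $P$ at all: it takes $x'',y''$ to be the \emph{first} points of $\eta(x,x')$, $\eta(y,y')$ at distance $4D+1$ from $P$ and applies $6)$ to the pair $x'',y''$ (with $k=4D+1$) to get $z\in trans(x'',y'')\cap N_D(P)\cap B_K(x'')$. Then condition $2)$ forces $trans(x,x'')$ to stay at distance $>3D$ from $P$, so $z$ cannot be $2D$-close to it; $z$ is also far from $trans(y,y'')$ since $z$ stays near the $x$-end while $trans(y,y'')$ stays near the $y$-end and $d(x,y)\geq Mk$. The quadrilateral inclusion then leaves only the possibility that $z$ is $2D$-close to $trans(x,y)$, giving the witness in $N_{3D}(P)$. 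Some version of this truncation (or another device keeping the auxiliary transient sets away from $P$) is needed; as written, your argument does not establish the second half of $6')$.
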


\begin{proof}
Fix the notation of $6')$, where $M$ is large enough to carry out the following argument. Consider $x',y'\in P$ so that $d(x,x')\leq d(x,P)+1,d(y,y')\leq d(y,P)+1$. Fix $K$ as in $6)$ for $k=4D+1$. Then by $6)$ with $k=0$ we have that $trans(x',y')$ is contained in the $K-$neighborhood of $\{x',y'\}$, so that keeping into account $diam(trans(x,x'))\leq D d(x,x')+D$ and the same estimate for $y,y'$ we have
$$A=trans(x,x')\cup trans(x',y') \cup trans(y',y)\subseteq N_{Dk+K+2D}(\{x,y\}),$$
and we get $trans(x,y)\subseteq N_{Mk}(\{x,y\})$ as $trans(x,y)\subseteq N_{2D}(A)$.
The second part is obvious for $d(x,P)\leq 4D+1$ or $d(y,P)\leq 4D+1$ (as we allow an increase of $D$), so assume that this is not the case. Consider the first points $x'',y''$ in $\eta(x,x'),\eta(y,y')$ at distance $4D+1$ from $P$. By $6)$ there exists $z\in trans(x'',y'')\cap N_{D}(P)\cap B_{K}(x'')$. Such $z$ cannot be $2D-$close to $trans(x,x'')$ (keeping $2)$ into account) and $trans(y,y'')$ (as we can bound $d(x,z)$ linearly in $d(x,P)$). Hence, it is $2D-$close to $trans(x,y)$, so that $trans(x,y)$ intersects $N_{3D}(P)$. We are done up to increasing $D$ to $4D+1$.
\end{proof}

\begin{lemma}
\label{logest}
 Let $\alpha$ be any rectifiable path connecting, say, $x$ to $y$. Then $trans(x,y)\subseteq N_{f(\alpha)}(\alpha)$, where $f(\alpha)=D\log_2 \max\{l(\alpha),1\}+D$.
\end{lemma}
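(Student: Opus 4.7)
The proof is by strong induction on $L := l(\alpha)$, via the standard Bowditch-style halving trick, using only properties $(1)$ and $(3)$ from the hypothesis list of Theorem \ref{guessgeod}.

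For the base case $L \leq 2$, we have $d(x,y) \leq L \leq 2$, so $(1)$ gives $diam(trans(x,y)) \leq D$; since we have assumed $x \in trans(x,y)\cap \alpha$, this forces $trans(x,y) \subseteq N_D(x) \subseteq N_D(\alpha)$, which is contained in $N_{f(\alpha)}(\alpha)$ because $f(\alpha) \geq D$ for all $L \geq 0$.

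For the inductive step $L > 2$, parameterize $\alpha$ by arc length, let $z$ be its midpoint, and write $\alpha = \alpha_1 \cup \alpha_2$ with $l(\alpha_i) = L/2 < L$ and $\alpha_1$ joining $x$ to $z$, $\alpha_2$ joining $z$ to $y$. Applying property $(3)$ to the triple $x,z,y$ and then the inductive hypothesis to each half yields
\[
trans(x,y) \subseteq N_D\bigl(trans(x,z)\cup trans(z,y)\bigr) \subseteq N_{D + f(L/2)}(\alpha_1 \cup \alpha_2).
\]
The recursion closes because $D + f(L/2) = D + (D\log_2(L/2) + D) = D\log_2 L + D = f(L)$ whenever $L/2 \geq 1$, which is the case here since $L > 2$.

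There is no real obstacle: rectifiability is exactly what is needed to split $\alpha$ at an arc-length midpoint, and the constants match by construction. The only mild care required is to take the base case at $L \leq 2$ rather than at a shorter length, so that property $(1)$ is triggered by the bound $d(x,y) \leq 2$ coming from $l(\alpha) \leq 2$.
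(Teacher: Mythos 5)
Your proof is correct and is essentially identical to the paper's own argument: the same base case via condition $(1)$ for $l(\alpha)\leq 2$, the same split at the arc-length midpoint, and the same application of condition $(3)$ followed by the dyadic recursion $D+f(L/2)=f(L)$. Your spelling out of the base case (using $diam(trans(x,y))\leq D$ together with $x\in trans(x,y)$) is a slightly more careful version of what the paper leaves implicit.
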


\begin{proof}
Indeed, this holds if $\alpha$ has length at most $2$ by condition $1)$. Suppose $l(\alpha)\geq 2$. Let $\alpha_1,\alpha_2$ be subpaths of $\alpha$ of equal length and let $z$ be the common endpoint. Then, by $3)$, $trans(x,y)\subseteq N_D(trans(x,z)\cup trans(z,y))$. On the other hand, we can assume by induction that $trans(x,z),trans(z,y)$ are both contained in the $(D\log_2 (l(\alpha)/2)+D)-$neighborhood of $\alpha$. So, $trans(x,y)$ is contained in the $(D\log_2 (l(\alpha)/2)+2D)-$neighborhood of $\alpha$, and we are done as
$$D\log_2 (l(\alpha)/2)+2D= D\log_2 l(\alpha)+D.$$
\end{proof}

We will also need ``logarithmic quasi-convexity'', i.e. the following estimate.

\begin{lemma}
\label{logqconv}
 Suppose that $\alpha$ is a $(1,C)$-almost geodesic joining $x$ to $y$. Denote
$$\xi(\alpha)=\sup\{d(p,\alpha|_{[x',y']})| x',y'\in\alpha, p\in trans(x',y')\}.$$
Then for every $P\in\calP$ we have $\alpha\subseteq N_{L}(P)$, where $L=\max\{Md(x,P)+M+3C,Md(y,P)+M+3C, \xi(\alpha)+D\}$ and $M$ is as in $6')$.
\end{lemma}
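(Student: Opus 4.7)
The plan is a proof by contradiction. Suppose some $q = \alpha(t_0) \in \alpha$ satisfies $d(q,P) > L$; I will combine condition $6')$ with the definition of $\xi(\alpha)$ to derive a contradiction. Set $k = \max\{d(x,P), d(y,P), 1\}$, so that $L > k$ (each term in the max defining $L$ dominates $k$, using $M \geq 2$ which we assume WLOG).

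First I select auxiliary entry and exit points on $\alpha$ near $P$. Let $x_0 \in \alpha|_{[x,q]}$ and $y_0 \in \alpha|_{[q,y]}$ be points minimizing distance to $P$ on their respective subarcs, with arclength parameters $t_1 < t_0 < t_2$; then $d(x_0,P), d(y_0,P) \leq k$. By the triangle inequality, $d(q,x_0)$ and $d(q,y_0)$ both exceed $L-k$, so the arclength $t_2 - t_1$ of $\alpha|_{[x_0,y_0]}$ is at least $2(L-k)$, and the $(1,C)$-almost-geodesic property gives $d(x_0,y_0) \geq 2(L-k) - C$. The choice $L \geq Mk + M + 3C$ makes this at least $Mk$, so $6')$ applies to the pair $(x_0,y_0)$ and provides $z \in trans(x_0,y_0) \cap N_D(P)$ with $z \in B_{Mk}(x_0) \cup B_{Mk}(y_0)$; WLOG $z \in B_{Mk}(x_0)$.

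Next I use the definition of $\xi(\alpha)$ applied to the pair $x_0, y_0 \in \alpha$: there is $\alpha(s) \in \alpha|_{[x_0,y_0]}$ with $d(\alpha(s),z) \leq \xi(\alpha)$, hence $d(\alpha(s),x_0) \leq Mk + \xi$ and $d(\alpha(s),P) \leq \xi + D$. I case-analyze on the position of $s$ relative to $t_0$. If $s \geq t_0$, the $(1,C)$-almost-geodesic property converts $d(\alpha(s),x_0) \leq Mk + \xi$ into the arclength estimate $s - t_1 \leq Mk + \xi + C$, which combined with $t_0 - t_1 \geq d(q,x_0) \geq L - k$ forces $s - t_0$ small enough that $d(q,\alpha(s)) + d(\alpha(s),P) \leq L$, contradicting $d(q,P) > L$. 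If $s \leq t_0$, the minimality of $x_0$ on $\alpha|_{[x,q]}$ forces $d(x_0,P) \leq d(\alpha(s),P) \leq \xi + D$; one then iterates the argument with $x_0$ in place of $x$ (and similarly $y_0$ in place of $y$ for the symmetric case where $z \in B_{Mk}(y_0)$). The iteration terminates because each round either directly produces the contradiction or strictly lowers the effective $d(x,P)$ entering the setup, driving it down to at most $\xi + D$, at which point the first case must apply.

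The main obstacle will be tracking the additive constants through this case analysis and iteration so that the bound arising from the $s \geq t_0$ case matches precisely $\max\{Md(x,P) + M + 3C, Md(y,P) + M + 3C, \xi + D\}$ rather than a slightly weaker variant; this uses the freedom to enlarge $D$ (and $M$) a priori in $6')$ to absorb the distortion introduced by passing between distance and arclength on $\alpha$.
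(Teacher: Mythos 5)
There is a genuine gap here, and it is not of the kind that can be absorbed by enlarging $M$ and $D$ a priori. In your case $s\geq t_0$ the arithmetic does not close. From $d(\alpha(s),z)\leq\xi$ and $d(z,x_0)\leq Mk$ you get $s-t_1\leq Mk+\xi+C$, and combining with $t_0-t_1\geq L-k$ gives $d(q,\alpha(s))\leq s-t_0\leq Mk+k+\xi+C-L$, whence
$$d(q,P)\leq d(q,\alpha(s))+d(\alpha(s),P)\leq Mk+k+2\xi+C+D-L.$$
To contradict $d(q,P)>L$ you need $2L\geq Mk+k+2\xi+C+D$. But $L=\max\{Mk+M+3C,\xi+D\}$ contributes at most one copy of $\xi$ to $2L$ (namely $2L\geq (Mk+M+3C)+(\xi+D)$), so the requirement reduces to $M+2C\geq k+\xi$, which fails as soon as $k$ or $\xi(\alpha)$ is large --- and both are unbounded (in Step (I) of the main theorem the lemma is applied with $k$ comparable to $\xi$, and $\xi$ is only controlled logarithmically in $l(\alpha)$). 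The failure is structural rather than a matter of bookkeeping: you estimate $d(q,P)$ by chaining $d(q,\alpha(s))$ with $d(\alpha(s),P)$, and each of those two legs already costs a full $\xi$, so this route can only yield $L\approx 2\xi$, not $\xi+D$. The case $s\leq t_0$ has a separate gap: the iteration replaces $d(x_0,P)$ by something $\leq\xi+D$, which is no improvement at all when $\xi+D\geq k$ (and then the quantity $M(\xi+D)+M+3C$ entering the next round already exceeds the target $L$); moreover ``strictly lowers'' does not by itself guarantee termination, and nothing in the setup forces the first case to apply after finitely many rounds.

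The paper's argument is organized precisely so that $\xi$ is paid only once. It does not attempt to show that the bad point is close to $P$; instead it passes to the maximal subarc $\beta=\alpha|_{[x',y']}$ containing the bad point whose interior lies outside $N_k(P)$, invokes $6')$ to produce the distinguished point $z\in trans(x',y')\cap N_D(P)$, and derives the contradiction from the definition of $\xi(\alpha)$ itself, by arguing that $z$ would have to be farther than $\xi(\alpha)$ from $\beta$. Only one $\xi$ enters that inequality, which is why the term $\xi+D$ (rather than $2\xi$) can appear in $L$. To repair your proof you should switch to this form of the contradiction --- show that the transient point supplied by $6')$ cannot be $\xi$-close to the relevant subarc of $\alpha$ (which requires choosing the subarc so that all of it, endpoints included, is kept away from $N_D(P)$) --- rather than propagating closeness to $P$ back to the point $q$.
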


Notice that $\xi(\alpha)$ is bounded logarithmically in $l(\alpha)$ by Lemma \ref{logest}.

\begin{proof}
Set $k=\max\{d(x,P),d(y,P)\}$.
Let $\beta=\alpha|_{[x',y']}$ be so that $\beta \cap N_{k}(P)=\{x',y'\}$. Suppose by contradiction that there exists $p\in\beta$ with $d(p,P)>L$. In particular, $d(x',y')\geq 2(L-k)-3C\geq Mk$. Let $z\in trans(x',y')$ be as in $6')$. Clearly, $d(z,\beta)\geq d(p,P)-D> \xi(\alpha)$, a contradiction.
\end{proof}

We are now ready for the core of the proof and we will proceed as follows. We first show (I) that any point in $trans(x,y)$ is close to any $(1,C)$-almost-geodesic $\beta$ connecting $x$ to $y$. Then we show (II) that points in $trans_{\mu,c'}(\beta)$, for suitable $\mu,c'$, are close to $trans(x,y)$, and in order to do so we will actually show that if $p\in\beta$ is far from $trans(x,y)$ then $p\in deep_{\mu,c'}(\beta)$. Finally, we show (III) that points in $trans(x,y)$ are close to $trans_{\mu,c_0}(\beta)$ for some $c_0\geq c'$, but we will actually show that points in $deep_{\mu,c_0}(\beta)$ are far from $trans(x,y)$. Furthermore, $\mu,c'$ will be chosen so that each $deep_{\mu,c'}(\beta)$ is a disjoint union of subpaths each contained in a large subpath with endpoints in $N_\mu(P)$ for some $P\in\calP$. It is easily deduced then that $d_{Haus}(trans_{\mu,c}(\beta),trans_{\mu,c'}(\beta))\leq c-c'$ for all $c\geq c'$ (Q-C).
\par\medskip
Let now $\beta$ be a $(1,C)-$almost-geodesic connecting $x$ to $y$.
\par
{\bf Step (I).} For the purposes of this step, we can assume that $\beta$ is $C-$Lipschitz, up to increasing the constant we find with the following argument. In fact, any $(1,C)-$quasi-geodesic lies within Hausdorff distance $C'$ from a $C'-$Lipschitz $(1,C')-$quasi-geodesic, where $C'=C'(C)$. Let $p\in trans(x,y)$ be the point at maximal distance from $\beta$. We want to give a bound on $d(p,\beta)$. Let $q$ be a closest point to $p$ in $\beta$ and set $d(p,q)=\xi$. We can assume $\xi\geq 4D+1$ and $l(\beta)\geq 1$, for otherwise we are done. Also, up to substituting $\beta$ with a suitable subpath, we can assume $\xi\geq \xi(\beta)-1$, as defined in Lemma \ref{logqconv}.
\begin{figure}[h]
\centering
\includegraphics[scale=1]{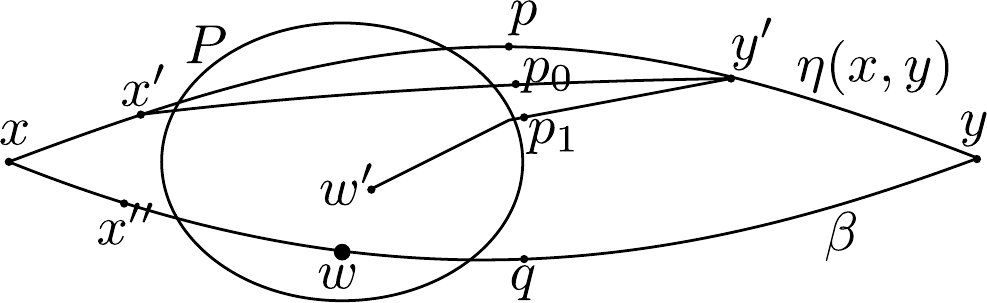}
\end{figure}

Consider the closest $x'\in trans(x,y)$ before $p$ so that $d(x',p)\geq \min\{2\xi,d(p,x)\}$, and define $y'$ similarly. By $2)$ we have $d(p,p_0)\leq D$ for some $p_0\in trans(x',y')$. The proof in the hyperbolic setting (in the case $x'\neq x, y'\neq y$) relies on $d(x',p),d(y',p)=2\xi$, but a linear upper bound in $\xi$ would make the proof work as well.
So, we would like to reduce to the case when $x'$ is either $x$ or is not too far from $p_0$ (and similarly for $y'$) by substituting $x'$ with some $w'$ if necessary. Set $w'=x'$ and $p'=p_0$ if $d(x',p)< M_0 \xi$, where $M_0=M_0(D,M,C)$ is large enough to allow us to carry out the following argument. If $d(x',p)$ is large, by $4)$ we have that $x',p$ lie in $N_{2\xi}(P)$ for some $P\in\calP$. Let $x''\in\beta$ be so that $d(x',x'')\leq \xi$. By Lemma \ref{logqconv}, $\beta|_{[x'',q]}\subseteq N_{L}(P)$, where $L\leq 3M\xi+M+D+3C$. We can choose $w\in\beta|_{[x'',q]}$, $w'\in P$ so that $d(w,w')\leq L+1$, $M+D< d(w',p_0)\leq M_0 L$. From $6')$ we have $d(p_0,trans(w',x'))>D$, so that by $3)$ (with $x',w',y')$) we have $d(p_0,trans(w',y'))\leq D$.
Fix $p_1\in trans(w',y')$ with $d(p_0,p_1)\leq D$.
Up to running the same argument on a final subpath of $\eta(w',y')$ in case $d(y',p)$ is large, we get a point $p_2$ (possibly $p_1=p_2$) with $d(p_1,p_2)\leq D$ and a point $z'$ so that $p_2\in trans(w',z')$ and $d(z',p_2),d(w',p_2)\leq M_0\xi+4D$, $d(z',\beta),d(w',\beta)\leq L+1$, but either $d(w',p_2)\geq 2\xi-4D$ or $w'=x$, and similarly for $z'$.
Consider now a path $\alpha$ obtained concatenating in the suitable order a geodesic of length at most $L+1$ from $w'$ to $a\in\beta$, a geodesic of length at most $L+1$ from $b\in\beta$ to $y'$ and $\beta|_{[a,b]}$ (choose $a=x$ and/or $b=y$ if $x'=x$ and/or $y'=y$). The length of $\alpha$ is at most $A=2L+2+(2L+2+2M_0\xi+8D+C)C$, which, upon fixing $M,D,C,M_0$, can be bounded linearly in $\xi$. Notice that $\xi=d(p,\beta)\leq d(p_2,\alpha)+4D\leq D\log_2(A)+4D$, by Lemma \ref{logest}. This gives a bound $\mu$ for $\xi$ in terms of $D,M, C$ (recall that $M_0$ depends on $D,M,C$).
\par
{\bf Step (Q-C).} We now show that $(1,C)-$almost-geodesics with endpoints in $N_{\mu}(P)$ for some $P\in\calP$ are contained in $N_{\mu'}(P)$, where $\mu'$ depends on $C$ (but not on $P$), a fact that will be needed later. As a consequence of this and $5)$, for each $c'$ large enough $deep_{\mu,c'}(\beta)$ is a disjoint union of subpaths each contained in a large subpath with endpoints in $N_\mu(P)$ for some $P\in\calP$, as subpaths of $\beta$ with endpoints in $N_\mu(P)$ and $N_{\mu}(P')$ for $P,P'\in\calP$ with $P\neq P'$ have controlled intersection. Suppose that an almost-geodesic $\alpha$ as above contains a subpath $\alpha'$ outside $N_{\mu+D+1}(P)$ with endpoints $x,y$. Then, if by contradiction $d(x,y)$ is larger than some suitable $R$, by $6)$ there exists $z\in trans(x,y)\cap N_D(P)$. But such point cannot be $\mu-$close to $\alpha$. So, $d(x,y)\leq R$ and hence $\alpha'\subseteq N_{\mu'=\mu+D+R+C+1}(P)$, which in turn gives $\alpha\subseteq N_{\mu'}(P)$.
\par
{\bf Step (II).} Fix $c'$ as determined in (Q-C). Let now $p\in\beta$ be at distance at least $\max\{c'+\mu,2\mu+C\}+1$ from $trans(x,y)$. We wish to show that $p\in deep_{\mu,c'}(\beta)$.
Let $p_1,p_2$ be the closest points on the sides of $p$ in $\beta$ at distance at most $\mu$ from $trans(x,y)$. Clearly, $d(p_1,p_2)\geq \max\{2c',2\mu+2C\}+1$.
Let $q_1$ be the last point on $trans(x,y)$ so that $d(q_1,\beta_1)\leq \mu$, where $\beta_1$ is the sub-quasi-geodesic of $\beta$ with final point $p_1$. Define $q_2$ and $\beta_2$ similarly. Notice that there are no points in $trans(x,y)$ between $q_1$ and $q_2$, as any point in $trans(x,y)$ is $\mu-$close to either a point in $\beta_1$ or a point in $\beta_2$ and $d(\beta_1,\beta_2)\geq d(p_1,p_2)-2C>2\mu$. In particular, $q_1,q_2$ both lie on some $P\in\calP$ by $4)$, which implies (considering $\beta|_{[p_1,p_2]}$) that $p$ is $(\mu,c')$-deep, as required.
\par
{\bf Step (III).} Finally, we have to show that if $p\in deep_{\mu,B}(\beta)$, where $B$ will be determined later, then $d(p,trans(x,y))> \mu$. Let $p_1,p_2$ be the endpoints of the subpath of $\beta$ contained in the neighborhood of some $P\in\calP$ as in the definition of $deep_{\mu,B}(\beta)$ (in particular, $d(p_1,p_2)\geq 2B$). Notice that $p_1,p_2\in trans_{\mu,B}(\beta)$ if $B\geq diam(N_{\mu'}(P)\cap N_{\mu'}(P'))$ for any $P,P'\in\calP$ with $P\neq P'$ and for $k'$ as in (Q-C). We have, by Step (II), $d(p_i,q_i)\leq \max\{c'+\mu,2\mu+C\}+1=A$ for some $q_i\in trans(x,y)$. In particular, $d(q_i,P)\leq A+\mu$ so that for $B$ large enough we have, by $6)$ and $2)$,
$$trans(x,y)\subseteq N_{K+D}(\eta|_{[x,q_1]}\cup\eta|_{[q_2,y]}),$$
for $K$ as in $6)$ with $k=A+\mu$.
This easily implies the claim, up to further increasing $B$, because for $B$ large enough
$$d(x,p)\notin [0,d(x,q_1)+\mu+K+D]\cup [d(x,q_2)-\mu-K-D,d(x,y)]$$
as $\beta$ is a $(1,C)-$quasi-geodesic.
\par
This concludes the proof of part $(b)$.
\end{proof}

As mentioned earlier, we now use $(b)$ to give the characterization of relative hyperbolicity (RH3) below. In the setting of Theorem \ref{guessgeod}, the conditions stated in (RH3) are readily checked in view of $(b)$, as they basically are some of the condition in Theorem \ref{guessgeod} with $trans_{\mu,R}$ substituting $trans$. (Substituting $trans$ with $trans_{\mu,R}$ allows us to drop some of the ``coherence'' conditions we had to require in Theorem \ref{guessgeod}.) In particular, the implication $\Leftarrow$ below concludes the proof of Theorem \ref{guessgeod}.

\begin{prop}[Characterization (RH3)]
\label{chartrans}
The geodesic metric space $X$ is hyperbolic relative to $\calP$ $\iff$ there exist $\mu,R_0$ so that
\begin{enumerate}
 \item $(\alpha_1)$;
 \item for each $k$ and $R\geq R_0$ there exists $K$ so that if $d(x,P)+d(y,P)\leq k$ for some $P\in\calP$ and $d(x,y)\geq K$ then $trans_{\mu,R}([x,y])\subseteq B_K(x)\cup B_K(y)$ and there exists $z\in trans_{\mu,R}([x,y])\cap N_\mu(P)$;
 \item for each $R\geq R_0$ there exists $D$ so that if $\Delta=\gamma_0\cup\gamma_1\cup\gamma_2$ is a geodesic triangle then
$$trans_{\mu,R}(\gamma_0)\subseteq N_D(trans_{\mu,R}(\g_1)\cup trans_{\mu,R}(\gamma_2)).$$
\end{enumerate}
\end{prop}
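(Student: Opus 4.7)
The proof splits into two implications.

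For $(\Rightarrow)$, assume $X$ is hyperbolic relative to $\calP$. Condition (1), namely $(\alpha_1)$, is part of the definition. For (2): if $d(x,P)+d(y,P)\le k$ and $d(x,y)\ge K$ with $K=K(k,\mu)$ sufficiently large, the closest-point projection theory for peripheral sets (\cite{DS-treegr, Si-proj}) forces $[x,y]$ to enter $N_\mu(P)$ within bounded distance of $x$ and to exit within bounded distance of $y$, staying inside $N_\mu(P)$ in between; hence the interior of $[x,y]$ is $(\mu,R)$-deep, the transient set is pinned to $B_K(x)\cup B_K(y)$, and the first point of $[x,y]$ in $N_\mu(P)$ furnishes a transient point there. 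Condition (3), the relative Rips condition for geodesic triangles, is a standard feature of relatively hyperbolic spaces (see \cite{Hr-relqconv}, or derive it by translating thin-triangle estimates on the Bowditch space).

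For $(\Leftarrow)$, assume (RH3). The plan is to verify the hypotheses of Theorem \ref{guessgeod} with
\[
\eta(x,y):=[x,y], \qquad trans(x,y):=trans_{\mu,R}([x,y]),
\]
for a fixed family of geodesics $[x,y]$ in $X$ and $R\ge R_0$ large, and then apply Theorem \ref{guessgeod}(a). Hypotheses (1), (3), (5), (6) read off directly from (RH3): (1) is trivial, (3) is (RH3)(3) applied to the geodesic triangle with vertices $x,z,y$, (5) is (RH3)(1), and (6) is (RH3)(2). The real work is in (2) and (4). For (4), if $x',y'\in[x,y]$ do not both lie on any single $P\in\calP$, then $[x',y']$ cannot be entirely deep with a shared peripheral witness; combining $(\alpha_1)$ with (RH3)(2) at any transition between distinct witnesses produces an interior transient point. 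For (2), comparing $trans_{\mu,R}([x',y'])$ with $trans_{\mu,R}([x,y])|_{[x',y']}\cup\{x',y'\}$, a point deep in $[x,y]$ loses its deepness on passing to $[x',y']$ only if its witnessing excursion is truncated at $x'$ or $y'$, in which case $x'$ (resp.\ $y'$) lies close to the relevant $P$; (RH3)(2) then quantifies how far formerly-deep points can be from the truncation endpoint before recovering as transient, giving the required Hausdorff bound.

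\textbf{Main obstacle.} The hardest step will be condition (2), the restriction coherence of $trans_{\mu,R}$, which has to be established under (RH3) alone with no a priori projection theory on $\calP$. The approach is to iterate (RH3)(2) locally at the truncation endpoints: whenever a witnessing excursion $[x_1,x_2]$ is cut at $x'$, the distance from $x'$ to $P$ is controlled, so (RH3)(2) certifies transient witnesses in a bounded neighborhood, converting pointwise loss of deepness into a uniform Hausdorff bound depending only on $\mu, R, D$.
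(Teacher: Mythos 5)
Your forward direction is broadly consistent with the paper (which in fact defers it to Proposition \ref{rh0}, deriving (RH3) from the weaker hypothesis (RH0) via the projection lemmas), and your plan for the backward direction — matching the (RH3) conditions to the hypotheses of Theorem \ref{guessgeod}, first establishing quasi-convexity of the $N_\mu(P)$ and then the coherence condition (2) — is exactly the paper's setup. However, there is a genuine circularity in how you close the backward direction: you finish by invoking Theorem \ref{guessgeod}(a), but in this paper part (a) of that theorem is itself proved \emph{via} the implication $\Leftarrow$ of the present proposition. The paper proves only part (b) directly and states explicitly that the implication $\Leftarrow$ here ``concludes the proof of Theorem \ref{guessgeod}.'' So you are only entitled to part (b), and that leaves the substantive work undone.

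Concretely, after verifying the hypotheses of Theorem \ref{guessgeod} (with $\calP$ replaced by $\{N_{\mu'}(P)\}$ so that hypothesis (4) can be met), you must still prove directly that $X$ is $(*)$-asymptotically tree-graded and that $(\alpha_2)$ holds — neither is among the (RH3) conditions, and both are required by the definition of relative hyperbolicity. The paper does this with two claims your proposal does not address: Claim 1 shows that if three consecutive deep components on the sides of a geodesic triangle have nearly matching endpoints then they share a single peripheral witness $P$ (this is what produces case $(P)$ of $(*)$-asymptotic tree-gradedness when case $(C)$ fails, using $(\alpha_1)$ and a pentagon argument with the relative Rips condition); Claim 2 is a strengthened $(\alpha_2)$ obtained by applying (RH3)(2) along an auxiliary geodesic quadrangle. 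Part (b) of Theorem \ref{guessgeod} is used only to upgrade these statements from geodesic triangles to almost-geodesic triangles. Without Claims 1 and 2, or substitutes for them, your argument establishes the hypotheses of the Guessing Geodesics Lemma but not its conclusion.
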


\begin{proof}
 $\Leftarrow:$ First of all, let us show that any geodesic $\gamma$ with endpoints $y_1,y_2$ in $N_\mu(P)$, for some $P\in\calP$, is contained in $N_{\mu'}(P)$ for a suitable $\mu'$. Indeed, suppose that $x\in\gamma$ lies outside $N_\mu(P)$ and let $[x_1,x_2]$ be the minimal subgeodesic of $\gamma$ containing $x$ with endpoints in $N_\mu(P)$. Consider the points $x'_i\in[x_1,x_2]$ with $0<d(x_1,x'_i)\leq 1$. If $d(x_1,x_2)$ was sufficiently large, we would get a contradiction with $2)$ as $[x'_1,x'_2]\cap N_\mu(P)=\emptyset$.
\par
As a consequence, we see that, given $R\geq R_0$ large enough, $deep_{\mu,R}(\gamma)$ is a disjoint union of subgeodesics each with both endpoints in $N_\mu(P)$ for some $P\in\calP$, for any geodesic $\gamma$. In fact, if $\gamma_1,\gamma_2$ are subgeodesics of $\gamma$ so that the endpoints of $\gamma_i$ are in $N_\mu(P_i)$ and $P_1\neq P_2$ then we can assume by $1)$ that $diam(\gamma_1\cap\gamma_2)\leq R$. Fix such $R$ from now on and let $D$ be the constant given by $3)$.
\par
Another consequence, in view of $2)$, is that there exists $E$ so that for all $w,z\in [x,y]$ we have
$$d_{Haus}(trans_{\mu,R}([w,z]), (trans_{\mu,R}([x,y])\cap[w,z]) \cup \{w,z\})\leq E.$$
(Just consider the cases when $w,z$ are/are not in $trans_{\mu,R}([x,y])$.)
\par
In particular, it is now readily checked that all conditions of Theorem \ref{guessgeod} are satisfied for $\eta(x,y)=[x,y]$ any geodesic and $trans(x,y)=trans_{\mu,R}(x,y)$ (with $\calP$ substituted by $\{N_{\mu'}(P)\}_{P\in\calP}$ in order to ensure $4)$). In view of part $(b)$, the argument to show $(*)-$asymptotic tree-gradedness below work, with suitable constants, for $(1,C)-$almost-geodesics as well as geodesics. For simplicity, we will spell out the proof for geodesics only.
\par
Let us check $(*)-$asymptotic tree-gradedness. We will use the readily checked property that for each geodesic $n-$gons $\gamma_1\cup\dots\cup\gamma_{n}$, with $n\geq 3$, we have
$$trans_{\mu,R}(\gamma_1)\subseteq N_{(n-2)D}(trans_{\mu,R}(\g_2)\cup \dots\cup trans_{\mu,R}(\g_n)).$$
Let us also show the following preliminary fact.
\par
{\bf Claim 1:} Let $\alpha_i=[y_i,z_i]$ be, for $i=0,1,2$, a geodesic with endpoints in $N_{\mu}(P_i)$ for some $P_i\in\calP$. Suppose $d(z_i,y_{i+1})\leq K$. Then if $l(\alpha)\geq Q=Q(K)$ we have $P_0=P_1=P_2$.
\par\medskip
If $Q$ is large enough we can argue as follows. By $2)$ and the quasi-convexity (which can be shown with the same argument for almost-geodesics as well) of each $N_{\mu}(P)$ there exists $L$ so that for all $w\in [y_0,z_0]$ we have $trans_{\mu,R}([w,z_0])\subseteq N_L(\{w,z_0\})$. Consider $w_0\in[y_0,z_0]$ so that
$$d(w_0,y_0)=1+\sup\{diam(N_{B}(P)\cap N_{B}(P'))|P,P'\in \calP, P\neq P'\}$$
for $B=\max\{K+\mu,L+3D+\mu'\}$. Consider a pentagon with vertices $w_0,z_0,y_1,z_1,y_2$. By $2)$ there exists $p_0\in trans([w_0,y_2])\cap N_{\mu}(P_2)$. It is readily checked that $p_0$ cannot be $3D-$close to $trans_{\mu,R}([z_0,y_1])\cup trans_{\mu,R}([y_1,z_1])\cup trans_{\mu,R}([z_1,y_2])$ (for $Q$ large enough) and can be $3D-$close to $trans_{\mu,R}([w_0,z_0])$ only if $d(w_0,p_0)\leq L+3D$. In particular, $w_0,p_0\in N_B(P_0)\cap N_B(P_2)$, which implies $P_0=P_2$ by the choice of $d(w_0,y_0)$.
\par
 Consider a geodesic triangle with sides $\gamma_0,\gamma_1,\gamma_2$ with $\gamma_i=[x_i,x_{i+1}]$. Suppose that there exists $i$ and $y\in trans_{\mu,R}(\gamma_i)$ so that $d(y,trans_{\mu,R}(\gamma_{i\pm 1})\leq Q+D$, for $Q$ as in Claim 1 with $K=D+2R$. Then the triangle clearly falls into case $(C)$. Hence, suppose that this is not the case. Then there exist deep components $[y_i,z_i]\subseteq\gamma_i$ with $d(z_i,y_{i+1})\leq D+2R$. Let $P_i$ be so that $y_i,z_i\in N_\mu(P_i)$. Claim 1 implies $P_0=P_1=P_2$, and we are done.
\par\medskip
Let us now verify the following stronger version of $(\alpha_2)$.
\par
{\bf Claim 2:} If $x_1,x_2\in X$ and $P\in\calP$ have the property that $d(x_1,P)+d(x_2,P)\leq d(x_1,x_2)-L$ then on each geodesic from $x_1$ to $x_2$ there exist $x'_1,x'_2\in N_{\mu+2D}(P)\cap trans_{\mu,R}([x_1,x_2])$ with $|d(x_i,x'_i)-d(x_i,P)|\leq L$.
\par\medskip
Let $w'_i\in P$ be so that $d(x_i,w'_i)\leq d(x_i,P)+1$ and let $w_i\in[x_i,w'_i]$ be so that $d(w_i,w'_i)=\mu+2D+2$ if such $w_i$ exists. Otherwise, a similar argument can be carried out with $w_i=x_i$ and considering a triangle instead of a quadrangle. Consider a geodesic quadrangle with vertices $x_1,x_2,w_2,w_1$. By $2)$, there exists $z_i\in trans_{\mu,R}([w_1,w_2])\cap N_\mu(P)\cap B_{K}(w_i)$, for a suitable $K$ and if $L$ is large enough. Clearly, $d(z_i,[x_i,w_i])>2D$. Also, for $L$ large enough $d(z_i,[x_{i+1},w_{i+1}])>2D$ because
$$d(x_1,x_2)\leq d(x_i,w_i)+d(w_i,z_i)+d(z_i,[x_{i+1},w_{i+1}])+d(x_{i+1},P)\leq$$
$$d(x_1,P)+K+d(z_i,[x_{i+1},w_{i+1}])+d(x_2,P).$$
Hence, $d(z_i,trans_{\mu,R}([x_1,x_2]))\leq 2D$. If $x'_i\in trans_{\mu,R}([x_1,x_2])$ is so that $d(z_i,x'_i)\leq 2D$ then it clearly satisfies the requirements, up to increasing $L$ again.
\par
$\Rightarrow:$ We will show (RH3) starting from the weaker hypothesis (RH0) in Proposition \ref{rh0}.
\end{proof}

In retrospect, i.e. once the proposition is established, we see that in the first part of the proof of the proposition we have shown the following lemma.

\begin{lemma}
\label{propapplies}
 Let $X$ be hyperbolic relative to $\calP$. All hypotheses of Theorem \ref{guessgeod} are satisfied substituting $\calP$ with $\{N_{\mu'}(P)\}_{P\in\calP}$ and for $\eta(x,y)=[x,y]$ a geodesic, $trans(x,y)=trans_{\mu,R}([x,y])$, for some suitably chosen $\mu,\mu',R$.
\end{lemma}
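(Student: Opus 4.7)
The plan is to verify each of the six hypotheses of Theorem \ref{guessgeod} with $\eta(x,y)=[x,y]$ and $trans(x,y)=trans_{\mu,R}([x,y])$, where $\mu,R$ are chosen in accordance with the characterization (RH3) established in Proposition \ref{chartrans}, and the peripheral family is replaced by $\{N_{\mu'}(P)\}_{P\in\calP}$ for $\mu'$ larger than the quasi-convexity constant of the $\mu$-neighborhoods of peripheral sets.

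Conditions (1) and (5) are essentially free. Hypothesis (1) holds because $[x,y]$ has diameter at most $2$ when $d(x,y)\le 2$, so any subset does as well. Hypothesis (5) is $(\alpha_1)$ for the collection $\{N_{\mu'}(P)\}$, which follows from $(\alpha_1)$ for $\calP$ after enlarging the constant by $2\mu'$. Hypotheses (6) and (3) are, respectively, the second and third clauses of (RH3) as stated in Proposition \ref{chartrans}, needing only that $D$ be taken at least $\mu$ so that $N_\mu(P)\subseteq N_D(N_{\mu'}(P))$ for the transient point promised by (6).

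For hypothesis (2) I would reuse the estimate obtained in the course of the $\Leftarrow$ proof of Proposition \ref{chartrans}: there exists $E$ so that for all $w,z\in[x,y]$,
$$d_{Haus}\bigl(trans_{\mu,R}([w,z]),\ (trans_{\mu,R}([x,y])\cap[w,z])\cup\{w,z\}\bigr)\le E.$$
That derivation depended only on (RH3)(2) and the quasi-convexity of $\mu$-neighborhoods of peripheral sets, both of which are now available; the short case analysis on whether $w$ and $z$ are themselves transient then yields the required Hausdorff bound with $D=E$.

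The main obstacle is hypothesis (4), and I would tackle it by contrapositive. Suppose $x',y'\in[x,y]$ with no transient point of $[x,y]$ strictly between them, so that the subgeodesic $[x',y']$ is contained in $deep_{\mu,R}([x,y])$. The first part of the $\Leftarrow$ proof of Proposition \ref{chartrans} shows that for $R$ large enough $deep_{\mu,R}([x,y])$ decomposes as a disjoint union of subgeodesics, each with both endpoints in $N_\mu(P)$ for a single $P\in\calP$; quasi-convexity of $N_\mu(P)$ then places each such component inside $N_{\mu'}(P)$ for $\mu'$ chosen to dominate the quasi-convexity constant. Consequently $x'$ and $y'$ both lie in $N_{\mu'}(P)$ for a common $P$, which is exactly what (4) requires of the substituted peripheral family. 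The only subtlety is coordinating $\mu,\mu',R$ and the various auxiliary constants so that all six hypotheses hold simultaneously with a single choice of parameters; this is bookkeeping rather than a genuine difficulty.
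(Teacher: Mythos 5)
Your proposal is correct and follows essentially the same route as the paper: the paper proves this lemma by observing that the first part of the $\Leftarrow$ argument in Proposition \ref{chartrans} already verifies all six hypotheses for $\eta(x,y)=[x,y]$ and $trans(x,y)=trans_{\mu,R}([x,y])$, using exactly the ingredients you cite --- quasi-convexity of $N_\mu(P)$, the decomposition of $deep_{\mu,R}$ into components with endpoints in $N_\mu(P)$ (which is the reason for passing to $\{N_{\mu'}(P)\}$ so that condition $(4)$ holds), and the Hausdorff estimate for condition $(2)$. Your checks of $(1)$, $(3)$, $(5)$, $(6)$ and the constant bookkeeping match the paper's (largely implicit) verification.
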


\begin{defn}
 A subset $P$ of a metric space $X$ is \emph{$K-$coarsely connected} if for all $x,y\in P$ there exists a chain $x=x_0,\dots,x_n=y$ of points $x_i\in P$ satisfying $d(x_i,x_{i+1})\leq K$. A collection $\calP$ of \emph{uniformly coarsely connected} subsets is a collection of $K-$coarsely connected subsets for some $K$.
\end{defn}

Recall that we defined Bowditch spaces in Definition \ref{bowsp}.

\begin{prop}[Characterization (RH1)]
\label{rh1}
 Let $X$ be a geodesic metric space and let $\calP$ be a collection of uniformly coarsely connected subsets of $X$. Then $(X,\calP)$ is relatively hyperbolic $\iff$ each (some) Bowditch space $Bow(X)$ is hyperbolic.
\par
Moreover, given a model of $Bow(X)$ there exist $K,\mu,R_0$ so that for each geodesic $\gamma$ in $Bow(X)$ connecting $x, y\in X$ we have that $\gamma\cap X$ is within Hausdorff distance $K+R$ from $trans_{\mu,R}([x,y])$ for each $R\geq R_0$.
\end{prop}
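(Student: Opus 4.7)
The plan is to deploy two Guessing Geodesics arguments: Theorem \ref{guessgeod} for the implication $(\Leftarrow)$ and the hyperbolic version Proposition \ref{guessgeodhyp} for $(\Rightarrow)$, with part $(b)$ of Theorem \ref{guessgeod} supplying the moreover clause. The ``each $\iff$ some'' equivalence is a preliminary quasi-isometry check: different choices of approximation graphs and nets with comparable parameters yield quasi-isometric Bowditch spaces, since each $\Gamma_P$ is quasi-isometric to $P$ by uniform coarse connectedness, and rescaling the net only changes $\calH(\Gamma_P)$ by a uniform quasi-isometry.

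For $(\Leftarrow)$, assume some $Bow(X)$ is $\delta$-hyperbolic. For each $x,y \in X$ fix a geodesic $\tilde\gamma_{xy}$ in $Bow(X)$, decompose it into maximal subsegments entirely inside a single horoball $\calH(\Gamma_P)$ and subsegments staying in $X$, and replace each horoball subsegment by a chain in the corresponding $P$ using coarse connectedness. Call the resulting path $\eta(x,y)\subset X$ and set $trans(x,y) = \tilde\gamma_{xy}\cap X$. All six hypotheses of Theorem \ref{guessgeod} have natural content in $Bow(X)$: items $(1)$--$(3)$ come from $\delta$-hyperbolicity and geodesic stability in $Bow(X)$; item $(4)$ holds because any subarc of $\eta(x,y)$ whose endpoints do not share a common $P$ must traverse a piece originating outside any single horoball; item $(5)$, i.e. $(\alpha_1)$, expresses the standard fact that distinct horoballs in a hyperbolic space have bounded thick coarse intersection; item $(6)$ is the ``geodesic shadows the horosphere'' picture in a hyperbolic horoball. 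Theorem \ref{guessgeod}$(a)$ then gives that $X$ is hyperbolic relative to $\calP$.

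For $(\Rightarrow)$, assume $(X,\calP)$ is relatively hyperbolic. For $\tilde x,\tilde y\in Bow(X)$ I build $\eta(\tilde x,\tilde y)$ by vertically sliding to net-level points in $X$ if needed, taking a geodesic $[x,y]$ in $X$, and replacing each maximal deep component of $deep_{\mu,R}([x,y])$ near some $P$ by the horoball geodesic in $\calH(\Gamma_P)$ joining the corresponding net points (two vertical plus one short horizontal segment). The three hypotheses of Proposition \ref{guessgeodhyp} become: short-arc control (immediate); Hausdorff stability of restrictions, which reduces to stability of $trans_{\mu,R}$ under restriction in a relatively hyperbolic space and hence to Lemma \ref{propapplies}; and triangle slimness, which is the main work. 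Slimness is proved by case analysis on how many vertices of the triangle lie inside common horoballs: the ``$X$-backbones'' of the three paths form a geodesic triangle in $X$ to which the weakly $(*)-$asymptotically tree-graded alternative (case $(C)$ or $(P)$) applies, while subsegments running inside a single horoball are handled by the intrinsic hyperbolicity of $\calH(\Gamma_P)$. Proposition \ref{guessgeodhyp} then delivers hyperbolicity of $Bow(X)$ together with a Hausdorff bound between each $\eta(\tilde x,\tilde y)$ and a $Bow(X)$-geodesic. Taking $\tilde x=x$, $\tilde y=y$ in $X$ and intersecting with $X$, $\eta(x,y)\cap X$ differs from $trans_{\mu,R}([x,y])$ by at most $R$ plus a uniform additive constant by construction, which combined with the Hausdorff bound gives the moreover clause for a suitable $K$.

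The \emph{main obstacle} I anticipate is the triangle-slimness step in $(\Rightarrow)$: the case split on which vertices share horoballs, combined with the need to stitch vertical and short horizontal segments to the $X$-backbones without inflating the triangle, will require careful bookkeeping of constants across the horoball interiors and the ambient $X$. A smaller but persistent nuisance is keeping the coarse-connectedness constant uniform throughout the replacement step in $(\Leftarrow)$ and throughout the ``each $\iff$ some'' quasi-isometry comparison.
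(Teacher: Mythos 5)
Your $(\Leftarrow)$ direction matches the paper's: both apply Theorem \ref{guessgeod} to $\eta(x,y)$ obtained from a $Bow(X)$-geodesic by pushing horoball excursions back into the corresponding $P$, with $trans(x,y)=[x,y]\cap X$, and the hypothesis checks you list are the ones the paper performs. Your $(\Rightarrow)$ direction, however, is a genuinely different route. The paper does \emph{not} verify Proposition \ref{guessgeodhyp} in $Bow(X)$; instead it applies Theorem \ref{guessgeod} a second time, \emph{inside} $Bow(X)$, taking $\calP$ to be the collection of combinatorial horoballs, $\eta(x,y)$ a concatenation of two horoball geodesics with an $X$-geodesic, and $trans(x,y)=trans_{\mu,R}(\gamma_{x,y})$ via Lemma \ref{propapplies}. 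This shows $Bow(X)$ is hyperbolic relative to its horoballs, which are themselves hyperbolic, hence $Bow(X)$ is hyperbolic --- entirely sidestepping the triangle-slimness bookkeeping you correctly identify as your main obstacle. The paper also extracts the ``moreover'' clause from part $(b)$ of Theorem \ref{guessgeod} in the $(\Leftarrow)$ setup (where $trans(x,y)=[x,y]_{Bow}\cap X$ and $\beta$ is an $X$-geodesic), rather than from the conclusion of Proposition \ref{guessgeodhyp} as you do.

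One step of your slimness argument is under-specified and is worth flagging: the weak $(*)$-ATG alternative (case $(C)$ or $(P)$) only controls the \emph{center} of the backbone triangle in $X$, i.e.\ one peripheral set whose neighborhood meets all three sides. Slimness in $Bow(X)$ additionally requires that \emph{every} long deep component of one side --- including those far from the center, over which your $\eta$ climbs high into a horoball --- is shadowed by a deep component of another side entering the same $P$ at coarsely the same entrance and exit points; only then does hyperbolicity of $\calH(\Gamma_P)$ make the two horoball excursions uniformly close. This matching of deep components is true but does not follow from the $(C)/(P)$ dichotomy alone; it needs the projection machinery (Lemma \ref{proj2:lem}, Corollary \ref{firstpoint:cor}, or equivalently the stability of $trans_{\mu,R}$ in Lemma \ref{propapplies}). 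Since you already invoke that machinery for condition $(2)$, the argument can be completed, but as written the slimness step has a real hole to fill. If you want to avoid it altogether, adopt the paper's trick of proving relative hyperbolicity of $Bow(X)$ with respect to the horoballs instead.
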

The following is an easy consequence of the proposition.

\begin{cor}
\label{rh1geod}
Suppose that $(X,\calP)$ is relatively hyperbolic and fix a model for $Bow(X)$. Then there exists $K$ so that if $\gamma$ is a geodesic in $Bow(X)$ then substituting each subpath of $\gamma$ lying outside $X$ with a geodesic in $X$ we get a $(K,K)-$quasi-geodesic.
\end{cor}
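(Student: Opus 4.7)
My plan is to combine three ingredients: the description of geodesics in combinatorial horoballs, the Hausdorff estimate from Proposition~\ref{rh1}, and the quasi-convexity of peripheral sets in relatively hyperbolic spaces.

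First, using the standard description of geodesics in a combinatorial horoball (two vertical segments joined by at most a bounded-length horizontal segment, as noted after the horoball definition), I would decompose $\gamma$ as an alternating concatenation
\[
\gamma = \sigma_1 \cdot \tau_1 \cdot \sigma_2 \cdot \tau_2 \cdots \sigma_n,
\]
where each $\sigma_j$ lies in $X$ and each $\tau_j$ is a maximal horoball excursion that enters the horoball over some $P_j \in \calP$ at a point $a_j \in P_j$ and exits at $b_j \in P_j$. Since a rectifiable curve contained in $X$ has the same length measured in $X$ or in $Bow(X)$, the chain $d_X(\text{endpoints of }\sigma_j) \le l(\sigma_j) = d_{Bow}(\text{endpoints of }\sigma_j) \le d_X(\text{endpoints of }\sigma_j)$ forces each $\sigma_j$ to be an $X$-geodesic. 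Hence $\tilde\gamma$ is a concatenation of $X$-geodesics, namely the $\sigma_j$'s together with replacement $X$-geodesics $\tilde\tau_j$ from $a_j$ to $b_j$.

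Second, fix $R=R_0$ and invoke the ``moreover'' clause of Proposition~\ref{rh1}: the set $\gamma \cap X = \bigsqcup_j \sigma_j$ lies within Hausdorff distance $K+R$ of $trans_{\mu,R}([x,y])$. Each $a_j, b_j$ lies both in $P_j$ and in $\gamma\cap X$, so a nearby transient point of $[x,y]$ must, by $(\alpha_1)$, itself lie in $N_\mu(P_j)$ and coincide (up to an additive constant) with the entry/exit point $c_j/d_j$ of the $j$-th deep subsegment $[c_j,d_j]$ of $[x,y]$. Summing the contributions of the $\sigma_j$'s (bounded by the transient lengths of $[x,y]$) and of the $\tilde\tau_j$'s (of length $d_X(a_j,b_j) \le d_X(c_j,d_j) + 2(K+R+\mu)$), and using that each deep segment has length at least $2R$ so the number of pieces is $O(d_X(x,y))$, we obtain $l(\tilde\gamma) \le K_0\, d_X(x,y) + K_0$.

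Third, to promote this to a quasi-geodesic bound on every sub-arc: when $p,q \in \gamma\cap X$, the restriction $\gamma|_{[p,q]}$ is itself a $Bow$-geodesic between points in $X$, so the same argument yields $l(\tilde\gamma|_{[p,q]}) \le K_0\, d_X(p,q) + K_0$. If $p$ lies in the interior of some $\tilde\tau_i$ (and similarly for $q$), I split $l(\tilde\gamma|_{[p,q]}) = d_X(p,b_i) + l(\tilde\gamma|_{[b_i,q]})$ and reduce to the previous case; the potential new error $d_X(p,b_i)$ is controlled by $d_X(p,q)$ using that $\tilde\tau_i$ lies in a bounded neighborhood of $P_i$ (quasi-convexity of peripherals in a relatively hyperbolic space) while $q$, if not on $\tilde\tau_i$, stays far from $P_i$ by $(\alpha_1)$, so no shortcut through $P_i$ can defeat the length bound.

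The main obstacle I expect is this last step — controlling the error contributed by points deep inside a $\tilde\tau_i$ — since one must combine $(\alpha_1)$ with the quasi-convexity of peripheral sets to rule out peripheral shortcuts; by contrast, the cases where both $p,q$ lie on a single $\tilde\tau_i$ are immediate as $\tilde\tau_i$ is itself an $X$-geodesic.
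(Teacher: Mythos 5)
Your architecture is the right one: decompose $\gamma$ into maximal horoball excursions $\tau_j$ and complementary pieces $\sigma_j$, observe that each $\sigma_j$ is already an $X$-geodesic (the length comparison you give is correct, since a path contained in $X$ has the same length in $X$ and in $Bow(X)$), bound the total length of $\tilde\gamma$ via the ``moreover'' clause of Proposition~\ref{rh1}, and then reduce the quasi-geodesic inequality for arbitrary $p,q$ to the case of the distinguished points $a_i,b_i$. (Even in Step 2, note that passing from the Hausdorff bound between the \emph{sets} $\gamma\cap X$ and $trans_{\mu,R}([x,y])$ to a bound on a \emph{sum of lengths} needs an ordering/matching argument — that the excursions of $\gamma$ and the deep components of $[x,y]$ occur in the same order with corresponding entry/exit points — which you assert but do not prove.)

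The genuine gap is in Step 3, which is in fact the entire content of the corollary. You need $d_X(p,b_i)\le C\,d_X(p,q)+C$ for $p$ interior to $\tilde\tau_i$ and $q$ beyond $b_i$, and you justify it by saying that $q$ ``stays far from $P_i$ by $(\alpha_1)$.'' This is not what $(\alpha_1)$ gives: $(\alpha_1)$ only bounds the overlap of neighborhoods of two \emph{distinct} peripherals, so it says nothing when $q$ lies on some $\sigma_j$, and in any case the danger is not that $q$ is close to $P_i$ but that $q$ is close to a point of $P_i$ whose projection is \emph{far from $b_i$} — i.e.\ that the tail of $\gamma$ backtracks along $P_i$, in which case $d_X(p,q)$ could be tiny while $d_X(p,b_i)$ is huge, killing the quasi-geodesic bound. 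What is actually needed is a no-backtracking statement: $d_X(\pi_{P_i}(q),b_i)$ is uniformly bounded for every $q$ on $\gamma$ after $b_i$; combined with Lemma~\ref{proj2:lem} (geodesics pass near projections) this yields the missing inequality. That statement can be extracted either from the geodesicity of $\gamma$ in the hyperbolic space $Bow(X)$ — a geodesic leaving the horoball over $P_i$ at $b_i$ cannot re-enter it, hence its tail projects near $b_i$ — or from condition $(2)$ of (RH3) applied to a reference geodesic $[x,y]$ together with the Hausdorff comparison of Proposition~\ref{rh1} (a transient point of $[x,y]$ lying in a bounded neighborhood of $P_i$ must be close to the entry or exit point, so $q$ cannot sit next to a deep point of $\tilde\tau_i$). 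You correctly flag this step as the main obstacle, but the tool you propose for it does not close it.
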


\begin{proof}[Proof of Proposition \ref{rh1}]
 $\Leftarrow:$ First of all, let us show that all combinatorial horoballs $H=\calH(P)$ in $Bow(X)$ are within bounded Hausdorff distance from actual horoballs. As this is true in the path metric of $H$, we just need to show that any geodesic with endpoints in $H$ is contained in $H$ except in uniformly bounded balls around its endpoints. This is easy to see: consider $x,y\in P$, a geodesic $\gamma$ in $Bow(X)$ connecting them and the vertical rays $\gamma_x,\gamma_y$ at $x,y$. Such vertical rays are geodesic rays in $Bow(X)$, and their Hausdorff distance is finite. The ideal triangle $\gamma,\gamma_x,\gamma_y$ is thin, and this easily implies that $\gamma$ is contained in $H$, except in balls around the endpoints of radius bounded in terms of the hyperbolicity constant.
\par
For convenience, we will assume that each $P\in\calP$ is path-connected, which we can guarantee by adding suitable paths to $X$ in view of uniform coarse connectedness. We can use Theorem \ref{guessgeod} applied to $\eta(x,y)$ constructed substituting each subpath of $[x,y]\subseteq Bow(X)$ contained in some combinatorial horoball with a path in $P$ with the same endpoints, with $trans(x,y)$ being $[x,y]\cap X$. In view of the previous paragraph and the fact that the metric $d_{Bow(X)}$ is coarsely equivalent to $d_X$ on $X$, all conditions are readily checked. More precisely, $1)$ and $4)$ are clear, $5)$ and $6)$ follow from the corresponding statements in $Bow(X)$ and $2)$ can be proven in a similar way to $3)$, which we are about to show. Consider a geodesic triangle in $Bow(X)$ with vertices $x,y,z\in X$. Consider a point $w\in[x,y]\cap X$ and assume without loss of generality that there exists $p\in[y,z]$ with $d_{Bow(X)}(p,w)\leq \delta$, the hyperbolicity constant. As $y,z\in X$ and we can regard the levels of $H$ as horospheres, there is on $[y,z]\cap X$ a point $q$ not too far away from $p$, meaning that $d_{Bow(X)}(p,q)$ is bounded by some constant depending on $Bow(X)$ only. Given this, $3)$ follows from the coarse equivalence of $d_{Bow(X)}$ and $d_X$ on $X$.
\par
$\Rightarrow$: As earlier, assume that $P$ is path-connected. Consider any Bowditch space $Bow(X)$. Let $\phi:Bow(X)\to X$ be the map restricting to the identity on $X$ and mapping in the natural way each combinatorial horoball to the corresponding $P\in\calP$. Define $\eta(x,y)$ to be the concatenation of a geodesic in the horoball from $x$ to $\phi(x)$, any geodesic $\gamma_{x,y}$ in $X$ from $\phi(x)$ to $\phi(y)$ and a geodesic in the horoball from $\phi(y)$ to $y$. Also, let $trans(x,y)=trans_{\mu,R}(\gamma_{x,y})$, where $\mu,R$ are given by Lemma \ref{propapplies}. The hypotheses of Theorem \ref{guessgeod} are satisfied in the metric of $Bow(X)$ as well, because the restriction of such metric on $X$ is coarsely equivalent to the metric of $X$. In particular, $Bow(X)$ is hyperbolic relative to the collection of combinatorial horoballs, which are themselves hyperbolic. Hence, $Bow(X)$ is hyperbolic.
\end{proof}

\begin{prop}[Characterization (RH2)]
\label{rh2}
 Let $X$ be a geodesic metric space and let $\calP$ be a collection of uniformly coarsely connected subsets of $X$. Then $(X,\calP)$ is relatively hyperbolic $\iff$ each (some) coned-off graph $\hat{X}$ is hyperbolic and the BCP property holds for $\hat{X}$.
\par
Moreover, given a model of $\hat{X}$ there exist $K, \mu, R_0$ so that geodesics in $X$ are within Hausdorff $\hat{X}-$distance $K$ from geodesics in $\hat{X}$ and, conversely, if $\gamma$ is a geodesic in $\hat{X}$ connecting $x,y\in X$ then $\gamma\cap X$ is within Hausdorff $X-$distance $K+R$ from $trans_{\mu,R}([x,y])$, for each $R\geq R_0$.
\end{prop}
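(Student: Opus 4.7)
The plan is to mirror the strategy used for Proposition \ref{rh1}, with the Guessing Geodesics Lemma (Theorem \ref{guessgeod}) driving the $\Leftarrow$ direction and the hyperbolic Guessing Geodesics Lemma (Proposition \ref{guessgeodhyp}) together with Lemma \ref{propapplies} driving the $\Rightarrow$ direction. As in Proposition \ref{rh1}, by uniform coarse connectedness I may assume each $P\in\calP$ is path-connected after attaching bounded-length paths, which does not affect the collection up to bounded Hausdorff distance.

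For $\Leftarrow$, assume $\hat{X}$ is hyperbolic and satisfies BCP. For each $x,y\in X$ take a geodesic $\hat{\gamma}_{x,y}$ in $\hat{X}$; after BCP-based reduction it may be assumed to be a standard path without backtracking. Define $\eta(x,y)$ by replacing each $\calP$-component $e_{u,v}$ in $\hat{\gamma}_{x,y}$ by a path in the corresponding $P\in\calP$ from $u$ to $v$, and set $trans(x,y)=\hat{\gamma}_{x,y}\cap X$. Conditions (1), (4) of Theorem \ref{guessgeod} are immediate from the construction (and from the coarse equivalence between $d_X$ and $d_{\hat X}|_X$); conditions (2), (3) are the $X$-translations of slim-triangle and stability statements in $\hat X$, using BCP to transfer information about $\calP$-components along subpaths and between sides of a triangle; condition (5), i.e. $(\alpha_1)$, follows because if $N_K(P)\cap N_K(Q)$ had unbounded diameter for some $P\neq Q$, one would produce pairs of short standard paths without backtracking having corresponding long $P$- and $Q$-components, contradicting the first BCP clause; condition (6) then uses the second BCP clause together with hyperbolicity to constrain how $\hat{X}$-geodesics between points near a peripheral set enter and exit the corresponding cone.

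For $\Rightarrow$, assume $(X,\calP)$ relatively hyperbolic. To prove hyperbolicity of $\hat{X}$, I apply Proposition \ref{guessgeodhyp}. For $x,y\in X$, take an $X$-geodesic $[x,y]$ and construct an $\hat{X}$-path $\eta_{\hat X}(x,y)$ by replacing each maximal $(\mu,R)$-deep subpath (whose endpoints are close to a single $P\in\calP$ by $(\alpha_1)$) with a pair of short segments to nearby net points joined by one $P$-component. Conditions (1)--(3) of Proposition \ref{guessgeodhyp} then follow directly from Lemma \ref{propapplies} applied to $X$: they are the $d_{\hat X}$-version of the corresponding statements for $trans_{\mu,R}$, which hold by assumption. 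For the BCP property, given two $L$-quasi-geodesic standard paths $\g_1,\g_2$ without backtracking with corresponding endpoints within $d_X$-distance $1$, unfold each $\calP$-component into a geodesic of $X$; the resulting paths are quasi-geodesics in $X$, and by the quasi-convexity/transient-set comparison (cf.\ Remark preceding the proposition and Lemma \ref{propapplies}), their $(\mu,R)$-deep subpaths must be matched in the same $P\in\calP$ with endpoints at bounded $d_X$-distance, which translates back into the two BCP clauses.

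The moreover clause is then extracted from the same constructions. In $\Leftarrow$, $trans(x,y)=\hat{\gamma}_{x,y}\cap X$ is matched with $trans_{\mu,R}([x,y])$ up to error $K+R$ by Theorem \ref{guessgeod}(b); in $\Rightarrow$, the replacement procedure shows directly that an $X$-geodesic, converted as above, is a quasi-geodesic in $\hat{X}$ that stays within bounded $\hat X$-Hausdorff distance from any $\hat X$-geodesic with the same endpoints, and conversely unfolding an $\hat X$-geodesic yields a quasi-geodesic in $X$ whose transient set agrees with $trans_{\mu,R}$ by Lemma \ref{propapplies} and Proposition \ref{transqgeod}. The main obstacle I anticipate is the BCP verification in the $\Rightarrow$ direction: the combinatorics of matching $\calP$-components across two standard paths, and separating "short" from "long" $\calP$-components, must be handled with care using $(\alpha_1)$ and the transient-point technology, but once set up it is analogous to the standard group-theoretic argument in \cite{Fa} transported through Lemma \ref{propapplies}.
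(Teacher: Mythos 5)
Your overall architecture ($\Leftarrow$ via Theorem \ref{guessgeod} applied to unfolded $\hat{X}$-geodesics with $trans(x,y)=\hat\gamma\cap X$; $\Rightarrow$ hyperbolicity of $\hat{X}$ via Proposition \ref{guessgeodhyp} and Proposition \ref{chartrans}-(3)) coincides with the paper's. The genuine gap is in your verification of the BCP property in the $\Rightarrow$ direction. You assert that unfolding the $\calP$-components of an $L$-quasi-geodesic standard path without backtracking into $X$-geodesics yields a quasi-geodesic in $X$ (with constants depending only on $L$), and then feed these into Proposition \ref{transqgeod}. But that assertion is exactly the content of Corollary \ref{rh2geod}, which in the paper is \emph{deduced from} Proposition \ref{rh2}; a priori there is no control on the $d_X$-length of a $\calP$-component of a quasi-geodesic in $\hat{X}$ relative to $d_X(x,y)$, and ruling out a short $\hat{X}$-quasi-geodesic whose unfolding makes a long excursion into a peripheral set is precisely what the first BCP clause says. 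So as written the argument is circular, and no independent justification is offered for the step on which everything else in that paragraph rests.

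The paper avoids this by working with coarse closest-point projections $\pi_P$ rather than with unfoldings: it proves (i) a standard path containing no $P$-component moves $\pi_P$ by an amount linear in its $\hat{X}$-length (coarse Lipschitzness of $\pi_P$ plus bounded projections of distinct peripherals, Lemma \ref{ap3}); (ii) an $\hat{X}$-quasi-geodesic staying far from $P$ in $\hat{X}$ moves $\pi_P$ by a uniformly bounded amount (via property $(AP'2)$ of \cite{Si-proj}); and (iii) consequently the endpoints $p,q$ of any $P$-component of a quasi-geodesic standard path without backtracking from $x$ to $y$ satisfy $d(p,\pi_P(x)),d(q,\pi_P(y))\leq K_3$. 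Both BCP clauses then follow by comparing the two paths through their projections to $P$. To repair your proof you would need to either reproduce this projection argument or give an independent proof that unfoldings of quasi-geodesics without backtracking are uniform $X$-quasi-geodesics, which is of comparable difficulty. The remaining parts of your proposal (the $\Leftarrow$ direction and the ``moreover'' clause) are sketchier than the paper's treatment but follow the same route and contain no comparable flaw.
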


\begin{cor}
\label{rh2geod}
Suppose that $(X,\calP)$ is relatively hyperbolic and fix a model for $\hat{X}$. Then there exists $K$ so that if $\gamma$ is a geodesic in $\hat{X}$ then substituting each component of $\gamma$ with a geodesic in $X$ we get a $(K,K)-$quasi-geodesic.
\end{cor}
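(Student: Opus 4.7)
The plan is to write the $\hat{X}$-geodesic $\gamma$ as a concatenation $\alpha_0 \cdot e_1 \cdot \alpha_1 \cdots e_n \cdot \alpha_n$, where each $\alpha_i$ is an $X$-geodesic sub-path (possibly trivial) and each $e_i = e_{a_i,b_i}$ is a $\calP$-component with $a_i, b_i \in N_{P_i}$ for some $P_i \in \calP$, so that $\gamma'$ is the corresponding concatenation $\alpha_0 \cdot [a_1, b_1] \cdot \alpha_1 \cdots [a_n, b_n] \cdot \alpha_n$ for some $X$-geodesics $[a_i, b_i]$. I would then use the moreover statement of Proposition \ref{rh2} to compare $\gamma$ and $\gamma'$ to an $X$-geodesic $[x,y]$, establish a correspondence between the $\calP$-components of $\gamma$ and the deep components of $[x,y]$, and conclude by a direct length computation, carried out first for the endpoints and then for arbitrary sub-paths.

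For the global length bound, I would pick $R \geq R_0$ large enough that $deep_{\mu,R}([x,y])$ decomposes as a disjoint union of segments $[u_j,v_j] \subseteq N_\mu(P'_j)$ for distinct $P'_j \in \calP$ (as in the beginning of the proof of Proposition \ref{chartrans}, using $(\alpha_1)$), and write $[x,y] = \tau_0 \cdot [u_1,v_1] \cdots [u_m,v_m] \cdot \tau_m$. By the moreover part of Proposition \ref{rh2}, $\gamma \cap X = \bigcup_i \alpha_i$ lies at Hausdorff $X$-distance at most $K+R$ from $trans_{\mu,R}([x,y]) = \bigcup_j \tau_j$. Combined with the quasi-convexity of peripheral sets (any $X$-geodesic between two points of $N_\mu(P)$ stays in some $N_{\mu'}(P)$, shown at the start of the proof of Proposition \ref{chartrans}) together with $(\alpha_1)$, this forces $n = m$, $P_i = P'_i$ in order, and $d_X(a_i, u_i), d_X(b_i, v_i) \leq C$ for a uniform constant $C$. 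Hence
\[ l(\gamma') = \sum_i l(\alpha_i) + \sum_i d_X(a_i,b_i) \leq l([x,y]) + 4nC = d_X(x,y) + 4nC, \]
and since $n \leq d_{\hat{X}}(x,y) \leq d_X(x,y)+1$ we obtain $l(\gamma') \leq K' d_X(x,y) + K'$ for some $K'$.

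For the sub-path bound, given $p,q \in \gamma'$ with $p$ before $q$: if they lie in a common $\alpha_i$ or common $[a_i,b_i]$ then $l(\gamma'|_{[p,q]}) = d_X(p,q)$ since these pieces are $X$-geodesics. Otherwise, let $p^+ \in \gamma \cap X$ be the first junction point at or after $p$ (so $p^+ = b_i$ if $p \in [a_i,b_i]$, else $p^+ = p$) and $q^- \in \gamma \cap X$ the last junction point at or before $q$, and apply the global argument above to the sub-geodesic $\gamma|_{[p^+, q^-]}$ of $\gamma$ to get $l(\gamma'|_{[p^+, q^-]}) \leq K' d_X(p^+, q^-) + K'$. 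To absorb the leftover portions from $p$ to $p^+$ and from $q^-$ to $q$, I would argue that if $p \in [a_i,b_i]$ then both $p$ and $p^+ = b_i$ lie in $N_{\mu'}(P_i)$, and applying the same matching to the $X$-geodesic $[p,q]$ (whose deep component through $P_i$ must exit $N_\mu(P_i)$ close to $b_i$) gives $d_X(p, p^+) \leq d_X(p,q) + C''$, with a symmetric bound at $q$; combining these yields the desired $(K,K)$-quasi-geodesic estimate.

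The main obstacle is this last step: a priori the forward sub-arc of $\gamma'$ from $p$ to $p^+$ could contribute up to $l([a_i, b_i])$, a quantity unrelated to $d_X(p,q)$. The resolution uses the quasi-convexity of peripheral sets and the observation that the $X$-geodesic $[p,q]$ inherits from $[x,y]$ the same deep-component structure through $P_i$, so its exit from $N_\mu(P_i)$ must occur within a uniformly bounded distance of $b_i$.
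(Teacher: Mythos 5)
The paper gives no proof of this corollary (it is left as an immediate consequence of Proposition \ref{rh2}), so there is nothing to compare line by line; your strategy --- decompose $\gamma$ into $X$-geodesic pieces and $\calP$-components, use the ``moreover'' part of Proposition \ref{rh2} together with the deep-component structure of $[x,y]$, and then reduce the subpath estimate to the endpoint estimate via quasi-convexity and projections --- is the intended one and is sound overall. Two remarks on the execution. First, the claim that the Hausdorff-distance statement ``forces $n=m$ and $P_i=P'_i$ in order'' is false as stated: a geodesic in $\hat{X}$ will use a $P$-component between net points at $X$-distance, say, $2$ whenever that is not longer in $\hat{X}$, and $[x,y]$ need have no deep component there; conversely only the long $\calP$-components of $\gamma$ are detected by $trans_{\mu,R}([x,y])$. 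This does not damage the length computation --- components of $X$-length below the BCP threshold contribute $O(1)$ each and there are at most $n\leq d_{\hat X}(x,y)\leq d_X(x,y)$ of them --- but you should restrict the bijection to the long components and absorb the short ones into the linear error term.

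Second, the inequality $\sum_i l(\alpha_i)+\sum_i d_X(a_i,b_i)\leq l([x,y])+4nC$ needs the matched points to occur \emph{in order} along $[x,y]$, so that the arcs $[u_j,v_j]$ and $\tau_j$ you are charging against are essentially disjoint; Hausdorff closeness of $\gamma\cap X$ to $trans_{\mu,R}([x,y])$ alone does not give monotonicity. The clean way to get both the ordering and the estimates $d_X(a_i,u_i),d_X(b_i,v_i)\leq C$ is to invoke the lemmas established in the $\Rightarrow$ direction of the proof of Proposition \ref{rh2}: since $\gamma$ is a geodesic standard path without backtracking, the endpoints of its $P_i$-component are uniformly close to $\pi_{P_i}(x)$ and $\pi_{P_i}(y)$, and by Lemma \ref{proj2:lem} the geodesic $[x,y]$ passes uniformly close to these projections, in the order dictated by the entrance/exit points. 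The same projection argument is what makes your final step rigorous ($\pi_{P_i}(q)$ lies close to $b_i$, and $[p,q]$ passes close to $\pi_{P_i}(q)$ since $p\in N_{\mu'}(P_i)$, whence $d_X(p,b_i)\leq d_X(p,q)+C''$). With these repairs the argument is correct.
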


\begin{proof}[Proof of Proposition \ref{rh2}]
 $\Rightarrow:$ Hyperbolicity of $\hat{X}$ and the fact that geodesics in $X$ are close in the $d_{\hat{X}}$ metric to $\hat{X}-$geodesics follow from Proposition \ref{guessgeodhyp} using Proposition \ref{chartrans}-$(3)$ (the requirement for the paths $\eta(x,y)$ to be arcs is inessential, as one can see taking a product of $X$ with $[0,1]$, for example). We have to show the BCP property. For each $P\in\calP$ denote by $\pi_P:X\to P$ be a map satisfying $d(x,\pi_P(x))\leq d(x,P)+1$. We will use the following lemmas.

\begin{lemma}
 Let $(X,\calP)$ be relatively hyperbolic. There exists $K_0$ with the following property. Consider a standard path $\alpha:I\to\hat{X}$ with endpoints $x,y\in X$ that does not contain a $P-$component, for some $P\in\calP$. Then
$$d(\pi_P(x),\pi_P(y))\leq K_0 l(I)+K_0.$$
\end{lemma}

\begin{proof}
$\pi_P$ is coarsely Lipschitz in $X$ and also for each $P'\in\calP$ with $P'\neq P$ we have a bound on $diam(\pi_P(P'))$ \cite{Si-proj}. Hence, in order to prove the lemma we just need to consider a subdivision of $\alpha$.
\end{proof}

\begin{lemma}
 Let $(X,\calP)$ be relatively hyperbolic. Then there exists $K_1$ with the following property. For each $L$ there exists $K_2$ so that if the $L-$quasi-geodesic $\alpha$ in $\hat{X}$ with endpoints $x,y\in X$ does not intersect $B^{\hat{X}}_{K_2}(P)$, for some $P\in\calP$, then
$$d(\pi_P(x),\pi_P(y))\leq K_1.$$
\end{lemma}

\begin{proof}
As $\hat{X}$ is hyperbolic and geodesics in $X$ are close to geodesics in $\hat{X}$, by taking $K_2$ large enough we can assume that a geodesic in $X$ from $x$ to $y$ stays as far as we wish, in $\hat{X}$ and hence in $X$, from $P$. The uniform bound then follows from $(AP'2)$ in \cite{Si-proj}.
\end{proof}

\begin{lemma}
 Let $(X,\calP)$ be relatively hyperbolic. Then for each $L$ there exists $K_3$ with the following property. Let $\alpha$ be an $L-$quasi-geodesic standard path without backtracking with endpoints $x,y$. Suppose that $\alpha$ contains a $P-$component $e_{p,q}$, for some $P\in\calP$. Then $d(p,\pi_P(x)),d(q,\pi_P(y))\leq K_3$.
\end{lemma}

\begin{proof}
Let $\alpha'$ be the subpath of $\alpha$ from $x$ to $p$. Subdivide $\alpha'$ in subpaths $\alpha_1,\alpha_2$ so that $\alpha_1\cap B^{\hat{X}}_{K_2}(P)=\emptyset$ and $l(I)\leq LK_2+L^2+1$. We get the desired bound applying the first lemma to $\alpha_2$ and the second one to $\alpha_1$.
\end{proof}

This lemma readily implies the second part of the BCP property. Let us show the first part. Consider $L-$quasi-geodesic standard paths $\g_1,\g_2$ without backtracking with corresponding endpoints (in $X$ and) at $d_X-$distance at most 1 and suppose that $\g_1$ has a $P-$component $e_{p,q}$. Also, suppose that $\g_2$ does not have a $P-$component. We have to provide a bound for $d(p,q)$. Let $K=\max\{K_i\}$. We can split $\g_2$ into three subpaths $\alpha_1,\alpha_2,\alpha_3$ so that $\alpha_1,\alpha_3\cap B^{\hat{X}}_{K}(P)=\emptyset$ and $l(I)\leq 2LK+L^2+1$, where $\alpha_2:I\to \hat{X}$. As $\alpha_2$ does not contain a $P-$component, by the previous lemmas we easily get a bound on $d_X(p,q)$. For completeness, we remark that we just showed the following.

\begin{lemma}
 Let $(X,\calP)$ be relatively hyperbolic. Then for each $L$ there exists $K_4$ with the following property. Suppose that $d(\pi_P(x),\pi_P(y))\geq K_4$, for some $x,y\in X$ and $P\in\calP$. Then any $L-$quasi-geodesic standard path without backtracking with endpoints $x,y$ contains a $P-$component.
\end{lemma}

$\Leftarrow:$ For convenience, we will assume that each $P\in\calP$ is path-connected, which we can guarantee by adding suitable paths to $X$ in view of uniform coarse connectedness. Let $x,y\in X$ and consider a geodesic $[x,y]$ in $\hat{X}$ connecting them. Define $\eta(x,y)$ by substituting each $\calP-$components of $[x,y]$ with paths in the corresponding $P\in\calP$. Also, let $trans(x,y)$ be $[x,y]\backslash \bigcup \mathring{e_{p,q}}$, where the union is taken over all $\calP-$components of $[x,y]$. We want to show that Theorem \ref{guessgeod} applies.
\par
We note the following easy result.
\begin{lemma}
 For each $L$ there exists $K_0$ with the following property. If $\g$ is an $L-$quasi-geodesic standard path so that all its $\calP-$components have length bounded by $L$, then for all $x,y\in \g\cap X$ we have $d_{X}(x,y)\leq K_0 d_{\hat{X}}(x,y)+K_0$.
\end{lemma}
Condition $1)$ follows from the BCP property applied to a geodesic from $x$ to $y$ and a trivial path, together with the lemma. Condition $2)$ in the present setting follows from condition $3)$. In fact, it is enough to consider a polygon in $\hat{X}$ consisting of $[x',y']$, a suitable subpath of $[x,y]$, possibly trivial geodesics in $X$ of length at most $K$, where $K$ is so that all $P\in\calP$ are $K-$coarsely connected, and possibly trivial $\calP-$components.
\par
 Let us now show $3)$ (the containment is obvious in the $d_{\hat{X}}-$metric, not so in the $d_X-$metric). The proof will also show that given $L$-quasi-geodesic standard paths without backtracking $\g_0,\g_1$ with common endpoints we have a bound depending on $L$ only on $d_{Haus}^X(\g_0\cap X,\g_1\cap X)$. Consider a geodesic triangle $\g_0,\g_1,\g_2$. We can form an $L-$quasi-geodesic standard path without backtracking $\alpha$ with the same endpoints as $\g_0$ by concatenating subpaths $\g'_1,\g'_2$ of $\g_2$ and a suitable geodesic $\g$ so that $d(\g_0,\g)>1$, where $L$ depends on the hyperbolicity constant of $\hat{X}$ only. If $\g$ contains a sufficiently long $P-$component $e_{p,q}$, then $p,q$ are close to endpoints of a $P-$component of $\alpha$, and hence of either a $P-$component of $\g_1$ or a $P-$component of $\g_2$, as $\g_0$ and $\g$ do not have tied $\calP-$components.
\par
The proof of $3)$ is then readily completed given the following lemma.
\begin{lemma}
 For each $L$ there exists $K$ with following property. Let $\alpha_0, \alpha_1$ be $L-$quasi-geodesic standard paths without backtracking and with corresponding endpoints at distance at most $L$. Also, suppose that all $\calP-$components of $\alpha_0$ and $\alpha_1$ have length at most $L$.
Then for each $x\in \alpha_0\cap X$ we have $d_X(x,\alpha_0\cap X)\leq K$.
\end{lemma}

\begin{proof}
There exists $K_0$ depending on $L$ and the hyperbolicity constant of $\hat{X}$ with the following property. First, if $x,y\in \g_0$ then $d_{X}(x,y)\leq K_0 d_{\hat{X}}(x,y)+K_0$. Also, for $x$ as in the statement, we can find a geodesic $\g$ in $\hat{X}$ of length at most $K_0$ connecting $x'\in\g_0$ to $\g_1$, where $d_X(x',x)\leq K_0$, and so that the concatenation $\alpha$ of an initial subpath of $\alpha_0$, $\g$ and a final subpath of $\alpha_1$ is a $K_0-$quasi-geodesic standard path with no backtracking. By the BCP-property we see that there is a bound $K_1$ on the lengths of the $\calP-$components of $\alpha$, which can in turn be used, together with $l(\g)\leq K_0$, to give a bound of $d_X(x,\g_1\cap X)$.
\end{proof}

Condition $4)$ is clear. Property $(\alpha_1)$ can be shown as follows. If $x,y\in N_R(P_0)\cap N_R(P_1)$ for $P_0,P_1\in\calP$ then we can construct an $L-$quasi-geodesic standard path $\g_i$ without backtracking from $x$ to $y$, where $L$ depends on $R$ only, concatenating a geodesic in $X$ from $x$ to $P_i$, a $P_i-$component and a geodesic in $X$ from $P_i$ to $y$. The BCP property implies that such components have bounded length, which then easily gives $(\alpha_1)$.
\par
In the setting of $6)$, we can construct an $L-$quasi-geodesic standard path without backtracking $\g$ from $x$ to $y$, where $L$ depends on $k$ only, by concatenating a geodesic in $X$ from $x$ to $P$, a $P-$component and a geodesic in $X$ from $P$ to $y$. Recall that the proof of $3)$ gives that quasi-geodesic standard paths without backtracking $\g_0,\g_1$ with common endpoints have the property that $\g_0\cap X$ and $\g_1\cap X$ are at bounded Hausdorff distance. In order to conclude the proof of $6)$ we just need to use this fact with $\g_0$ a geodesic from $x$ to $y$ and $\g_1=\g$.
\end{proof}

\section{Weak $(*)-$ATG suffices}
In this section we show that in our main definition of relative hyperbolicity we can just require control on geodesics rather than $(1,C)-$almost-geodesics.

\begin{prop}[Characterization (RH0)]
\label{rh0}
Let $X$ be a geodesic metric space and let $\calP$ be a subsets of $X$. Then $(X,\calP)$ is relatively hyperbolic $\iff$ $\calP$ satisfies $(\alpha_1)$ and $(\alpha_2)$ and $X$ is weakly $(*)-$asymptotically tree-graded with respect to $\calP$.
\end{prop}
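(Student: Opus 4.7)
The forward direction is immediate: every geodesic is a $(1,0)$-almost-geodesic, so $(*)$-asymptotic tree-gradedness implies its weak version. All the content is in the reverse direction.

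For $\Leftarrow$, the plan is to deduce (RH3) from (RH0) and invoke the already-established $\Leftarrow$ of Proposition \ref{chartrans}. Condition (1) of (RH3) is $(\alpha_1)$, given. For condition (2), I would fix $\mu$ much larger than the constant $M$ coming from $(\alpha_2)$ and show that if $d(x,P),d(y,P)\le k$ and $d(x,y)$ is sufficiently large, then every point of $[x,y]$ outside a bounded neighborhood of $\{x,y\}$ is deep in $N_\mu(P)$. The argument is a bootstrap of $(\alpha_2)$: any subgeodesic of $[x,y]$ whose endpoints are $\epsilon$-close to $P$ must meet $N_M(P)$, and iterating on maximal escape-subarcs traps the middle of $[x,y]$ inside $N_\mu(P)$. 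This localizes transient points near $x$ and $y$. The existence of a transient point in $N_\mu(P)$ then comes from examining the first deep pocket encountered from $x$: its opening endpoint lies in $N_\mu(P)$ and is transient by definition.

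The core is condition (3), the Relative Rips condition. I would fix a geodesic triangle $\gamma_0\cup\gamma_1\cup\gamma_2$ and a transient point $p\in\gamma_0$, and apply weak $(*)$-ATG to a subtriangle adapted to $p$ (for instance, the geodesic triangle whose sides are $\gamma_0|_{[v,p]}$, a geodesic $[p,v']$, and the corresponding portion of $\gamma_1\cup\gamma_2$, for an appropriate vertex $v'$). In the $(C)$-alternative, $p$ lies within $\sigma$ of points on the other sides; I would argue via $(\alpha_1)$ and the deep-pocket structure established in Step 2 that these nearby points are themselves transient — otherwise both would sit in deep pockets around distinct peripheral sets whose $K$-neighborhoods have large intersection, contradicting $(\alpha_1)$. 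In the $(P)$-alternative, a peripheral set $P'$ is $\sigma$-close to all three sides with entrance/exit points $x_i,y_i$ matched by $d(y_i,x_{i+1})\le\delta$; since $p$ is transient it is not deep in $P'$, so Step 2 forces $p$ to be within a uniform distance of either $x_0$ or $y_0$, and the $\delta$-matching then pushes $p$ close to a transient entrance/exit point on another side. An induction on the length of the sides (or equivalently, iteration of the dichotomy on subtriangles) handles transient points $p$ lying far from every vertex.

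The principal obstacle is the $(C)$-alternative: weak $(*)$-ATG merely produces metric closeness, not closeness of transient subsets, and one must rule out the possibility that a point nearby on $\gamma_1$ or $\gamma_2$ sits deeply inside a peripheral pocket that $p$ does not share. This is where the interaction of $(\alpha_1)$ with the Step 2 structure theorem — which says deep regions on a geodesic are entirely contained in a uniform neighborhood of a single peripheral set — is crucial, since sharing a peripheral pocket is the only way transience can fail. A secondary bookkeeping issue is propagating constants through the subtriangle iteration so that the final $D=D(R)$ in (RH3) depends only on $R$ and the data of (RH0); this is routine but requires care.
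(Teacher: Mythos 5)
Your overall architecture is the same as the paper's: the forward direction is trivial, and for the converse you aim to verify (RH3) and invoke Proposition \ref{chartrans}. Your treatment of condition (2) is essentially right (one caveat: the opening endpoint of the first deep pocket is \emph{not} transient ``by definition'' --- it could a priori be deep with respect to a different peripheral set whose pocket overlaps; ruling this out needs $(\alpha_1)$ together with the quasi-convexity of $N_\mu(P)$ from Lemma \ref{trh0} and a large enough choice of $R_0$, as the paper arranges). The real problem is condition (3), where your argument has a genuine gap. First, the ``subtriangle'' you propose has as one side ``the corresponding portion of $\gamma_1\cup\gamma_2$,'' which is a concatenation of two geodesics, not a geodesic, so weak $(*)$-ATG does not apply to it. Second, and more seriously, your resolution of the $(C)$-alternative does not work: $p$ is \emph{transient}, so it does not sit in any deep pocket, and if the nearby point $q\in\gamma_1$ sits in a deep pocket around some $P'$ there is no configuration of ``two deep pockets around distinct peripheral sets with large overlapping neighborhoods'' to play against $(\alpha_1)$. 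The statement you actually need is that $p$ is then uniformly close to an \emph{endpoint} of $q$'s pocket (which is transient), and nothing in your sketch delivers that.

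What is missing is the key comparison lemma that both alternatives of your dichotomy funnel into: if two geodesics share one endpoint and their other endpoints are $K$-close, then their transient sets are at Hausdorff distance bounded in terms of $K$; equivalently, entrance and exit points in $N_\mu(P)$ are stable under bounded perturbation of endpoints. This is the heart of the paper's proof, and it is not a consequence of a formal iteration of the $(C)/(P)$ dichotomy: the paper extracts it from weak $(*)$-ATG by first constructing coarse closest-point projections $\pi_P$ and proving, via careful case-$(C)$/case-$(P)$ analyses of triangles with a projection point as a vertex, that geodesics from $x$ into $N_M(P)$ pass uniformly close to $\pi_P(x)$ (Lemmas \ref{proj1:lem}, \ref{proj2:lem}, Corollary \ref{firstpoint:cor}). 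Only after using these to match up the peripheral excursions of the two geodesics does it reduce to geodesics that penetrate no $N_\mu(P)$ deeply, where the transient set is essentially the whole geodesic and a direct thinness argument applies. Your ``induction on the length of the sides'' gestures at this but, without the projection machinery, the iteration neither visibly terminates nor yields a constant $D$ depending only on $R$ and the (RH0) data. As written, the proposal identifies the obstacle correctly but does not overcome it.
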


\begin{proof}
 It suffices to show that (RH0) implies (RH3).
We will use the following lemmas, that are proven in \cite{DS-treegr} and \cite{Si-proj} for relatively hyperbolic spaces.
In the following lemmas, $(X,\calP)$ is assumed to satisfy (RH3) and for $P\in\calP$ and $x\in X$ we denote by $\pi_P(x)$ a point so that $d(x,\pi_P(x))\leq d(x,P)+1$. Let $M$ be as in $(\alpha_2)$.

\begin{lemma}
\label{trh0}
There exists $t$ so that for each $L\geq 1$ any geodesic connecting points in $N_L(P)$, for some $P\in\calP$, is contained in $N_{tL}(P)$.
\end{lemma}

\begin{proof}
Fix $\epsilon$ and $L\geq 1$ as in $(\alpha_2)$. We can assume $L> M$. Let $\g$ be a geodesic as in the statement and consider a subgeodesic $[x,y]$ of $\g$ so that $\g'\cap N_L(P)=\{x,y\}$. We clearly cannot have $l(\g')\geq L/\epsilon$, which easily implies the lemma.
\end{proof}

\begin{lemma}\label{proj1:lem}
There exists $R$ such that each geodesic from $x$ to $N_M(P)$ intersects $N_R(\pi_P(x))$, for each $x\in X$ and $P\in\calP$.
\end{lemma}

\begin{proof}
For convenience, let us assume $M\geq \sigma$, for $\sigma$ as in $(P)$. Let $t$ be as in the previous lemma and $B$ be a uniform bound on the diameters of $N_{tM}(P_1)\cap N_{tM}(P_2)$ for $P_1,P_2\in\calP$, $P_1\neq P_2$. Set
$$R=\max\{tM+4\sigma+1, B+tM+3\delta+1, M+\delta+1\}.$$
Set $p=\pi_P(x)$ and consider some $y\in N_M(P)$ and a geodesic $\gamma$ from $x$ to $y$. Suppose that a geodesic triangle with vertices $x,y,p$ and such that $\gamma$ is one of its sides falls under case $(C)$.
Then we have points $a\in [x,p]$, $b\in [p,y]$ and $c\in\gamma$ with reciprocal distances at most $2\sigma$. We have that $d(a,p)\leq tM+2\sigma+1$, for otherwise
$$d(x,P)\leq d(x,a)+d(a,b)+d(b,P)< d(x,p)-tM-2\sigma+2\sigma+tM\leq d(x,P).$$
In particular $d(c,p)\leq d(c,a)+d(a,p)\leq tM+4\sigma+1$.
\par
Let us then consider a geodesic triangle with vertices $x,y,p$ such that $\gamma$ is one of its sides which falls under case $(P)$. Consider $A\in\calP$ and $x_1,x_2,y_1,y_2,p_1,p_2$ as in case $(P)$, where, e.g., $p_2,y_1\in[p,y]$ and $p_2$ is closer to $p$. Notice that $d(p,p_1)\leq tM+\delta+1$, for otherwise we would have
$$d(x,P)\leq d(x,p_1)+d(p_1,p_2)+d(p_2,P)< d(x,P).$$
If $d(p_2,y_1)\leq B$, we have
$$d(y_2,p)\leq d(y_2,y_1)+d(y_1,p_2)+d(p_2,p_1)+d(p_1,p)\leq $$
$$\delta+B+\delta+tM+\delta+1=B+tM+3\delta+1\leq R.$$

If $d(p_2,y_1)>B$ we have $A=P$. In this case, $d(x_1,p)\leq M+1$, because
$$d(x,P)\leq d(x,x_1)+d(x_1,P)\leq d(x,p)-d(x_1,p)+M\leq d(x,P)+1+M-d(x_1,p).$$

Therefore $d(x_2,p)\leq d(x_2,x_1)+d(x_1,p)\leq M+1+\delta\leq R$.
\end{proof}


Notice that $R\geq M$.

\begin{lemma}\label{proj2:lem}
There exists $L$ such that for each $x,y\in X$, $P\in\calP$, if $d(\pi_P(x),\pi_P(y))\geq L$, then any geodesic from $x$ to $y$ intersects $B_L(\pi_P(x))$ and $B_L(\pi_P(x))$.
\end{lemma}

\begin{proof}
Once again, let $B$ be a uniform bound on the diameters of $N_{tM}(P_1)\cap N_{tM}(P_2)$ for $P_1,P_2\in\calP$, $P_1\neq P_2$. Set
$$L=\max\{tR+4\sigma+1,2R+2\sigma+2, B+tR+3\delta+1, R, 2R+\delta+2\}.$$
Set $\hat{x}=\pi_P(x)$ and $\hat{y}=\pi_P(y)$. Consider a geodesic $\gamma$ from $x$ to $y$. Suppose that we have a case $(C)$ geodesic triangle with vertices $x,\hat{y},y$ containing $\gamma$. Therefore, we have points $a\in[x,\hat{y}]$, $b\in[\hat{y},y]$, $c\in\gamma$ with reciprocal distances at most $2\sigma$.
Use the previous lemma to find $p\in [x,\hat{y}]$ such that $d(\hat{x},p)\leq R$. If $a\in[p,\hat{y}]$, we have
$$d(y,P)\leq d(y,b)+d(b,a)+d(a,P)\leq d(y,P)-d(b,\hat{y})+2\sigma+tR+1,$$
therefore $d(b,\hat{y})\leq tR+2\sigma+1$. In this case $d(c,\hat{y})\leq tR+4\sigma+1\leq L$. On the other hand, if $a\in[x,p]$ then $d(\hat{y},a)=d(p,a)+ d(p,\hat{y})\geq d(p,a)+L-R$. Hence $d(\hat{y},b)\geq d(p,a)+L-R-2\sigma$ and
$$d(y,P)\leq d(y,\hat{y})-d(\hat{y},b)+d(b,a)+d(a,p)+d(p,\hat{x})\leq$$
$$ d(y,P)+1-d(a,p)-L+R+2\sigma+d(a,p)+R=d(y,P)+1+2R+2\sigma-L ,$$
which implies $L\leq 2R+2\sigma+1$, a contradiction.
\par
Suppose we have a case $(P)$ geodesic triangle as above. Consider $x_1,x_2,\hat{y}_1,\hat{y}_2,y_1,y_2$ and $A\in\calP$ as in case $(P)$ (where, e.g., $[x_1,\hat{y}_2]\in[x,\hat{y}]$ and $x_1$ is closer to $x$). Let $p$ be as above, and suppose first that $\hat{y}_2\in [p,\hat{y}]$. Proceeding as above we get $d(\hat{y}_1,\hat{y})\leq tR+\delta+1$ and hence $d(\hat{y},\hat{y}_2)\leq tR+2\delta+1$. If $d(\hat{y}_2,x_1)\leq B$, then
$$d(x_2,\hat{y})\leq d(x_2,x_1)+d(x_1,\hat{y}_2)+d(\hat{y}_2,\hat{y})\leq B+tR+3\delta+1.$$
Otherwise $A=P$. In this case, by Lemma \ref{proj1:lem}, there is a point on $z\in[y,y_2]\subseteq \gamma$ such that $d(z,\hat{y})\leq R\leq L$.
\par
We are only left to prove that the case $\hat{y}_2\in [x,p]$ is impossible. In fact, doing the estimates as above, we obtain $d(\hat{y},\hat{y}_1)\geq d(\hat{y}_2,p)+L-R-\delta$ and
$$d(y,P)\leq d(y,\hat{y})-d(\hat{y},\hat{y}_1)+d(\hat{y}_1,\hat{y}_2)+d(\hat{y}_2,p)+d(p,\hat{x})\leq d(y,P)+2R+\delta+1-L.$$
Hence $L\leq 2R+\delta+1$, a contradiction.
\end{proof}

 \begin{cor}\label{firstpoint:cor}
 For each $\mu\geq L$ there exists $R'$ with the following property. If $\gamma$ is a geodesic starting from $x$ and $p\in\gamma$ is the first point in $N_\mu(P)$ for some $P\in\calP$, then $d(p,\pi_P(x))\leq R'$.
 \end{cor}

 \begin{proof}
We have $d(p,\pi_P(p))\leq \mu+1$. Suppose $\mu>L$, and otherwise use the following argument to a point just before $p$ on $\gamma$. If we had $d(\pi_P(p),\pi_P(x))\geq L$ then we would have $[x,p]\cap N_L(P)\neq\emptyset$. As this is not the case, we have $d(\pi_P(p),\pi_P(x))< L$ and hence $d(p,\pi_P(x))\leq L+\mu+1$.
 \end{proof}

\begin{lemma}
\label{enhmrh0}
 Let $(X,\calP)$ be relatively hyperbolic. Then there exists $K$ so that the following holds. Suppose $x,y\in X$, $P\in\calP$ are so that $d(x,P)+d(y,P)\leq d(x,y)-K$. Then there exists $z\in[x,y]\cap N_{K}(P)$ with $|d(x,z)-d(x,P)|\leq K$.
\end{lemma}

\begin{proof}
It follows from the previous lemma that if $d(\pi_P(x),\pi_P(y))$ is large enough then any geodesic from $x$ to $y$ intersects balls of uniformly bounded radius around the projection points. Up to increasing $K$, we can make sure that the said condition holds.
\end{proof}

Property $2)$ follows directly from Lemma \ref{enhmrh0}. Let us now show $3)$, with $\mu=\max\{L,\sigma\}$ and $R_0$ large enough so that the following conditions are satisfied. First, we want that, for each geodesic $\g$, $deep_{\mu,R_0}$ is a disjoint union of subpaths each contained in $N_{t\mu}(P)$ for some $P\in\calP$. Also, we require that, for each geodesic $\g$ and $P\in\calP$, the entrance point of $\g$ in $N_{\mu}(P)$ is in $trans_{\mu,R_0}(\g)$ if $diam(\g\cap N_{\mu}(P))\geq 2R_0$. Both conditions can be arranged in view of $(\alpha_1)$ and Lemma \ref{trh0}. Fix $R\geq R_0$. An easy argument based on weak $(*)-$asymptotic tree-gradedness shows that it is enough to prove that for each $K$ there exists $D_1=D_1(K)$ so that whenever $[x,y],[x,y']$ are geodesics satisfying $d(y,y')\leq K$ then $trans_{\mu,R}([x,y])\subseteq N_{D_1}(trans_{\mu,R}([x,y']))$. It is easy to show that each $\pi_P$ is coarsely Lipschitz, see \cite[Lemma 2.4]{Si-contr}. Then, by Lemma \ref{proj2:lem} and Corollary \ref{firstpoint:cor}, we have that if $p\in [x,y]$ is the entrance point of $[x,y]$ in $N_{\mu}(P)$ (or the exit point) and $diam(N_{\mu}(P)\cap [x,y])\geq Q$, for some suitable $Q$, then $diam([x,y']\cap N_{\mu}(P))\geq 2R_0$ and the entrance point of $[x,y']$ in $N_{\mu}(P)$ is within bounded distance from $p$.
\par
Hence, we can further reduce $3)$ to the following. For each $K$ there exists $D_2=D_2(K)$ so that whenever $[x,y],[x',y']$ are geodesics satisfying $d(x,x'),d(y,y')\leq K$ and so that for each $P\in\calP$ we have $diam(N_{\mu}(P)\cap[x',y'])\leq Q$ then $trans_{\mu,R}([x,y])\subseteq N_{D_1}(trans_{\mu,R}([x',y']))$. Also, we can just show $[x,y]\subseteq N_{D_1}([x',y'])$, as with our hypothesis $trans_{\mu,R}([x',y'])$ is within bounded Hausdorff distance from $[x',y']$. Pick $p\in[x,y]$ and consider a geodesic triangle with vertices $x',y',p$. In view of our hypothesis, all sides of the said triangle intersect a ball of radius bounded by some $\sigma'$. Consider $x_1,y_1$ in, respectively, $[x',p]$ and $[p,y']$ with $d(x_1,y_1)\leq 2\sigma'$ and $d(y_1,[x',y'])\leq 2\sigma'$. Notice that
$$d(x',y')+4K \geq d(x',p)+d(p,y')=d(x',x_1)+d(x_1,p)+d(p,y_1)+d(y_1,y')\geq$$
$$ d(x',y')-4\sigma'+d(p,y_1),$$
so we get a bound on $d(p,y_1)$ in terms of $K$ and $\sigma'$, and hence a bound on $d(p,[x',y'])$, as required.
\end{proof}

The following fact will be used in \cite{S-compmap} and its proof is very similar to the previous one. Roughly speaking, the content is that quasi-geodesics with the same endpoints have the same transient set, up to finite Hausdorff distance.

\begin{prop}(Cfr. \cite[Proposition 8.14]{Hr-relqconv})
\label{transqgeod}
 Fix a relatively hyperbolic space $(X,\calP)$. For any $L,C$ there exist $\mu, R, M$ so that for each continuous $(L,C)-$quasi-geodesics $\delta,\gamma$ with the same endpoints we have
$$d_{Haus}(trans_{\mu,R}(\delta), trans_{\mu,R}(\gamma))\leq M.$$
Furthermore there exists $t\geq 1$ so that
\begin{enumerate}
 \item for every continuous $(L,C)-$quasi-geodesic $\gamma$, $deep_{\mu,R}(\delta)$ is a disjoint union of subpaths each contained in $N_{t\mu}(P)$ for some $P\in\calP$,
 \item for every $P\in\calP$ the entrance point of $\g$ in $N_{\mu}(P)$ is in $trans_{\mu,R}(\g)$ if $diam(\g\cap N_{\mu}(P))\geq 2R$.
\end{enumerate}
\end{prop}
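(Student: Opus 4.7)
My plan is to follow the scheme of the proof of Proposition \ref{rh0} as closely as possible, lifting the key projection lemmas from geodesics to continuous $(L,C)$-quasi-geodesics, and then running the argument that concluded that proof to obtain the Hausdorff bound.

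First I would establish $(L,C)$-almost-geodesic analogues of Lemma \ref{trh0}, Lemma \ref{proj1:lem}, Lemma \ref{proj2:lem} and Corollary \ref{firstpoint:cor}. The cleanest route is via the Bowditch space: since $Bow(X)$ is hyperbolic (Proposition \ref{rh1}) and since the $Bow(X)$-metric and $d_X$ are coarsely equivalent on $X$, a continuous $(L,C)$-quasi-geodesic $\gamma$ in $X$ is also a continuous $(L',C')$-quasi-geodesic in $Bow(X)$, with $L',C'$ depending only on $L,C$ and the chosen model of $Bow(X)$. The Morse lemma in $Bow(X)$ then gives that $\gamma$ is Hausdorff-close in $Bow(X)$ to any $Bow(X)$-geodesic with the same endpoints, and Corollary \ref{rh1geod} transports this back to $X$. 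Each of the four statements — staying close to a peripheral when the endpoints do, hitting a bounded neighborhood of the projection, forcing entry into the peripheral when projections are far apart, and the entrance point being close to the projection — then follows for $\gamma$ exactly as in the geodesic case, at the cost of worse constants depending on $L,C$.

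Once these lifted lemmas are in hand, the choices of $\mu,R$ and $t$ and the verification of clauses (1) and (2) of the proposition proceed exactly as in the corresponding portion of the proof of Proposition \ref{rh0}. Take $\mu$ at least as large as the $L$ from the lifted Lemma \ref{proj2:lem} and $R\geq R_0$ large enough that (using $(\alpha_1)$ and the lifted Lemma \ref{trh0}) for every continuous $(L,C)$-quasi-geodesic $\gamma$ the set $deep_{\mu,R}(\gamma)$ decomposes as a disjoint union of subpaths each trapped in some $N_{t\mu}(P)$; property (2) — that the entrance point of $\gamma$ in $N_\mu(P)$ lies in $trans_{\mu,R}(\gamma)$ once the dwell-diameter exceeds $2R$ — then follows from the lifted Corollary \ref{firstpoint:cor}, together with the controlled diameter of $N_\mu(P)\cap N_\mu(P')$ given by $(\alpha_1)$.

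For the main assertion, fix two continuous $(L,C)$-quasi-geodesics $\gamma,\delta$ with common endpoints $x,y$ and let $p\in trans_{\mu,R}(\gamma)$. Clause (1) structures $\gamma$ and $\delta$ as alternating sequences of transient arcs and deep arcs, each deep arc associated to a peripheral $P\in\calP$. The lifted Lemma \ref{proj2:lem} together with clause (2) forces the deep arcs of $\gamma$ and $\delta$ to match up peripheral-by-peripheral, with matched entrance and exit points pairwise at bounded distance, both being within a uniform distance of $\pi_P(x')$ and $\pi_P(y')$ for the appropriate pair $x',y'$. Between two consecutive matched deep stretches one is in the situation of the ``$D_2$ reduction'' at the end of the proof of Proposition \ref{rh0}: inserting a geodesic between matched endpoints and applying $(\alpha_2)$ and the Morse-type triangle argument given there yields $d(p, trans_{\mu,R}(\delta))\leq M$ for some uniform $M=M(L,C)$. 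Symmetry gives the Hausdorff bound.

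The main technical obstacle is the first step — pushing the projection statements from geodesics to $(L,C)$-quasi-geodesics with constants controllable uniformly in $L,C$. Keeping everything in a fixed model of $Bow(X)$ makes the dependencies manageable, since Morse stability in $Bow(X)$ depends only on the hyperbolicity constant and the quasi-geodesic constants, and the conversion constants in Corollary \ref{rh1geod} depend only on the chosen model. After this upgrade the remainder is a direct transcription of the argument at the end of the proof of Proposition \ref{rh0}.
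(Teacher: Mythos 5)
Your overall architecture matches the paper's: lift the projection/quasi-convexity lemmas to $(L,C)$-quasi-geodesics, use them to match entrance and exit points of deep components peripheral-by-peripheral, reduce to the case of bounded peripheral dwell, and finish with a thin-triangle argument as in the tail of the proof of Proposition \ref{rh0}. The problem is your first step. The claim that a continuous $(L,C)$-quasi-geodesic in $X$ is a continuous $(L',C')$-quasi-geodesic in $Bow(X)$ with $L',C'$ depending only on $L,C$ and the model is false. The restriction of $d_{Bow(X)}$ to $X$ is only \emph{coarsely} equivalent to $d_X$, not quasi-isometric: two points of a peripheral set $P$ at $d_X$-distance $n$ are at $d_{Bow(X)}$-distance roughly $2\log n$, since one can cut through the combinatorial horoball over $P$. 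Consequently an $X$-geodesic (let alone an $(L,C)$-quasi-geodesic) that travels distance $n$ along $N_\mu(P)$ has length $n$ but endpoints at $Bow(X)$-distance $O(\log n)$, so it is not a uniform quasi-geodesic in $Bow(X)$, and the Morse lemma in $Bow(X)$ cannot be applied to it. The correct transfer goes only in the other direction (Corollary \ref{rh1geod}): $Bow(X)$-geodesics, after replacing horoball excursions by paths in $X$, become quasi-geodesics in $X$. Repairing your route would require first excising the deep excursions of $\gamma$ and pushing them into the horoballs, but knowing that the deep excursions are exactly the subpaths trapped near peripherals is essentially the quasi-convexity statement you are trying to prove, so the argument as proposed is circular.

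The paper avoids this by importing the quasi-geodesic versions of the key lemmas directly: quasi-convexity of $N_d(P)$ for $(L,C)$-quasi-geodesics is \cite[Lemma 4.15]{DS-treegr} (Lemma \ref{t}) and the projection-forcing statement is \cite[Lemma 1.17]{Si-proj} (Lemma \ref{proj2qgeod}); the analogue of Corollary \ref{firstpoint:cor} then follows formally. For the final containment $\delta\subseteq N_D(\gamma)$ in the bounded-dwell case, the paper also does not use $(\alpha_2)$ plus the geodesic-triangle argument directly, but invokes \cite[Lemma 8.17, Proposition 8.16-(1)]{DS-treegr}, which give the $(*)$-asymptotic tree-gradedness condition for triangles with $(L,C)$-quasi-geodesic sides; you would need this (or an equivalent Morse-type statement for quasi-geodesic triangles in a relatively hyperbolic space) since your triangle has two sides that are sub-quasi-geodesics of $\delta$. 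With those two substitutions the rest of your outline goes through as in the paper.
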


\begin{proof}
 Similarly to the proof above, we need the following facts. Fix $L,C$.
\begin{lemma}\cite[Lemma 4.15]{DS-treegr}
\label{t}
 There exists $t$ so that for every $d\geq 1$ all $(L,C)-$quasi-geodesics with endpoints in $N_{d}(P)$, for some $P\in\calP$, are contained in $N_{td}(P)$. 
\end{lemma}

As usual, denote by $\pi_P$ a (coarse) closest point projection on $P\in\calP$.

\begin{lemma}\cite[Lemma 1.17]{Si-proj}
\label{proj2qgeod}
 There exists $\mu$ so that if $d(\pi_P(x),\pi_P(y))\geq \mu$ for some $x,y\in X$ and $P\in\calP$ then all $(L,C)-$quasi-geodesics from $x$ to $y$ intersect $B_{\mu}(\pi_P(x))$ and $B_{\mu}(\pi_P(x))$.
\end{lemma}

We also have the following, that can be obtained just as in Corollary \ref{firstpoint:cor}.

\begin{cor}
\label{firspointqgeod}
 There exists $A$ with the following property. Let $\delta$ be a continuous $(L,C)-$quasi-geodesic starting at $x$ so that $\{y\}=\delta\cap N_\mu(P)$ is the final point of $\delta$, for $\mu$ as in Lemma \ref{proj2qgeod} and some $P\in\calP$. Then $d(y,\pi_P(x))\leq A$.
\end{cor}

We are now ready for the proof. We will only show that $trans_{\mu,R}(\delta)$ is contained in a suitable neighborhood of $trans_{\mu,R}(\gamma)$, the other containment is symmetric. Let $t,\mu$ be as in the lemmas above. As usual, we use $(\alpha_1)$ and Lemma \ref{t} to choose $R$ large enough so that, for every continuous $(L,C)-$quasi-geodesic $\gamma$, $deep_{\mu,R}(\delta)$ is a disjoint union of subpaths each contained in $N_{t\mu}(P)$ for some $P\in\calP$. Also, we require that, for each continuous $(L,C)-$quasi-geodesic $\gamma$ and $P\in\calP$, the entrance point of $\g$ in $N_{\mu}(P)$ is in $trans_{\mu,R}(\g)$ if $diam(\g\cap N_{\mu}(P))\geq 2R$.
\par
By Lemma \ref{proj2qgeod} and Corollary \ref{firspointqgeod}, we have that if $p\in \gamma$ is the entrance point of $\gamma$ in $N_{\mu}(P)$ (the exit point behaves similarly) and $diam(N_{\mu}(P)\cap \gamma)\geq Q$, for some suitable $Q$, then $diam(\delta\cap N_{\mu}(P))\geq 2R$ and the entrance point of $\delta$ in $N_{\mu}(P)$ is within bounded distance from $p$.
\par
Hence, by passing to subpaths of $\gamma,\delta$ connecting exit and entrance points in neighborhoods of peripheral sets, we see that it suffices to show the following statement:
\par
For each $K$ there exists $D=D(K)$ so that whenever $\gamma,\delta$ are continuous $(L,C)-$quasi-geodesics connecting $x,y$ and $x',y'$ respectively, where $d(x,x'),d(y,y')\leq K$, and for each $P\in\calP$ we have $diam(N_{\mu}(P)\cap \gamma),diam(N_{\mu}(P)\cap \gamma)\leq Q$ then
$$trans_{\mu,R}(\delta)\subseteq N_{D}(trans_{\mu,R}(\gamma)).$$
\par
Also, we can just show $\delta\subseteq N_{D}(\gamma)$, as with our hypothesis $trans_{\mu,R}(\gamma)$ is within bounded Hausdorff distance from $\gamma$.
Pick $p\in \delta$. Let $\delta_1, \delta_2$ be the sub-quasi-geodesics of $\delta$ with endpoints $x',p$ and $y',p$ respectively. Consider the quasi-geodesic triangle with sides $\gamma, \delta_1, \delta_2$ (as we regard it as a quasi-geodesic triangle it does not matter that the endpoints do not coincide as they are within distance $K$). In view of our hypothesis and \cite[Lemma 8.17,Proposition 8.16-$(1)$]{DS-treegr} (which essentially give $(*)-$asymptotic-tree-gradedness for quasi-geodesic triangles), all sides of the said triangle intersect a ball of radius, say, $\sigma$. Consider $x_1,y_1$ in, respectively, $\delta_1$ and $\delta_2$ with $d(x_1,y_1)\leq 2\sigma$ and $d(y_1,\gamma)\leq 2\sigma$. As $x_1,y_1$ are on a quasi-geodesic on opposite sides of $p$ and their distance is bounded, we can also bound $d(y_1,p)$, and hence we get a bound on $d(p,\gamma)$.
\end{proof}

\section{Applications}
\subsection{The Bestvina-Bromberg-Fujiwara construction}

Let $\bf{Y}$ be a set and for each $Y\in\bf{Y}$ let $\calC(Y)$ be a geodesic metric space.
For each $Y$ let $\pi_Y:{\bf Y}\backslash\{Y\}\to\calP(\calC(Y))$ be a function (where $\calP(Y)$ is the collection of all subsets of $Y$). The authors of \cite{BBF} defines a function $d^{\pi}_Y$ so that
$$|d^{\pi}_Y(X,Z)-diam\{\pi_Y(X)\cup\pi_Y(Z)\}|\leq 2\xi,$$
whose exact definition we do not need.
Using the enumeration in \cite{BBF}, consider the following Axioms:
\begin{itemize}
	\item[(0)] $diam(\pi_Y(X))<+\infty$;
	\item[(3)] There exists $\xi$ so that $\min\{d^{\pi}_Y(X,Z),d^{\pi}_Z(X,Y)\}\leq \xi$;
	\item[(4)] There exists $\xi$ so that $\{Y:d^\pi_Y(X,Z)\geq \xi\}$ is a finite set for each $X,Z\in\bf{Y}$.
\end{itemize}

For a suitably chosen constant $K$, let $\calC(\{(\calC(Y),\pi_Y)\}_{Y\in\bf{Y}})$ be the path metric space consisting of the union of all $\calC(Y)$'s and edges of length 1 connecting all points in $\pi_X(Z)$ to all points in $\pi_Z(X)$ whenever $X,Z$ are connected by an edge in the complex $\calP_K(\{(\calC(Y),\pi_Y)\}_{Y\in\bf{Y}})$ whose vertex set is ${\bf Y}$ and $X,Z\in{\bf Y}$ are connected by an edge if and only if for each $Y\in{\bf Y}\backslash\{X,Z\}$ we have $d^{\pi}_Y(X,Z)\leq K$. An essential feature of this construction is the following observation from \cite{BBF}.
\begin{rem}
 When there is a group action on the union of the $\calC(Y)$'s compatible with the maps $\{\pi_Y\}$, there is an induced action on $\calC(\{(\calC(Y),\pi_Y)\}_{Y\in\bf{Y}})$.
\end{rem}

The following is proven in \cite{BBF}.

\begin{thm}[{\cite[Theorem 3.15]{BBF}}]\label{bbf}
 If $\{(\calC(Y),\pi_Y)\}_{Y\in\bf{Y}}$ satisfies Axioms $(0), (3)$ and $(4)$ and each $\calC(Y)$ is $\delta-$hyperbolic then $\calC(\{(\calC(Y),\pi_Y)\}_{Y\in\bf{Y}})$ is hyperbolic.
\end{thm}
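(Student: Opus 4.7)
The plan is to apply the Guessing Geodesics Lemma for hyperbolic spaces (Proposition \ref{guessgeodhyp}) to the space $\calC = \calC(\{(\calC(Y),\pi_Y)\}_{Y\in\mathbf{Y}})$ by constructing for each pair $x,y$ a candidate path $\eta(x,y)$ that threads through the finitely many ``active'' coordinate spaces separating $x$ from $y$.

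First I would fix a large threshold $K_0 \gg \max\{K,\xi,\delta\}$ and define the \emph{active set} $\calA(x,y)\subseteq\mathbf{Y}$ of those $Y$ (including the host spaces of $x$ and $y$) with $d^{\pi}_Y(x,y)\geq K_0$. Axiom $(4)$ ensures $\calA(x,y)$ is finite, and Axiom $(3)$ gives (up to bounded ambiguity) a total order $Y_1\prec\cdots\prec Y_n$ on $\calA(x,y)$: declare $Y\prec Y'$ iff $d^{\pi}_{Y'}(x,Y)\leq \xi$, equivalently (by $(3)$) iff $d^{\pi}_{Y}(y,Y')\leq\xi$. I then define $\eta(x,y)$ as the concatenation of a $\calC(Y_1)$-geodesic from $x$ (or $\pi_{Y_1}(x)$) to $\pi_{Y_1}(Y_2)$, a unit edge to $\pi_{Y_2}(Y_1)\subseteq\calC(Y_2)$, then a $\calC(Y_2)$-geodesic to $\pi_{Y_2}(Y_3)$, and so on, finishing at $y$ inside $\calC(Y_n)$. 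Note that consecutive $Y_i,Y_{i+1}$ are joined by an edge in $\calP_K(\cdot)$ provided $K\geq K_0$ is chosen compatibly, so $\eta(x,y)$ is a bona fide path in $\calC$.

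Next I would verify the three hypotheses of Proposition \ref{guessgeodhyp}. Condition $(1)$ is routine: if $d(x,y)\leq 1$ then $\calA(x,y)$ consists of at most the host coordinate (by Axiom $(0)$ and the choice of $K_0$), so $\eta(x,y)$ has bounded diameter. Condition $(2)$ (Hausdorff stability under taking subpaths) follows by showing that if $x',y'\in\eta(x,y)$ then $\calA(x',y')$ agrees up to bounded error with the ordered subsequence of $\calA(x,y)$ between $x'$ and $y'$; the $\delta$-hyperbolicity of each $\calC(Y_i)$ ensures that the geodesic pieces glue well, and Axiom $(3)$ prevents any new active coordinate from appearing. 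Condition $(3)$, the slim-triangle property, is where the real work lies: given $x,y,z$, I would compare the three ordered active sets $\calA(x,y),\calA(x,z),\calA(z,y)$ and argue that every $Y\in\calA(x,y)$ is either present (with close pivots) in $\calA(x,z)\cup\calA(z,y)$, or else Axiom $(3)$ forces $\pi_Y(z)$ to lie within $O(\xi)$ of one of $\pi_Y(x),\pi_Y(y)$, so that the $\calC(Y)$-segment of $\eta(x,y)$ is shadowed by a segment on one of $\eta(x,z),\eta(z,y)$.

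The main obstacle will be Condition $(3)$, because one has to handle simultaneously the combinatorics of three finite ordered sets indexed by $\mathbf{Y}$ and the slim-triangle geometry inside each $\delta$-hyperbolic $\calC(Y)$. The key mechanism is the Behrstock-type inequality encoded in Axiom $(3)$: if $Y$ is active for $(x,y)$ but inactive for both $(x,z)$ and $(z,y)$, then both $d^{\pi}_Y(x,z)$ and $d^{\pi}_Y(z,y)$ are bounded, forcing $\pi_Y(z)$ to lie near the $\eta(x,y)$-geodesic within $\calC(Y)$, and hence one of $\eta(x,z),\eta(z,y)$ must pass within a uniform distance of this segment. Once $(1)$--$(3)$ are verified with a common constant $D=D(K_0,\xi,K,\delta)$, Proposition \ref{guessgeodhyp} yields hyperbolicity of $\calC$ and, as a bonus, identifies each $\eta(x,y)$ with a quasi-geodesic at bounded Hausdorff distance from a geodesic in $\calC$.
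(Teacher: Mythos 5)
First, a point of reference: the paper does not prove Theorem \ref{bbf}. It is quoted verbatim from \cite{BBF} (their Theorem 3.15), and the author writes ``The following is proven in \cite{BBF}'' immediately before the statement. So there is no in-paper argument to compare yours against; what you have written is a reconstruction of the Bestvina--Bromberg--Fujiwara proof itself. Your strategy --- build standard paths threading through the finitely many coordinates $Y$ with $d^{\pi}_Y$ large, order them, and feed the resulting family into a guessing-geodesics criterion such as Proposition \ref{guessgeodhyp} --- is indeed essentially the strategy of \cite{BBF}, so the outline is the right one.

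As a proof, however, the sketch defers exactly the hard steps. (i) The total order on $\calA(x,y)$ is not a formal consequence of Axiom $(3)$ as stated: showing that $\prec$ is well defined, antisymmetric and transitive, and stable under perturbing $x$ and $y$, is one of the main technical contributions of \cite{BBF} and requires first replacing $d^{\pi}_Y$ by a modified distance satisfying their ``strong projection axioms''; also note that the axioms in the paper are stated only for $X,Z\in\mathbf{Y}$, so even writing $d^{\pi}_Y(x,y)$ for points requires an extension you have not set up. (ii) Your verification of condition $(1)$ of Proposition \ref{guessgeodhyp} needs the implication that $d_{\calC}(x,y)\leq 1$ forces $\sup_Y d^{\pi}_Y(x,y)=O(1)$, i.e., a coarse lower bound for the path metric of $\calC$ in terms of every projection distance. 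Axiom $(0)$ only bounds $diam(\pi_Y(X))$ for a single coordinate and does not yield this; the lower bound (the easy half of the distance formula, proved in \cite{BBF} by showing any path from $x$ to $y$ must pass near $\pi_Y(x)$ and $\pi_Y(y)$ for each active $Y$) is also what you tacitly invoke in conditions $(2)$ and $(3)$ when you claim a segment of $\eta(x,y)$ is ``shadowed'' by a segment of $\eta(x,z)$ in the ambient metric rather than merely inside $\calC(Y)$. These gaps are fillable, but they constitute the substance of the theorem rather than routine checks, so as written the proposal is an accurate roadmap rather than a proof.
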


We can improve this result to the following.

\begin{thm}
 If $\{(\calC(Y),\pi_Y)\}_{Y\in\bf{Y}}$ satisfies Axioms $(0), (3)$ and $(4)$ then $\calC(\{(\calC(Y),\pi_Y)\}_{Y\in\bf{Y}})$ is hyperbolic relative to $\{\calC(Y)\}$.
\end{thm}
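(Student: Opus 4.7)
The plan is to use the characterization (RH1) from Proposition \ref{rh1}: it suffices to show that some Bowditch space $Bow(\calC,\{\Gamma_Y\})$ is hyperbolic, since each $\calC(Y)$ is geodesic and hence uniformly coarsely connected. The idea is to reduce hyperbolicity of the Bowditch space to Theorem \ref{bbf} (the original BBF result) applied to a modified family obtained by swapping each $\calC(Y)$ for a combinatorial horoball over it.

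Concretely, I would first fix, for each $Y \in \mathbf{Y}$, an approximation graph $\Gamma_Y$ for $\calC(Y)$ with uniformly bounded edge lengths. The combinatorial horoball $\calH(\Gamma_Y)$ is then uniformly $\delta'$-hyperbolic by the remark in the paper about horoballs being hyperbolic with constant depending only on a bound for the edge lengths. I would define new projections $\pi'_Y \co \mathbf{Y}\setminus\{Y\} \to \calP(\calH(\Gamma_Y))$ by taking $\pi'_Y(X)$ to be a set of points in $\Gamma_Y^0 \subseteq \calH(\Gamma_Y)$ at distance at most $k$ (the net parameter) from $\pi_Y(X)$.

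Next I would verify that the family $\{(\calH(\Gamma_Y),\pi'_Y)\}$ still satisfies Axioms $(0)$, $(3)$, $(4)$. Axiom $(0)$ is immediate since the $\calH(\Gamma_Y)$-diameter of $\pi'_Y(X)$ is bounded above by its $\calC(Y)$-diameter (plus a constant accounting for the net). For $(3)$ and $(4)$, the key point is that the inclusion $\Gamma_Y^0 \hookrightarrow \calH(\Gamma_Y)$ is distance non-increasing (in fact coarsely logarithmic on large distances), so the new projection distances $d^{\pi'}_Y(X,Z)$ are bounded above by $d^{\pi}_Y(X,Z)$ up to an additive constant. Since the axioms are upper-bound conditions on the projection distances, they transfer with slightly enlarged constants. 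Applying Theorem \ref{bbf} to $\{(\calH(\Gamma_Y),\pi'_Y)\}$ then yields that the modified total space $\widehat{\calC} := \calC(\{(\calH(\Gamma_Y),\pi'_Y)\})$ is hyperbolic.

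It remains to see that $\widehat{\calC}$ is within bounded Hausdorff distance (in particular quasi-isometric) to $Bow(\calC,\{\Gamma_Y\})$. Both spaces are obtained by gluing the horoballs $\calH(\Gamma_Y)$ to a common scaffolding along their bottom nets $\Gamma_Y^0$: in $\widehat{\calC}$ the scaffolding is the BBF edges of length $1$ joining $\pi'_Y(X)$ to $\pi'_X(Y)$, while in $Bow(\calC,\{\Gamma_Y\})$ it is $\calC$ itself (full $\calC(Y)$'s together with the original BBF edges). Since every point of $\calC(Y)$ is within $k$ of the net $\Gamma_Y^0$, and the induced metric from $\calH(\Gamma_Y)$ on $\Gamma_Y^0$ is coarsely equivalent to the Bowditch-metric on the net (both are coarsely logarithmic in $d_{\calC(Y)}$), the natural map $Bow(\calC) \to \widehat{\calC}$ sending each horoball by the identity and each point of $\calC(Y)$ to a nearest net point is a quasi-isometry. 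Hyperbolicity of $\widehat{\calC}$ thus implies hyperbolicity of $Bow(\calC,\{\Gamma_Y\})$, and Proposition \ref{rh1} concludes that $\calC$ is hyperbolic relative to $\{\calC(Y)\}$.

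The main obstacle is the last verification: making the comparison between $\widehat{\calC}$ and $Bow(\calC,\{\Gamma_Y\})$ precise, since one construction uses the full $\calC(Y)$'s and the other only their bottom nets inside the horoballs. The computation is essentially routine once one observes that both metrics on the net $\Gamma_Y^0$ are coarsely equivalent (both behave like $2\log d_{\calC(Y)}$ at large scale because the Bowditch path can detour through the horoball), but a little care is required to handle the BBF edges connecting projections of different $\calC(Y)$'s on both sides of the comparison.
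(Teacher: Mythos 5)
Your overall strategy --- horoball-ify the pieces, apply Theorem \ref{bbf} to the modified family, and conclude via (RH1) --- is the same as the paper's, and your verification that Axioms $(0)$, $(3)$, $(4)$ survive (they are upper-bound conditions and the projection distances only decrease) is fine. The gap is in the final comparison, which you call ``essentially routine'' but which is the only real difficulty of your variant, and it is not a within-piece issue. Because you push the projections into the horoballs, the projection distances are compressed roughly logarithmically: $d^{\pi'}_Y(X,Z)\approx 2\log d^{\pi}_Y(X,Z)$ at large scales. Consequently the projection complex changes: $\calP_K$ computed with the $\pi'_Y$ acquires edges between pairs $X,Z$ for which some original $d^{\pi}_{Y_0}(X,Z)$ is as large as roughly $e^{K/2}$. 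So $\widehat{\calC}$ and $Bow(\calC,\{\Gamma_Y\})$ are glued over \emph{different} graphs on ${\bf Y}$, and your justification for the quasi-isometry (``both metrics on the net behave like $2\log d_{\calC(Y)}$'') only addresses distances inside a single piece, not these extra edges. Lifting a $\widehat{\calC}$-geodesic that crosses such a new edge back into $Bow(\calC)$ requires rerouting it through the horoball over the obstruction $Y_0$, and hence knowing how $Y_0$ sits relative to $X$ and $Z$ in the original complex; that is genuine BBF bookkeeping (distance formula, threshold comparison), not a routine computation, and it is nowhere carried out.

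The paper sidesteps all of this with a different choice of modified pieces: it replaces each $\calC(Y)$ by the \emph{union} $\calH(\calC(Y))\cup \calC(Y)$, keeps the \emph{same} projections $\pi_Y$ (still landing in $\calC(Y)$), and takes the horoball nets \emph{coarse} --- with separation large compared to $K$ and $\xi$ --- rather than fine, so that distances at the scales relevant to the axioms and to the edge condition defining $\calP_K$ are undistorted. The BBF space of that family is then literally a Bowditch space for $\calC(\{(\calC(Y),\pi_Y)\})$, and no quasi-isometry comparison is needed. You can repair your argument by adopting this choice (retain the $\calC(Y)$'s inside the pieces and coarsen the nets); as written, the quasi-isometry between $\widehat{\calC}$ and the Bowditch space is asserted rather than proved.
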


This result probably follows from the proof of Theorem \ref{bbf}. Instead, we will use a trick to reduce this theorem to Theorem \ref{bbf}.

\begin{proof}
 It is readily checked that we can apply Theorem \ref{bbf} when substituting each $\calC(Y)$ with the union $\calH(\calC(Y))\cup Y$ of $Y$ and a combinatorial horoball along its defining net (and keeping the same $\pi_Y$). The space $\calC(\{(\calH(\calC(Y))\cup Y,\pi_Y)\})$ is a Bowditch space for $\calC(\{(\calC(Y)\cup Y,\pi_Y)\})$ if the nets chosen to construct $\calH(\calC(Y))$ are coarse enough depending on the constants $K,\xi$.
\end{proof}

As mentioned in the introduction, \cite{H-percomm} contains a stronger result than the theorem above. We included this result as the proof is very short and as it suffices for our next application.

\subsection{Hyperbolically embedded subgroups}

We refer the reader to \cite{DGO} for the definition of hyperbolically embedded subgroup. In the following theorem, the technical statement $4)$ is essentially just a more general set of hypotheses for \cite[Theorem 4.42]{DGO} that allows to carry out essentially the same proof, and statement $3)$ is just a simplified version of $4)$. The motivating setting for statement $4)$ is when $H_\lambda$ acts parabolically on a hyperbolic space and its orbit is, up to finite Hausdorff distance, a horosphere contained in the horoball $X^\lambda_1$, as in Corollary \ref{paract}.

\begin{thm}
\label{hypembchar}
 Let $\{H_\lambda\}_{\lambda\in\Lambda}$ be a finite collection of subgroups of the group $G$. Then the following are equivalent.
\begin{enumerate}
\item There exists a generating set $X$ so that $\{H_\lambda\}\hookrightarrow_h (G,X)$.
\item There exists a generating set $X$ (the same as in $(1)$) so that $\Gamma=Cay(G,X)$ is hyperbolic relative to the left cosets of the $H_\lambda$'s and $d_\Gamma|_{H_\lambda}$ is quasi-isometric to a word metric.
\item $G$ acts by isometries on a geodesic space $S$ that is hyperbolic relative to the orbits (of a given point) of the cosets of the $H_\lambda$'s, and each $H_\lambda$ acts properly.
\item $G$ acts by isometries on a geodesic space $S$ so that $S$ is hyperbolic relative to a $G$-invariant collection $\{X^\lambda_{g_{i,\lambda}}\}_{g_{i,\lambda}\in G/H_\lambda, \lambda\in\Lambda}$ and
\begin{enumerate}[(a)]
\item each $H_\lambda$ acts properly,
\item
 $g_{i,\lambda} H_\lambda s\subseteq X^\lambda_{g_{i,\lambda}}$ for some fixed $s\in S$,
\item $H_\lambda$ stabilises $X^\lambda_{1}$ and acts coboundedly on $N_1(S\backslash X^\lambda_{1}) \cap X^\lambda_{1}$.
\end{enumerate}
\end{enumerate}
\end{thm}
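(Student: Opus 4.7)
The strategy is to close the cycle $(1) \Leftrightarrow (2) \Rightarrow (3) \Rightarrow (4) \Rightarrow (2)$; the implications $(2) \Rightarrow (3) \Rightarrow (4)$ are essentially definitional, $(1) \Leftrightarrow (2)$ is the translation of the DGO definition through Proposition \ref{rh2}, and $(4) \Rightarrow (2)$ carries the substance of the theorem.

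For $(1) \Leftrightarrow (2)$, recall that $\{H_\lambda\} \hookrightarrow_h (G,X)$ in \cite{DGO} requires that the Cayley graph of $G$ with generating set $X \sqcup \bigsqcup_\lambda (H_\lambda\setminus\{1\})$ be hyperbolic and that each relative metric $\hat{d}_\lambda$ on $H_\lambda$ be locally finite. The former Cayley graph is uniformly bilipschitz to the coned-off graph $\hat{\Gamma}$ of $\Gamma=Cay(G,X)$ with respect to the cosets of the $H_\lambda$, and local finiteness of $\hat{d}_\lambda$ is a direct rewriting of the BCP property for $\hat{\Gamma}$. Proposition \ref{rh2} then identifies (1) with the relative hyperbolicity of $\Gamma$. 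The quasi-isometric condition on $d_\Gamma|_{H_\lambda}$ is handled by a judicious choice of $X$: in $(1)\Rightarrow(2)$ one enlarges $X$ by a word generating set of each $H_\lambda$, which preserves both the Hausdorff class of the peripheral cosets and the hyperbolic embedding while forcing $H_\lambda$ to be undistorted in $\Gamma$; in $(2)\Rightarrow(1)$ the QI assumption together with BCP forces $\hat{d}_\lambda$-balls to be finite.

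For $(2)\Rightarrow(3)$, take $S=\Gamma$ with the left-translation action of $G$; the orbit through $1$ of a coset $gH_\lambda$ is the coset itself, so $S$ is hyperbolic relative to these orbits, and properness of the $H_\lambda$-action amounts to local finiteness of $d_\Gamma|_{H_\lambda}$, guaranteed by the QI assumption. For $(3)\Rightarrow(4)$, set $X^\lambda_{g_{i,\lambda}}=g_{i,\lambda}H_\lambda\cdot s$; this is a $G$-invariant collection with respect to which $S$ is relatively hyperbolic by hypothesis, conditions (a) and (b) are immediate, and (c) holds because $H_\lambda$ acts transitively on $X^\lambda_1=H_\lambda\cdot s$.

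The main obstacle is $(4)\Rightarrow(2)$, which I plan to handle by constructing a generating set of $G$ directly from the action on $S$. Fix $s\in S$; choose $R$ larger than a fundamental-domain diameter for each $H_\lambda$-action on the cobounded set $N_1(S\setminus X^\lambda_1)\cap X^\lambda_1$ (finite by (c)) and than the relevant constants coming from the relative hyperbolicity of $S$; and set
$$X=\{g\in G\setminus \textstyle\bigcup_\lambda H_\lambda : d_S(s,gs)\leq R\}.$$
That $X\cup\bigsqcup_\lambda H_\lambda$ generates $G$ is verified by tracking a geodesic from $s$ to $gs$ in $S$: by the relative hyperbolicity of $S$ together with (b), the transient portions can be followed by consecutive $X$-translates of $s$, while each deep portion inside some $X^\lambda_h$ enters and exits through the $hH_\lambda h^{-1}$-cobounded set of (c) and so is traversed by finitely many $X$-moves together with a single element of $hH_\lambda h^{-1}$. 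Relative hyperbolicity of $\Gamma=Cay(G,X)$ then follows from Proposition \ref{rh2} by comparing the coned-off graph $\hat{\Gamma}$ with a Bowditch space for $(S,\{X^\lambda_{g_{i,\lambda}}\})$ via the orbit map $g\mapsto gs$, which is $G$-equivariant and, by the generating-set argument, uniformly quasi-dense, so that hyperbolicity and BCP descend from $S$ to $\hat{\Gamma}$; the QI condition on $d_\Gamma|_{H_\lambda}$ falls out of the properness clause (a). The main subtlety I foresee is the BCP verification: ruling out spurious tied components coming from distinct cosets requires combining (c) with $(\alpha_1)$ for $S$ and checking that the orbit map gives a coarse bijection between cosets and peripheral sets $X^\lambda_{g_{i,\lambda}}$ with constants independent of the coset.
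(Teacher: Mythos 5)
Your cycle matches the paper's architecture (the paper proves $(1)\Rightarrow(2)$ and $(4)\Rightarrow(1)$, with $(2)\Rightarrow(3)\Rightarrow(4)$ left as immediate), but your argument for the substantive implication out of $(4)$ has a genuine gap. Nothing in hypothesis $(4)$ makes the action of $G$ on $S$ cobounded, nor makes the orbit $Gs$ coarsely dense along the transient parts of geodesics between orbit points: conditions $(a)$--$(c)$ only control each $H_\lambda$ near its own peripheral set. Hence the set $X=\{g\in G\setminus\bigcup H_\lambda : d_S(s,gs)\leq R\}$ can fail to generate $G$ together with $\bigcup H_\lambda$ for \emph{every} $R$ exceeding the constants you list. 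Concretely, take $G=F_2=\langle a,b\rangle$, $H=\langle a\rangle$, $s=1$, and let $S$ be the Cayley tree of $F_2$ with every $b$-edge rescaled to length $L$ and $X^1_g=gH\cdot s$. All hypotheses of $(4)$ hold with constants independent of $L$ ($S$ is a tree, so the tree-gradedness and $(\alpha_1)$, $(\alpha_2)$ constants do not see $L$; $H$ acts transitively on $X^1_1$, so the coboundedness constant in $(c)$ is $0$), yet for $L>R$ one has $\{g:d_S(s,gs)\leq R\}\subseteq\langle a\rangle$, so $X=\emptyset$ and $\langle X\cup H\rangle=\langle a\rangle\neq F_2$. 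The same phenomenon defeats your claim that the orbit map is uniformly quasi-dense, which you use to transfer hyperbolicity and BCP to $\hat\Gamma$.

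The paper circumvents exactly this by following \cite[Theorem 4.42]{DGO}: it first forms the Bestvina--Bromberg--Fujiwara projection complex $\calP_K({\bf Y})$ on the peripheral collection (a quasi-tree, the projection axioms being supplied by Lemma \ref{proj2:lem} and Lemma \ref{ap3}), and then picks one generator $x_e\in H_\lambda gH_\mu$ for each edge $e$ of $\calP_K({\bf Y})$, nearly minimizing $d(s,x_es)$ \emph{within the double coset} rather than imposing a uniform displacement bound (in the example above this correctly yields $b$, with $d_S(s,bs)=L$). Adjacency is thus governed by projections between peripheral sets, not by metric balls around $s$. One then shows $Cay(G,X\sqcup\calH)$ is quasi-isometric to $\calP_K({\bf Y})$ and proves the separation statement of Lemma \ref{450}, from which local finiteness of the relative metrics, hence $(1)$, follows. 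Two smaller points: deducing BCP from hyperbolic embeddedness is not a ``direct rewriting'' of local finiteness of $\hat d_\lambda$ but uses the bound on isolated components of polygons from \cite[Proposition 4.14]{DGO}; and enlarging $X$ by generating sets of the $H_\lambda$ in $(1)\Rightarrow(2)$ is not available to you, since the statement requires the \emph{same} generating set as in $(1)$ (the word-metric condition should instead be extracted from quasi-convexity of the peripheral cosets).
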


The implication $1)\Rightarrow 2)$ can probably also be proven as in the appendix of \cite{DS-treegr}.

\begin{proof}
 $1)\Rightarrow 2):$ We set $S=Cay(G,X)$. For $\calH=\bigcup H_\lambda\backslash\{1\}$, we have that $Cay(G,X\sqcup \calH)$ is a coned-off graph for $S$ with $\calP$ the collection of the left cosets of the $H_\lambda$'s. By definition of being hyperbolically embedded, $Cay(G,X\sqcup \calH)$ is hyperbolic.  We have to check the BCP property. It is easily seen that we can restrict to combinatorial paths instead of standard paths. We will use \cite[Proposition 4.14]{DGO}, which gives a quantity $D=D(L,n)$ with the following property. Consider an $n-$gon each of whose sides is either an $L-$quasi-geodesic (combinatorial path) in $Cay(G,X\sqcup \calH)$ or an isolated component (meaning not tied to any component of the other sides). Then each of the isolated components has length bounded by $D$. (The proposition actually gives a bound on the sum of such lengths but we will not need this.) Here we also used \cite[Lemma 4.11-$(b)$]{DGO} which shows the compatibility of the notion of length of components used in \cite[Proposition 4.14]{DGO} with the one we use.
\par
It is immediate to see that the first part of the BCP property holds. Also, the second part follows considering a quadrangle with two sides being subpaths of $\g_0,\g_1$, one side being a short geodesic connecting endpoints of $\g_0,\g_1$ and the last side being a $P-$component.
\par
$4)\Rightarrow 1):$ \cite[Theorem 4.42]{DGO} is a similar statement for actions on a hyperbolic space. We will adapt its proof to our setting.
\par
We now start with an application of a result from \cite{BBF}. Let ${\bf Y}$ be the collection of all $X^\lambda_{g_{i,\lambda}}$'s and denote by
$$\pi_Y(X)=\bigcup_{x\in X} \{y\in Y:d(x,y)\leq d(x,Y)+1\},$$
for $X,Y\in {\bf Y}$ with $X\neq Y$ (we identify $Y$ and $\calC(Y)$). As discussed in \cite[Lemma 4.3]{MS-prodtrees}, the axioms stated in the previous subsection are satisfied in view of Lemma \ref{proj2:lem} and the following fact from \cite{Si-proj} (which gives Axiom $(0)$ and will be used later).
\begin{lemma}
 \label{ap3}
Given a relatively hyperbolic space, there exists $C$ so that whenever $Q_1\neq Q_2$ are distinct peripheral sets we have
$$diam(\pi_{Q_1}(Q_2))\leq C.$$
\end{lemma}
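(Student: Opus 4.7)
The plan is to argue by contradiction and exploit property $(\alpha_1)$, which gives a uniform bound on the diameter of $N_K(Q_1)\cap N_K(Q_2)$. If $\mathrm{diam}(\pi_{Q_1}(Q_2))$ were unbounded over pairs of distinct peripheral sets, then for arbitrarily large $N$ I could find $Q_1\neq Q_2$ in $\calP$ and points $y_1,y_2\in Q_2$ together with chosen $p_i\in\pi_{Q_1}(y_i)\subseteq Q_1$ satisfying $d(p_1,p_2)\geq N$.

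The key observation is that a geodesic $\gamma$ from $y_1$ to $y_2$ in $X$ simultaneously stays close to $Q_2$ (because both endpoints lie on $Q_2$) and is forced to pass close to $p_1$ and to $p_2$ in $Q_1$ (because the projections of $y_1,y_2$ onto $Q_1$ are far apart). For the first, I would apply Lemma \ref{trh0} with $L=1$: since $y_1,y_2\in N_1(Q_2)$, the whole of $\gamma$ lies in $N_t(Q_2)$ for the absolute constant $t$ produced there. For the second, I would apply Lemma \ref{proj2:lem}: once $d(p_1,p_2)$ exceeds the constant $L_0$ from that lemma by more than the additive slack in the definition of $\pi_{Q_1}$, the geodesic $\gamma$ must intersect $B_{L_0}(p_1)$ and $B_{L_0}(p_2)$.

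Combining the two, I obtain points $q_1,q_2\in\gamma$ with $d(q_i,p_i)\leq L_0$, so that $q_i\in N_{L_0}(Q_1)\cap N_t(Q_2)$ while $d(q_1,q_2)\geq d(p_1,p_2)-2L_0\geq N-2L_0$. Setting $K=\max\{L_0,t\}$, for all sufficiently large $N$ this contradicts the bound $B=B(K)$ provided by $(\alpha_1)$. The constant $C$ in the statement can therefore be taken to be something like $B(K)+2L_0+2$.

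The only point requiring care is checking that the resulting $C$ depends only on the relatively hyperbolic structure of $X$ (through $t$, $L_0$, and $B(K)$) and not on the particular pair $Q_1\neq Q_2$ or on the particular choices of $y_1,y_2$ and of $p_i\in\pi_{Q_1}(y_i)$; this is transparent from the argument since each of $t$, $L_0$ and $B(K)$ is an absolute constant attached to the ambient space. I do not expect any serious conceptual obstacle: the lemma is essentially a direct combination of the projection estimates and the $(\alpha_1)$ condition already in force.
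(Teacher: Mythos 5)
Your argument is correct. Note, however, that the paper does not prove this lemma at all: it is quoted verbatim from \cite{Si-proj} as a known fact about closest-point projections in relatively hyperbolic spaces, so there is no in-paper proof to compare against. What you have written is a legitimate self-contained derivation from results that are available earlier in the paper: quasi-convexity of peripheral sets (Lemma \ref{trh0}) puts the geodesic $[y_1,y_2]$ in $N_t(Q_2)$; Lemma \ref{proj2:lem} forces it into $B_{L_0}(p_1)$ and $B_{L_0}(p_2)$ once $d(p_1,p_2)$ exceeds the threshold; and $(\alpha_1)$ then caps $d(q_1,q_2)$, hence $d(p_1,p_2)$. Two minor points worth being explicit about: first, $\pi_{Q_1}$ in the paper is an arbitrary choice of almost-closest-point map, so any $p_i\in Q_1$ with $d(y_i,p_i)\leq d(y_i,Q_1)+1$ is itself a valid value of $\pi_{Q_1}(y_i)$ and Lemma \ref{proj2:lem} applies to that choice with the same constant --- you flag this and it is indeed unproblematic; second, the contradiction framing is unnecessary, since the same estimate directly yields $d(p_1,p_2)\leq\max\{L_0, B(\max\{L_0,t\})+2L_0\}$ for every pair of points of $\pi_{Q_1}(Q_2)$. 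So your proof fills in a citation rather than replicating an argument, and it does so correctly.
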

By \cite[Theorem D]{BBF} we get that $\calP_K({\bf Y})=\calP_K(\{(\calC(Y),\pi_Y)\}_{Y\in\bf{Y}})$ is hyperbolic (it is actually a quasi-tree).
\par
We now wish to define a generating system $X$ of $G$ so that, first of all, the Cayley graph $Cay(G,X\sqcup \calH)$, where $\calH=\bigcup H_\lambda\backslash\{1\}$, is quasi-isometric to $\calP_K({\bf Y})$. Denote $s_\lambda= X^\lambda_{1}\supseteq H_\lambda s$. Recall that for each $h\in H_\lambda$ we have $hs_\lambda=s_\lambda$. For every edge $e$ in $\calP_K({\bf Y})$ going from some $s_\lambda$ to some $g s_\mu$ choose $x_e\in H_\lambda g H_\mu$ so that
$$d(s,x_e s)\leq \inf\{d(s,ys)| y\in H_\lambda g H_\mu\}+1.$$
In order to make the generating system symmetric, we can require $x_f=x_e^{-1}$ when $f$, in the notation above, is the edge $g^{-1}e$.
\begin{rem}(cfr. \cite[Remark 4.48]{DGO} )
 For $x_e$ as above there is an edge in $\calP_K({\bf Y})$ from $s_\lambda$ to $x_e s_\mu$. In fact, as $x_e=h_\lambda gh_\mu$ for some $h_\lambda\in H_\lambda, h_\mu\in H_\mu$, we have
$$d_{\calP_K({\bf Y})}(s_\lambda, x_e s_\mu)=d_{\calP_K({\bf Y})}(h_\lambda^{-1}s_\lambda, g h_\mu s_\mu) =d_{\calP_K({\bf Y})}(s_\lambda,gs_\mu)=1.$$
\end{rem}

The proof of \cite[Lemma 4.49]{DGO} applies verbatim to show the desired fact that $Cay(G,X\sqcup\calH)$ is quasi-isometric to $\calP_K({\bf Y})$ and hence hyperbolic.
\par
We now have to show our version of \cite[Lemma 4.50]{DGO}, which requires a slightly different proof.

\begin{lemma}
\label{450}
 There exists $\alpha$ so that if for some $Y\in {\bf Y}$ and $x\in X\cup \calH$ we have
$$diam(\pi_Y(\{s,xs\}))\geq \alpha$$
then $x\in H_\lambda$ and $Y=s_\lambda$ for some $\lambda\in \Lambda$.
\end{lemma}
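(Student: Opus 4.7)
My plan is to split the analysis according to whether $x\in\calH$ or $x\in X$, and to take $\alpha$ larger than all the constants that appear below.

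\emph{Case $x\in\calH$.} Then $x\in H_\lambda\setminus\{1\}$ for some $\lambda$. By condition $(c)$ in $4)$, $H_\lambda$ stabilises $s_\lambda=X^\lambda_1$, and by condition $(b)$ (taking $g_{i,\lambda}=1$ and the identity element of $H_\lambda$) we have $s\in H_\lambda s\subseteq s_\lambda$. Hence both $s$ and $xs$ lie in $s_\lambda$, so $\pi_Y(\{s,xs\})\subseteq \pi_Y(s_\lambda)$ up to an additive error. For $Y\neq s_\lambda$, Lemma \ref{ap3} bounds $\mathrm{diam}(\pi_Y(s_\lambda))$ by a uniform constant, so the hypothesis forces $Y=s_\lambda$.

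\emph{Case $x=x_e\in X$.} Let $e$ join $s_\lambda$ to $gs_\mu$. By the Remark preceding the lemma, $s_\lambda$ and $x_es_\mu$ are also joined by an edge in $\calP_K(\mathbf{Y})$. For any $Y$ different from both, the edge criterion gives $d^\pi_Y(s_\lambda,x_es_\mu)\leq K$ and hence $\mathrm{diam}(\pi_Y(s_\lambda)\cup\pi_Y(x_es_\mu))\leq K+2\xi$. Since $s\in s_\mu$ (again from $(b)$), we have $x_es\in x_es_\mu$, and combined with $s\in s_\lambda$ this bounds $\mathrm{diam}(\pi_Y(\{s,x_es\}))$ by $K+2\xi$ up to additive constants coming from the definition of $\pi_Y$ on single points versus subsets. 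Taking $\alpha$ larger than this bound rules out all such $Y$.

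It remains to treat $Y=s_\lambda$ and $Y=x_es_\mu$; by applying $x_e^{-1}$ (which is also in $X$ by construction, sending the edge $e$ to $x_e^{-1}e$), the second subcase reduces to the first with $\mu$ in place of $\lambda$. So assume $Y=s_\lambda$ and write $p\in\pi_{s_\lambda}(x_es)$. I would argue that $d(s,p)$ is bounded in terms of the data, using the defining near-minimality of $d(s,x_es)$ over the double coset $H_\lambda gH_\mu$: if $d(s,p)$ were large, then by condition $(c)$ the cobounded action of $H_\lambda$ on $N_1(S\setminus s_\lambda)\cap s_\lambda$ would yield some $h_\lambda\in H_\lambda$ with $h_\lambda^{-1}s$ close to $p$, and Lemma \ref{proj2:lem} together with Corollary \ref{firstpoint:cor} would give
\[ d(s,h_\lambda x_es)=d(h_\lambda^{-1}s,x_es)\leq d(s,x_es)-d(s,p)+O(1), \]
contradicting the near-minimality of $d(s,x_es)$ over $H_\lambda gH_\mu$ once $d(s,p)$ exceeds the relevant additive constant. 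This bounds $\mathrm{diam}(\pi_{s_\lambda}(\{s,x_es\}))$, completing the case.

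The main obstacle is the last subcase, where one has to extract a quantitative gain in distance from the assumption that the projection $p$ is far from $s$. Making this precise requires using condition $(c)$ to produce the correcting element $h_\lambda$, and then invoking the ``projection decreases distance'' behaviour in the relatively hyperbolic space $S$ (provided by Lemma \ref{proj2:lem} and Corollary \ref{firstpoint:cor}) to ensure the gain is strictly greater than the slack in the near-minimal choice of $x_e$.
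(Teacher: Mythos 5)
Your proposal is correct and follows essentially the same route as the paper: the generic $Y$ is handled via the edge criterion and the Remark, the subcase $Y=x_es_\mu$ is reduced to $Y=s_\lambda$ by translating by $x_e^{-1}$, the key subcase $Y=s_\lambda$ combines coboundedness from $(c)$ with Lemma \ref{proj2:lem} and the near-minimality of $x_e$ in the double coset, and the case $x\in\calH$ uses Lemma \ref{ap3}. The only differences are the order of the cases and cosmetic renamings (e.g.\ $h_\lambda^{-1}$ for the paper's $h$).
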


\begin{proof}
Suppose first $x=x_e\in X$, for $e$ connecting $s_\lambda$ to $gs_\mu$ in $\calP_K({\bf Y})$. There are 3 cases to consider, and in each one we will get a contradiction for $\alpha$ large enough.
\par
Case 1: $s_\lambda \neq Y \neq x s_\mu s$. In this case
$$diam(\pi_Y(\{s,xs\}))\leq K+2\xi,$$
as $s_\lambda, g s_\mu$ are connected by an edge in $\calP_K({\bf Y})$.
\par
Case 2: $s_\lambda=Y\neq x s_\mu$. If $\alpha$ is large then any geodesic from $xs$ to $s$ passes $C-$close to $\pi_Y(xs)$, for some $C=C(S)$, by Lemma \ref{proj2:lem}. Notice that $\pi_Y(xs)\in Y\cap N_1(S\backslash Y)$ (as any geodesic from $xs$ to $\pi_Y(xs)$ intersects $Y$ within distance $1$ of $\pi_Y(xs)$). Hence, by $(c)$, there is $R$ so that we can pick $h\in H_\lambda$ with $d(hs,\pi_Y(xs))\leq R$, and $R$ only depends on the action.
\par
By the definition of $x$, we have $d(s,h^{-1}xs)\geq d(s,xs)-1$. On the other hand, for $q$ a point on a geodesic from $xs$ to $s$ within distance $C$ of $\pi_Y(xs)$, we have
\par\smallskip
\begin{center}
\begin{tabular}{r l}
 $d(s,h^{-1}xs)=$ & $d(hs,xs)\leq d(hs,q)+d(q,xs)=$ \\
 & $d(hs,q)+d(s,xs)-d(q,xs)\leq$ \\
 & $R+C+d(s,xs)-(\alpha-(R+C)),$\\
\end{tabular}
\end{center}
a contradiction for $\alpha>2(R+C)+1$.
\par
Case 3: $x s_\mu=Y\neq s_\lambda$. We can translate by $x^{-1}$ to reduce to the previous case (in view of our choice $x_{g^{-1}e}=x_e^{-1}$).
\par
Hence, in the hypothesis of the lemma, we must have $x\in H_\lambda$, for some $\lambda\in \Lambda$. Notice that
$\bigcup_{\lambda\in \Lambda} \pi_{s_\lambda}\left(\bigcup_{\mu\neq\lambda} s_\mu\right)$
has finite diameter by Lemma \ref{ap3} (and $s$ belongs to the set), so that for $\alpha$ large enough we can exclude that $s_\lambda\neq Y$. Therefore, $s_\lambda=Y$ as required.
\end{proof}
The final part of the proof of \cite[Theorem 4.42]{DGO} only uses \cite[Lemma 4.50]{DGO}, and hence it applies to our case as well in view of Lemma \ref{450}.
\end{proof}

\begin{cor}
\label{paract}
 Let $\{H_\lambda\}_{\lambda\in\Lambda}$ be a finite collection of subgroups of the group $G$. Then there exists a generating set $X$ so that $\{H_\lambda\}\hookrightarrow_h (G,X)$ $\iff$ $G$ acts on a geodesic space $S$ so that each $H_\lambda$ acts properly, is a maximal parabolic subgroup and there is an invariant system of disjoint horoballs $\calU$ in 1-1 correspondence with the cosets of the $H_\lambda$'s in $G$ so that each $H_\lambda$ acts coboundedly on the corresponding horosphere.
\end{cor}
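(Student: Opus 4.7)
The plan is to derive the corollary as a reformulation of the equivalence $(1)\Leftrightarrow(4)$ in Theorem \ref{hypembchar}, using the standard dictionary between peripheral sets in a relatively hyperbolic space and horoballs attached to a hyperbolic space.

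For the forward direction, I would invoke $(1)\Rightarrow(2)$ of Theorem \ref{hypembchar} to obtain a generating set $X$ for which $\Gamma=Cay(G,X)$ is hyperbolic relative to the cosets of the $H_\lambda$'s and $d_\Gamma|_{H_\lambda}$ is quasi-isometric to a word metric of $H_\lambda$. Taking $S=Bow(\Gamma)$ for a $G$-equivariant choice of approximation graphs $\{\Gamma_{gH_\lambda}\}$, Proposition \ref{rh1} gives that $S$ is hyperbolic, and the action on $\Gamma$ extends to an isometric action of $G$ on $S$. The combinatorial horoballs $\calH(\Gamma_{gH_\lambda})$ form the required $G$-invariant family of pairwise disjoint horoballs in bijection with the cosets. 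Properness of $H_\lambda$ comes from the free action of $G$ on $\Gamma^0$; the setwise $G$-stabilizer of $\calH(\Gamma_{H_\lambda})$ is exactly $H_\lambda$, giving maximal parabolicity; and $H_\lambda$ acts cocompactly on the base horosphere, since after identifying the base horosphere with the approximation net in $H_\lambda \subseteq \Gamma$, the quasi-isometry in $(2)$ implies that $H_\lambda$ acts cocompactly on this net.

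For the backward direction, I would take the horoballs in $\calU$ as the peripheral family $\{X^\lambda_{gH_\lambda}\}$ and verify condition $(4)$ of Theorem \ref{hypembchar}. The $G$-equivariant bijection with cosets is given, $(4a)$ is the properness hypothesis, and $(4c)$ follows from the cobounded action on the horosphere, using that in hyperbolic geometry the horosphere $\partial X^\lambda_1$ is at finite Hausdorff distance from $N_1(S\setminus X^\lambda_1)\cap X^\lambda_1$. For $(4b)$ I would choose any basepoint $s$ and absorb it by enlarging each $X^\lambda_1$ by a bounded neighborhood, exploiting that $H_\lambda s$ lies within bounded distance of $\partial X^\lambda_{g(s)}$ (by maximal parabolicity together with cobounded action on the horosphere); this enlargement preserves all other properties. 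Relative hyperbolicity of $S$ with respect to $\calU$ is the classical fact that a hyperbolic space with a $G$-invariant system of disjoint horoballs satisfies $(\alpha_1), (\alpha_2)$ and weak $(*)$-asymptotic tree-gradedness with respect to those horoballs. Once $(4)$ is verified, Theorem \ref{hypembchar}$(4)\Rightarrow(1)$ produces the required generating set.

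The main obstacle is the backward direction, specifically the verification that $S$ is metrically relatively hyperbolic with respect to $\calU$. This is folklore but requires spelling out, from hyperbolicity of $S$ together with disjointness, that bounded neighborhoods of distinct horoballs have bounded intersection and that geodesic triangles either meet in a ball or are coarsely tree-graded along a single horoball. The single-basepoint requirement in $(4b)$ is a minor technicality resolved by bounded enlargement of the horoballs.
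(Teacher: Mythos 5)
Your proposal follows essentially the same route as the paper: the backward implication is exactly the paper's appeal to Theorem \ref{hypembchar}(4) after observing that $S$ is hyperbolic relative to the horoballs $\calU$, and the forward implication is the paper's construction of the Bowditch space from characterization (2) via (RH1), with the combinatorial horoballs as the invariant system. One small correction: since $X$ may be infinite, properness of the $H_\lambda$-action does not follow from freeness of the $G$-action on $\Gamma^0$ (balls in $Cay(G,X)$ can contain infinitely many vertices), but it does follow from the condition in (2) that $d_\Gamma|_{H_\lambda}$ is quasi-isometric to a word metric, which you already invoke for coboundedness on the horosphere.
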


\begin{proof}
The if part follows from the theorem since $S$ is hyperbolic relative to $\calU$. The only if part follows adding combinatorial horoballs to $S$ as in $2)$ in view of (RH1).
\end{proof}

The notion of weakly contracting element is defined in \cite{Si-contr} in terms of the existence of an action so that the closest point projection on an orbit satisfies essentially the same properties as a closest point projection on a peripheral set of a relatively hyperbolic space.

\begin{cor}
 \label{weakcontr}
Let $G$ be a group and $g\in G$ be an infinite order element. Then $g$ is weakly contracting if and only if it is contained in a virtually cyclic hyperbolically embedded subgroup.
\end{cor}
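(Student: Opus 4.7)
The plan is to bridge weak contraction and Theorem \ref{hypembchar} via the relative hyperbolicity version of the Bestvina--Bromberg--Fujiwara construction established earlier in this section. For the easy direction, assume $g\in H$ where $H$ is virtually cyclic and hyperbolically embedded in $G$. By Theorem \ref{hypembchar}~(2), there is a generating set $X$ such that $\Gamma = Cay(G,X)$ is hyperbolic relative to the cosets of $H$, with the inclusion of $H$ in $\Gamma$ a quasi-isometric embedding. Since $\langle g\rangle$ has finite index in $H$, its orbit under left translation lies at finite Hausdorff distance from the peripheral set $H$. The closest-point projections onto peripheral sets of a relatively hyperbolic space satisfy the axioms from \cite{Si-proj}, which are precisely those used in \cite{Si-contr} to define the weakly contracting property, so $g$ is weakly contracting through the action on $\Gamma$.

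For the converse, suppose $g$ is weakly contracting through a $G$-action on a geodesic space $Y$ whose closest-point projection $\pi_A:Y\to A=\langle g\rangle\cdot y_0$ satisfies the projection axioms of \cite{Si-proj}. Let $E(g)\leq G$ denote the maximal virtually cyclic subgroup containing $g$ (the elementary closure of $\langle g\rangle$); the enlarged orbit $E(g)\cdot y_0$ lies at finite Hausdorff distance from $A$. Set $\mathbf{Y} = \{fE(g)\cdot y_0 : fE(g)\in G/E(g)\}$, with projection maps $\pi_Y$ inherited from the weakly contracting projection by $G$-equivariance. The weakly contracting axioms translate directly into BBF axioms~(0) and~(3), while axiom~(4) follows from maximality of $E(g)$, which prevents distinct cosets $fE(g)$ from sharing large coarse projection diameter. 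Applying the relatively hyperbolic version of the BBF theorem proven earlier in this section to $\{(\calC(Y), \pi_Y)\}_{Y\in\mathbf{Y}}$, the resulting space $S$ is hyperbolic relative to $\{\calC(Y)\}_{Y\in\mathbf{Y}}$. The induced $G$-action has $E(g)$ stabilising $\calC(E(g)\cdot y_0)$ and acting properly and cocompactly on it, since $E(g)$ is virtually cyclic and its orbit in $S$ is quasi-isometric to $E(g)$ itself. Theorem \ref{hypembchar}~(3) then produces a generating set $X$ such that $\{E(g)\}\hookrightarrow_h(G,X)$, furnishing the desired virtually cyclic hyperbolically embedded subgroup containing $g$.

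The main obstacle is the verification of BBF axiom~(4) together with the properness of the $E(g)$-action on the corresponding peripheral set $\calC(E(g)\cdot y_0)$ of $S$. Both hinge on the switch from $\langle g\rangle$ to $E(g)$: maximality of the virtually cyclic closure prevents pathological near-coincidences between distinct translates $fE(g)\cdot y_0$, yielding the finiteness needed for axiom~(4), while ensuring that the stabiliser of the peripheral set matches $E(g)$ gives properness of its action. The remaining steps are routine bookkeeping translating the weakly contracting projection axioms into BBF form and reading off the conclusion from Theorem \ref{hypembchar}~(3).
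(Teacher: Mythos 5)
Your ``if'' direction (virtually cyclic hyperbolically embedded $\Rightarrow$ weakly contracting) is exactly the paper's argument: pass to $Cay(G,X)$ via Theorem \ref{hypembchar}, note the coset of $H$ is a peripheral set at finite Hausdorff distance from the $\langle g\rangle$-orbit, and quote the projection properties from \cite{Si-proj}. That half is fine.

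For the converse the paper does not give a proof at all --- it simply cites \cite{Si-contr}, where this implication is one of the main theorems. Your attempt to prove it independently via the BBF construction is in the spirit of how such results are established, but as written it has two genuine gaps. First, you begin by invoking ``the maximal virtually cyclic subgroup $E(g)$ containing $g$.'' The existence and uniqueness of such an elementary closure is not a formality: for a general infinite-order element it can fail to have the properties you need (in $\Z^2$ the cyclic subgroup $\langle(1,0)\rangle$ is already maximal among virtually cyclic subgroups containing $(1,0)$, and maximality buys nothing). Producing $E(g)$ with the right commensurator/stabiliser properties is itself a substantial part of the argument in \cite{Si-contr} and \cite{DGO}, and it rests on the contraction hypothesis, not on an abstract maximality principle. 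Second, your justification of BBF Axiom $(4)$ --- ``maximality of $E(g)$ prevents distinct cosets from sharing large coarse projection diameter'' --- conflates a qualitative statement with a quantitative one. Axiom $(4)$ asserts that for fixed $X,Z$ only \emph{finitely many} $Y$ satisfy $d^\pi_Y(X,Z)\geq\xi$; the standard way to get this is a lower bound for $d(x,z)$ in terms of $\sum_Y[d^\pi_Y(x,z)]_\xi$, which must be extracted from the weakly contracting projection axioms (this is where the contraction is really used), and maximality of $E(g)$ plays no role in it. Until these two points are supplied, the converse direction is not proved; if you do not want to supply them, the honest move is the paper's: cite \cite{Si-contr} for this implication.
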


\begin{proof}
 The only if part is proven in \cite{Si-contr}. If $g$ is contained in the virtually cyclic subgroup $E(g)$ hyperbolically embedded in $(G,X)$ then the action of $G$ on $Cay(G,X)$ satisfies the requirements of the definition of weakly contracting in view of the properties of closest point projections on peripheral sets \cite{Si-proj}.
\end{proof}

We say that $H<G$ is a \emph{virtual retract} if there exists a subgroup $G'<G$ and a retraction $\phi_H:G'\to H'$, where $H'=G'\cap H$. For example, all subgroups of finitely generated abelian groups are virtual retracts.

\begin{cor}
\label{hypembsgp}
Suppose $\{H_1,\dots,H_{\lambda}\}$ is hyperbolically embedded in $G$, and let $K<G$. Suppose further that, for each $\lambda$, $H_\lambda\cap K$ is a virtual retract of $K$. Then $\{H_1\cap K,\dots,H_{\lambda}\cap K\}$ is hyperbolically embedded in $K$.
\end{cor}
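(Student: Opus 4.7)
The plan is to verify characterization (2) of Theorem \ref{hypembchar} for $K$ with an appropriate generating set. Let $X$ be the generating set of $G$ provided by the hypothesis, so that $\Gamma=Cay(G,X)$ is hyperbolic relative to $\{gH_\lambda\}$ and $d_\Gamma|_{H_\lambda}$ is quasi-isometric to a word metric on $H_\lambda$. Pick a finite generating set $Y$ of $K$; enlarging $X$ if necessary, we may assume $Y\subseteq X$.

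The metric condition in (2) follows quickly from the virtual retract hypothesis. For each $\lambda$, the retraction $\phi_\lambda\co K'_\lambda\to H_\lambda\cap K$ is Lipschitz for any word metric, so $H_\lambda\cap K$ is quasi-isometrically embedded in $K'_\lambda$ and, by finite index, in $K$. Chaining this with the QI-embedding $H_\lambda\hookrightarrow G$ from the hypothesis, one obtains that $d_{Cay(K,Y)}|_{H_\lambda\cap K}$ is quasi-isometric to the word metric on $H_\lambda\cap K$; concretely, $d_{H_\lambda\cap K}\ge d_K \ge d_G \asymp d_{H_\lambda}$ on $H_\lambda\cap K$, combined with $d_K\asymp d_{H_\lambda\cap K}$ from the retract.

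For the relative hyperbolicity, I would apply characterization (RH1) via Bowditch spaces. Let $\mathcal B_G$ be a Bowditch space for $\Gamma$ (hyperbolic by the hypothesis and Proposition \ref{rh1}) and let $\mathcal B_K$ be the Bowditch space of $Cay(K,Y)$ with combinatorial horoballs attached to the cosets $k(H_\lambda\cap K)$. There is a natural $K$-equivariant map $\iota\co\mathcal B_K\to\mathcal B_G$ that is the inclusion $K\hookrightarrow G$ on the Cayley-graph part and sends the horoball over $k(H_\lambda\cap K)$ into the horoball over $kH_\lambda$ via the inclusion of the defining nets. The map is Lipschitz on the Cayley part and, by the horoball formula in Definition \ref{bowsp} together with the QI-embedding $H_\lambda\cap K\hookrightarrow H_\lambda$ from the first step, it is a QI-embedding on each horoball.

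The main obstacle is to upgrade $\iota$ to a global quasi-isometric embedding, which will transfer hyperbolicity from $\mathcal B_G$ to $\mathcal B_K$. The Lipschitz direction is automatic; for the other direction one picks $k_1,k_2\in K$ and a $\mathcal B_G$-geodesic $\gamma$ between them, uses Corollary \ref{rh1geod} to straighten its $\Gamma$-portion into a $(C,C)$-quasi-geodesic of $\Gamma$ with controlled horoball excursions, and invokes the BCP property for $\hat G$ (Proposition \ref{rh2}) to see that each $\calP$-component of $\gamma$ has bounded length. Components lying in a $K$-translate $kH_\lambda$ are rerouted through $k(H_\lambda\cap K)$ using the retractions $\phi_\lambda$, which is short by the QI-embedding established above; components that enter non-$K$-peripherals $gH_\lambda$ ($g\notin K$) contribute only bounded detours because their two endpoints must lie in $K$ and hence in $N_R(gH_\lambda)\cap K$ for some bounded $R$, whose diameter is controlled by $(\alpha_1)$ applied in $\Gamma$. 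Packaging these replacements yields a path in $\mathcal B_K$ of length $O(d_{\mathcal B_G}(k_1,k_2))$, so $\iota$ is a QI-embedding. Hyperbolicity of $\mathcal B_K$ then follows from hyperbolicity of $\mathcal B_G$, which by (RH1) gives that $Cay(K,Y)$ is hyperbolic relative to $\{k(H_\lambda\cap K)\}$, completing characterization (2) and the proof.
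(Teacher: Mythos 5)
Your proposal has a genuine gap, and it begins with the choice of target. You fix a \emph{finite} generating set $Y$ of $K$ and aim to show that $Cay(K,Y)$ is hyperbolic relative to the cosets of the $H_\lambda\cap K$; but hyperbolic embeddedness only asks for \emph{some} (typically infinite) generating set, and with a finite one the claim is false in general, since $K$ is an arbitrary subgroup and need not carry any proper relatively hyperbolic structure. The same issue sinks the key step, the assertion that $\iota\co\mathcal{B}_K\to\mathcal{B}_G$ is a quasi-isometric embedding: on the Cayley-graph part this forces $K$ to be undistorted in $G$, which the hypotheses do not give. Concretely, let $G=F_2\rtimes_\varphi\mathbb{Z}$ be hyperbolic, $K=F_2$ the (exponentially distorted) fiber, and $H=E(t)$ the maximal virtually cyclic subgroup containing the stable letter; then $H$ is hyperbolically embedded, $H\cap K=\{1\}$ is trivially a virtual retract of $K$, yet $\iota$ restricts on $Cay(K,Y)$ to the exponentially distorted inclusion $K\hookrightarrow G$. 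Your rerouting argument breaks exactly where you wave at it: a $\mathcal{B}_G$-geodesic between points of $K$ can leave every neighborhood of $\iota(\mathcal{B}_K)$, and the entrance and exit points of its excursions into horoballs over cosets $gH_\lambda$ with $g\notin K$ are arbitrary points of $G$ with no reason to lie in or near $K$, so $(\alpha_1)$ gives no control over the detour you need.

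The paper's proof never puts a relatively hyperbolic structure on $Cay(K,Y)$. It instead builds an auxiliary space $S$ from $Cay(G,X)$ by adding the $G$-translates of the retraction edges $(x,\phi_{H_\lambda}(x))$, checks via (RH2) that $S$ is still hyperbolic relative to the cosets of the $H_\lambda$, observes that in $S$ these peripherals are now at bounded Hausdorff distance from cosets of $H_\lambda\cap K$, and then applies characterization $(3)$ of Theorem \ref{hypembchar} to the induced action of $K$ on $S$. That characterization requires only that each $H_\lambda\cap K$ act properly --- proved there by showing $\phi_{H_\lambda}\circ t\circ\pi_{H_\lambda}$ is Lipschitz on $(G,d_S)$, i.e.\ by combining the retraction with the coarse closest-point projection $\pi_{H_\lambda}$ of the relatively hyperbolic space --- and it does \emph{not} require $K$ to act properly or coboundedly on $S$. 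Exploiting a non-proper, non-cobounded action is the idea your argument is missing; the correct (infinite) generating set of $K$ is then produced automatically by the implication $(4)\Rightarrow(1)$.
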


It is quite possible that the corollary holds in greater generality.

\begin{quest}
 Can the virtual retraction condition be substituted by a more general condition? Is the corollary true without additional hypotheses?
\end{quest}

\begin{proof}
Let $\phi_{H_\lambda}:H'_\lambda\to K'_\lambda$ be as in the definition of virtual retract (with $K'_\lambda$ being the finite index subgroup of $H_\lambda\cap K$). Denote by $S$ the metric graph obtained adding edges $(gx,g\phi_{H_\lambda}(x))$ to $Cay(G,X)$, for each $x\in H'_\lambda$ and $g\in G$. Notice that $G$, and hence $K$, acts on $S$.
\par
We claim that $S$ is hyperbolic relative to the collection $\calP$ of the left cosets of the $H_\lambda$'s it contains. The easiest definition to check is (RH2). In fact, the natural coned-off graph $\hat{C}$ of $Cay(G,X)$ (the one obtained adding edges connecting each pair of vertices contained in a left coset of some $H_\lambda$) is a coned-off graph for $S$ as well, as $S$ is obtained adding to $Cay(G,X)$ some of the edges that have to be added to get $\hat{C}$. The BCP property for $S$ follows from the BCP property for $Cay(G,X)$ and the observation that $d_S\leq d_{Cay(G,X)}$.
\par
Notice that each $P\in\calP$ is within bounded Hausdorff distance in $S$ from a left coset of $H_\lambda\cap K$. Hence, in order to apply Theorem \ref{hypembchar}-$(3)$, we now only have to show that each $H_\lambda\cap K$ acts properly on $S$. We will actually show that the restriction of the metric of $S$ to $H_\lambda\cap K$ is quasi-isometric to a word metric. Denote by $\pi_\lambda: Cay(G,X)\to H_\lambda$ a closest point projection. Choose now any quasi-isometry $t: H_\lambda\to H'_\lambda$ (both endowed with word metrics) that restricts to the identity on $K'_\lambda$.
\par
We claim that $f_\lambda=\phi_{H_\lambda}\circ t \circ (\pi_\lambda|_G):(G,d_S|_G)\to K'_\lambda$ is Lipschitz (notice that the closest point projection is defined in terms of the metric of $Cay(G,X)$, and now we are regarding $G$ as endowed with the restriction of the metric of $S$). This will prove that $K'_\lambda$, and hence $H_\lambda\cap K_\lambda$, is quasi-isometrically embedded in $S$, as $f_\lambda$ restricts to the identity on $K'_\lambda$.
\par
It suffices to show that $f_\lambda$ maps the endpoints of any edge of $S$ to a set of uniformly bounded diameter. Let $e=(p,q)$ be an edge of $S$.
\par
Case 1: $e\subseteq Cay(G,X)$. Closest points projections in relatively hyperbolic spaces are coarsely Lipschitz \cite{Si-proj} (it is a consequence of Lemma \ref{proj2:lem}), so that $d_{Cay(G,X)}(\pi_\lambda(p),\pi_\lambda(q))$ can be uniformly bounded. But this implies that $d_{H_\lambda}(\pi_\lambda(p),\pi_\lambda(q))$ can be uniformly bounded as well as $H_\lambda$ is quasi-isometrically embedded in $Cay(G,X)$. The conclusion follows from the fact that $t$ and $\phi_{H_\lambda}$ are Lipschitz (when restricted to the vertices of their domain).
\par
Case 2: $e=(gx,g\phi_{H_\lambda}(x))$ and $g \in H_\lambda$. The edge is contained in $H_\lambda$, so that $\pi_\lambda$ acts as the identity, and we can conclude as $t$ and $\phi_{H_\lambda}$ are Lipschitz.
\par
Case 3: $e=(gx,g\phi_{H_\lambda}(x))$ and $g \notin H_\lambda$. The projection of a peripheral set on another peripheral set has uniformly bounded diameter by Lemma \ref{ap3}, so that $d_{Cay(G,X)}(\pi_\lambda(p),\pi_\lambda(q))$ can be bounded. We can conclude as in Case 1.
\end{proof}

\subsection{Divergence}
\label{div}
We now recall the definition of divergence of a metric space $X$, following \cite{DMS-div}. Choose constants $0<\delta<1$ and $\gamma\geq 0$.
For a triple of points $a, b, c \in X$ with $d(c, \{a, b\}) = r > 0$, let $div_{\gamma} (a, b, c; \delta)$ be the infimum of the lengths of paths connecting $a, b$ that avoid $B(c, \delta r - \gamma)$. (If no such path exists, set $div_\gamma (a, b, c; \delta) = \infty$.)
\begin{defn}
The divergence function $Div^X_\gamma (n, \delta)$ of the space $X$ is defined as
the supremum of all numbers $div_\gamma (a, b, c; \delta)$ with $d(a, b) \leq n$.
\end{defn}

For functions $f,g:\R^+\to\R^+$ write $f\preceq g$ if there exists $C$ so that $f(n)\leq Cg(Cn+C)+Cn+C$, and define $\succeq,\asymp$ similarly. By \cite[Corollary 3.12]{DMS-div}, there exist $\delta_0,\gamma_0$ so that when $X$ is a Cayley graph we have $Div^X_\gamma (n, \delta)\asymp Div^X_{\gamma_0} (n, \delta_0)$ whenever $0<\delta\leq \delta_0$ and $\gamma\geq \gamma_0$. Also, the $\asymp$-equivalence class of $Div^X_\gamma (n, \delta)$ is a quasi-isometry invariant (of Cayley graphs). We will write $Div^X(n)$ for (the $\asymp$-equivalence class of) $Div^X_{\gamma_0} (n, \delta_0)$.
\par
We now show the following.

\begin{thm}
 Suppose that the one-ended group $G$ is hyperbolic relative to the (possibly empty) collection of proper subgroups $H_1,\dots, H_k$. Then
$$e^n \preceq Div^G(n)\preceq \max\{e^n,e^nDiv^{H_i}(n)\}.$$
\end{thm}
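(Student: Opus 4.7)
The plan is to prove the lower and upper bound separately, with the hyperbolic Bowditch space $B=Bow(G)$ provided by characterization (RH1) as the main tool. The basic principle is that the ``transient'' region of $B$ (the part where $d_B$ and $d_G$ are comparable by Lemma \ref{propapplies} and Proposition \ref{transqgeod}) carries the usual hyperbolic divergence behavior, while horoballs encode peripheral geometry.

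\textbf{Lower bound.} I would first produce a pair $a,b \in G$ at $G$-distance $n$ with geodesic midpoint $c$ lying deep in the transient part of $[a,b]$. Since $G$ is one-ended and each $H_i$ is proper, $\partial B$ contains non-parabolic (conical) limit points, and we may arrange $a,b$ to lie on a bi-infinite $G$-geodesic $\gamma$ between two such points, with $c=\gamma(0)$, $a=\gamma(-n/2)$, $b=\gamma(n/2)$. By the comparison of $d_B$ and $d_G$ on transient regions, any $G$-path $\alpha$ from $a$ to $b$ avoiding $B_G(c,\delta n)$, viewed in $B$, also avoids a ball $B_B(c, \delta' n)$ for some $\delta'>0$. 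Then a standard hyperbolic divergence argument in $B$ around $c$ (thin triangles together with exponential sphere growth on the transient side) yields $\ell_B(\alpha)\geq e^{\epsilon n}$. Since $\alpha\subset G$ its $B$-length and $G$-length agree, giving the required lower bound.

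\textbf{Upper bound.} Given $a,b,c\in G$ with $d_G(a,b)\leq n$ and $d_G(c,\{a,b\})\geq\delta n$, I would first apply exponential divergence in the hyperbolic space $B$ to obtain a $B$-path $\beta$ from $a$ to $b$ of $B$-length at most $e^{Cn}$ avoiding a $B$-ball around $c$ chosen large enough that $\beta\cap G$ avoids $B_G(c,\delta n-\gamma_0)$. Then I would replace $\beta$ by a $G$-path $\beta'$ by substituting each combinatorial horoball excursion of $\beta$ with a path in the corresponding peripheral coset $P$. The key point is that $\beta$ can be taken to be a $B$-quasi-geodesic going \emph{around} the ball, so it never needs to dive deep into a horoball (going deep into a cusp does not help ``skirt'' $c$); hence each horoball excursion of $\beta$ may be chosen so that its entry/exit endpoints lie at $G$-distance $\lesssim n$ in $P\cong H_i$. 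Replacing such an excursion by a path in $P$ avoiding $B_G(c,\delta n)$ contributes at most $Div^{H_i}(n)$ to the length. The number of excursions is bounded by the $B$-length $e^{Cn}$, so $\ell_G(\beta')\lesssim e^{Cn}+e^{Cn}\max_i Div^{H_i}(n)\preceq\max\{e^n,e^n Div^{H_i}(n)\}$.

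\textbf{Main obstacle.} The principal difficulty lies in the horoball bookkeeping for the upper bound: a priori a horoball excursion of small $B$-length can connect endpoints at huge $G$-distance in the peripheral coset (the distortion is exponential), which would blow up the naive translation to $\beta'$. The argument must exploit the fact that a $B$-geodesic around $c$ stays at bounded $B$-depth into horoballs because deep horoball excursions both increase $B$-length and do nothing to help go ``around'' the transient point $c$; proving this cleanly is where the bulk of the work goes, and uses hyperbolicity of $B$ together with the combinatorial horoball geometry recalled in Section 2. For the lower bound, the subtlety is ensuring the midpoint $c$ can be taken strongly transient with large transient collar, which is where one-endedness of $G$ together with properness of the $H_i$ enters essentially.
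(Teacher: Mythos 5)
Your proposal is built on transferring the problem to the Bowditch space $B=Bow(G)$, but in both directions it implicitly treats $d_B$ and $d_G$ as linearly comparable on $G$, and they are not: peripheral cosets are exponentially distorted in $B$ (that is the whole point of the horoball construction). This breaks the lower bound at the step ``any $G$-path avoiding $B_G(c,\delta n)$ also avoids $B_B(c,\delta' n)$''. The point $c$ will in general lie in (or uniformly close to) some peripheral coset $P$ --- in a free product, say, every group element does --- and a point $x\in P$ with $d_G(x,c)=2\delta n$ satisfies $d_B(x,c)=O(\log n)$ by travelling through the horoball over $P$. So a $G$-path can stay $G$-far from $c$ while coming $O(\log n)$-close to $c$ in $B$, and the hyperbolic estimate $d_B(c,\alpha)\leq \delta_B\log_2 \ell_B(\alpha)+O(1)$ then yields no exponential bound. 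The paper avoids $B$ entirely here: it takes a bi-infinite geodesic $\alpha$ (coming from Osin's hyperbolically embedded virtually cyclic $E(g)$) with $trans_{\mu,R}(\alpha)=\alpha$, and runs the halving argument of Lemma \ref{logest} directly in $G$ using the relative Rips condition, getting $d_G(c,\beta)\leq C\log_2\ell_G(\beta)+C$. Note also that one-endedness is not what produces this geodesic; it is the existence of a hyperbolic element.

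The upper bound has two further gaps. First, ``exponential divergence in the hyperbolic space $B$'' is not a theorem: hyperbolic spaces can have infinite divergence (trees), and $B$ is neither proper nor cocompact; the existence of an avoiding path must come from one-endedness of $G$ itself (the paper invokes \cite[Lemma 3.4]{DMS-div} in $G$). Second, and fatally, no $B$-path from $a$ to $b$ avoiding $B_B(c,\delta r)$ need exist at all, because $a$ itself may lie in that ball: if $a$ and $c$ share a peripheral coset then $d_B(a,c)=O(\log r)$ while $d_G(a,c)=r$. You correctly flag the exponential distortion as the obstacle to translating horoball excursions back to $G$, but it already undermines the choice of $\beta$. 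The paper's argument stays in $G$: take any avoiding path, radially project a $1$-net of it onto the sphere of radius $d(b,c)$ about $c$ to get $\preceq e^n$ points, and use the relative Rips condition to see that consecutive projected points are either uniformly close or lie in a common $N_D(P_i)$, where a detour of length $\preceq Div^{H_i}(n)$ (supplied by Lemma \ref{proj1:lem} and undistortedness of peripherals) is inserted. I would rework both halves along these lines, keeping all lengths and balls in the metric of $G$.
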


(We write $\max\{e^n,e^nDiv^{H_i}(n)\}$ instead of $\max\{e^nDiv^{H_i}(n)\}$ because we allow the collection of subgroups to be empty.)

Not all super-exponential functions $f(n)$ satisfy $f(n)\asymp e^n f(n)$, hence the following questions arise.

\begin{quest}
 Does there exist a group $G$ of super-exponential divergence so that $e^nDiv^G(n)\not\asymp Div^G(n)$? If so, is it true that $Div^G(n)\preceq \max\{e^n,Div^{H_i}(n)\}$? Do there exist one-ended properly relatively hyperbolic groups with isomorphic peripheral subgroups but different divergence?
\end{quest}

In the case of finitely presented groups we have a cleaner statement, due to the following lemma.
\begin{lemma}
\label{atmostexp}
 The divergence of a finitely presented one-ended group is at most exponential.
\end{lemma}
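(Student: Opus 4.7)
The plan is to produce, for given $a, b, c \in G$ with $n = d(a, b)$ and $r = d(c, \{a,b\})$, a path from $a$ to $b$ avoiding $B(c, \delta_0 r - \gamma_0)$ of length at most $Ce^{Cn}$ for a constant $C$ depending only on a chosen finite presentation of $G$. The strategy is to exhibit such a path inside a ball $B(c, Kn)$ for a constant $K$ depending only on the presentation; since $G$ has at most exponential growth, $|B(c, Kn)| \leq |S|^{Kn+1}$ for any finite generating set $S$, so a path contained in this ball automatically has length bounded exponentially in $n$, which would yield $Div^G(n) \preceq e^n$.

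The key geometric claim to establish is that there exists $K \geq 1$, depending only on the presentation, such that $a$ and $b$ lie in the same connected component of the subgraph of $\Gamma = Cay(G, S)$ induced on $B(c, Kn) \setminus B(c, r')$, where $r' = \delta_0 r - \gamma_0$. To prove this I would take a geodesic $\gamma$ from $a$ to $b$ (of length $n$, lying in $B(c, 2n)$ since $r \leq n$) and decompose it into maximal subsegments lying inside or outside $B(c, r')$. The outside subsegments already lie in the desired annular region, while each inside subsegment $\sigma$---with endpoints at distance about $r'$ from $c$ and mutual distance at most $n$---must be replaced by a detour in $B(c, Kn) \setminus B(c, r')$ joining the same endpoints.

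The main obstacle is the construction of these detours, where one-endedness and finite presentability enter essentially. Since $G$ is one-ended, the complement $\Gamma \setminus B(c, r')$ has a unique unbounded component $U$; finite presentability, via the simple-connectedness of the Cayley $2$-complex together with the bounded relator length, can be used to control the diameters of the bounded components of $\Gamma \setminus B(c, r')$ and in particular to ensure that for $r$ larger than a presentation-dependent constant, both endpoints of each inside subsegment must lie in $U$. One then needs to show that two points of $U$ at mutual distance at most $n$ can be joined by a path inside $U \cap B(c, Kn)$ for suitable $K$; I would carry this out by an inductive outward-pushing argument, using relators from the finite presentation to homotope portions of the initial path outward one annulus at a time, with the bounded relator length keeping the total radial excursion linear in $n$. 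The remaining case, in which $r$ itself is bounded by a presentation-dependent constant, reduces directly to one-endedness applied to the bounded set $B(c, r')$, together with the exponential growth bound on $|B(c, Kn)|$.
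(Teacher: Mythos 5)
Your overall skeleton is exactly the one the paper uses: reduce to showing that $a$ and $b$ can be joined inside an annulus $\overline{B}(c,Kn)\setminus B(c,\delta_0 r-\gamma_0)$ whose inner radius is a definite fraction of $r$ and whose outer radius is linear in $n$, and then observe that such an annulus contains $\preceq e^{n}$ group elements, so a shortest (hence simple) path in the induced subgraph has exponentially bounded length. The paper disposes of the annulus statement by citing \cite[Lemma 5.6]{MS-qhyp} (any $x,y$ with $r_x=d(x,1)\le r_y=d(y,1)$ can be joined in $\overline{B}(1,3r_y)\setminus B(1,r_x/3)$) and then counts, exactly as you do. Two small points in your reduction: a path "contained in the ball" is not automatically short, so you should say you take a shortest path in the induced subgraph; and $r\le n$ is not automatic --- you first dispose of the case $r>(n-\gamma_0)/(1-\delta_0)$, where the geodesic $[a,b]$ itself avoids the forbidden ball, and only then may assume $r\preceq n$. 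Both are trivial to fix.

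The genuine issue is in the part you chose to prove rather than cite, namely the detour construction. As described, the "inductive outward-pushing, one annulus at a time" step is circular: to homotope the innermost arc of an inside subsegment from depth $d$ to depth $d+1$ you need to know that its two endpoints (which lie near the sphere $S(c,d)$) can be rejoined outside $B(c,d)$ within a controlled region --- but that is precisely the connectivity-of-annuli statement you are trying to establish, and it is false for, say, $\mathbb{Z}$, so one-endedness must enter this step and not merely the bookkeeping about bounded components. The standard repair is global rather than inductive: one-endedness (via \cite[Lemma 3.4]{DMS-div}) supplies \emph{some} path $\beta$ from $a$ to $b$ avoiding $B(c,r')$, with no a priori length or radius control; one fills the loop $\gamma\beta^{-1}$ with a van Kampen diagram $\Delta$ and takes the frontier of the subcomplex of $2$-cells whose images meet $B(c,r')$. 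Since relators have length at most $\rho$, the frontier edges map into $\overline{B}(c,r'+\rho)\setminus B(c,r')$, and the frontier arcs pair up the entry and exit points of $\gamma$ on that sphere, producing the required detours inside an annulus of width $\rho$ (in particular inside $B(c,Kn)$). Without this diagram argument --- or an explicit appeal to the cited lemma --- your proof of the key geometric claim does not close.
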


\begin{proof}
 All pairs of points $x,y$, say with $r_x=d(x,1)\leq d(y,1)=r_y$, can be joined by a path in $\overline{B}(1,3r_y)\backslash B(1,r_x/3)$, see e.g. \cite[Lemma 5.6]{MS-qhyp}. The intersection of such subset and $G$ has $\preceq e^{r_y}$ points, and the conclusion easily follows.
\end{proof}

A relatively hyperbolic group is finitely presented if and only if its peripheral subgroups are, see \cite{DG-prespar} and \cite{Os-rh}. Hence we have the following.

\begin{cor}
 Suppose that the one-ended group $G$ is hyperbolic relative to its proper subgroups $H_1,\dots, H_k$. Then $Div^G(n)\asymp e^n$.
\end{cor}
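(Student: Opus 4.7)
The proof will be a short combination of the preceding Theorem with Lemma \ref{atmostexp}, using the Dahmani--Groves/Osin equivalence quoted immediately before the corollary as the glue. No new argument is required; the work is entirely packaging.

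First I would dispense with the lower bound. The Theorem, applied to the one-ended group $G$ hyperbolic relative to the (proper) subgroups $H_1,\dots,H_k$, gives directly
\[
e^n \preceq Div^G(n),
\]
with no extra hypothesis needed. This is the only place where properness of the peripheral subgroups is used; it is what guarantees that the exponential lower bound is nontrivial.

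For the upper bound I would bypass the $\max\{e^n, e^n Div^{H_i}(n)\}$ term in the Theorem entirely (which would otherwise force me to argue separately about $Div^{H_i}$, possibly for non-one-ended peripherals) and instead appeal straight to Lemma \ref{atmostexp}. To do so I need $G$ itself to be finitely presented. This is where the quoted equivalence ``$G$ is finitely presented iff each $H_i$ is,'' from \cite{DG-prespar,Os-rh}, does the work: under the standing assumption that the peripheral subgroups are finitely presented, $G$ is finitely presented. Combined with the hypothesis that $G$ is one-ended, Lemma \ref{atmostexp} yields
\[
Div^G(n)\preceq e^n.
\]

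Putting the two bounds together gives $Div^G(n)\asymp e^n$, which is the claim. There is no real obstacle here: the corollary is essentially a repackaging statement whose content is that, once finite presentability is on the table, the Theorem's somewhat awkward upper bound $\max\{e^n,e^n Div^{H_i}(n)\}$ collapses to $e^n$ thanks to the general upper bound of Lemma \ref{atmostexp} applied to $G$ itself. The only thing to be mildly careful about is the $\preceq$-arithmetic (constants in the exponent are absorbed), but this is built into the definition of $\asymp$ recalled in Subsection \ref{div}.
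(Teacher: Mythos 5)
Your proof is correct and is exactly the argument the paper intends: the lower bound comes from the preceding theorem, and the upper bound comes from applying Lemma \ref{atmostexp} to $G$ itself, using the quoted equivalence of finite presentability of $G$ with that of the $H_i$'s. Your remark that one should not route the upper bound through $\max\{e^n,e^nDiv^{H_i}(n)\}$ is also the right instinct, since the $H_i$ need not be one-ended.
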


We split the proof of the theorem in two propositions.

\begin{prop}
 Let the group $G$ be hyperbolic relative to its proper subgroups $H_1,\dots, H_k$. Then $Div^G(n)\succeq e^n$.
\end{prop}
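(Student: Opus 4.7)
The plan is to construct, for each $n$, a triple $(a_n, b_n, c_n)$ with $d_G(a_n, b_n) \asymp n$ and $d_G(c_n, \{a_n, b_n\}) \asymp n$, and show that any path in $G$ joining $a_n$ to $b_n$ and avoiding $B_G(c_n, \delta n - \gamma)$ has length at least $e^{\varepsilon n}$. Since $G$ is properly relatively hyperbolic (and one-ended), there is an infinite-order loxodromic element $g \in G$ — an element no nontrivial power of which conjugates into any $H_i$. Such $g$ acts loxodromically on the hyperbolic Bowditch space $B = Bow(G)$, and the orbit $\{g^i\}$ is a quasi-geodesic in both $G$ and $B$. Set $a_n := g^{-n}$, $b_n := g^n$, $c_n := 1$; the distance estimates then follow from the quasi-geodesicity of the orbit.

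Given a candidate detour $\sigma \subseteq G$ from $a_n$ to $b_n$, note that $\sigma$ uses only $G$-edges, so $|\sigma|_B = |\sigma|_G$. In the hyperbolic space $B$, any $B$-geodesic from $a_n$ to $b_n$ shadows the axis of $g$ and in particular passes $d_B$-close to $c_n$. The standard exponential divergence bound in $\delta$-hyperbolic spaces then gives $|\sigma|_B \geq e^{\varepsilon r'}$ whenever $d_B(\sigma, c_n) \geq r'$. The task therefore reduces to producing a lower bound $r' \gtrsim n$ on $d_B(\sigma, c_n)$.

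The obstruction is that $c_n \in H_i$ for every $i$, so the horoballs attached to the $H_i$'s allow a priori $d_B$-shortcuts: a point $x \in H_i$ with $d_G(x, c_n) = D$ satisfies $d_B(x, c_n) \leq 2\log D + O(1)$. To neutralise these shortcuts, I would proceed as follows. If $\sigma$ realises a nontrivial $d_B$-shortcut to $c_n$ through the horoball over some $H_i$, then $\sigma$ must enter $N_1(H_i)$ at two points whose projections to $H_i$ are far apart. Lemma \ref{proj2:lem} implies that the corresponding sub-path of $\sigma$ carries $G$-length at least the $H_i$-distance between these projections. Moreover, since $g$ is loxodromic, the orbit $\{g^i\}$ leaves each peripheral coset at linear rate, so even just reaching $N_1(H_i)$ from $a_n$ (or from $b_n$) costs $\sigma$ a $G$-length of order $n$. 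Bundling the excursions using the BCP property (Proposition \ref{rh2}) and iterating over all the peripherals potentially exploited by $\sigma$, one argues that after paying for these excursions $\sigma$ must still traverse a transient (hyperbolic-core) portion of total $d_B$-length $\gtrsim n$, at which point the exponential divergence in $B$ yields $|\sigma|_G \geq e^{\varepsilon n}$.

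The main obstacle is making this quantitative: a single horoball excursion naively drops $d_B(\sigma, c_n)$ to $O(\log n)$, which only produces a polynomial lower bound via hyperbolic divergence in $B$. Upgrading this to genuinely exponential requires careful interplay between one-endedness of $G$ (which forces all detours into a single infinite component of $G \setminus B_G(c_n, \delta n - \gamma)$), the transient/deep decomposition from Proposition \ref{chartrans}, and the projection estimates of Lemma \ref{proj2:lem}, so that every attempted shortcut either costs $\sigma$ exponentially in $G$-length or leaves $\sigma$ with a long transient skeleton to which the hyperbolic divergence estimate can be applied directly. I expect this trade-off to be the technical heart of the proof.
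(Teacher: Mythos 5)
Your setup (a loxodromic element $g$ whose elementary closure $E(g)$ can be adjoined to the peripheral structure, with $a_n=g^{-n}$, $b_n=g^n$, $c_n=1$ on a transient quasi-axis) matches the paper's, but the route you then take through the Bowditch space does not close, and you say so yourself: the entire quantitative content of the proposition is deferred to a ``trade-off'' that you describe as the technical heart and do not carry out. The difficulty is real and, as set up, fatal to this route: a detour $\sigma$ can pass through a point of some coset $xH_i$ at $G$-distance $\sim n$ from $1$ while that point is at $Bow(G)$-distance only $O(\log n)$ from $1$, so the hyperbolic divergence estimate in $Bow(G)$ yields $|\sigma|_B\geq e^{\varepsilon\log n}$, i.e.\ only a polynomial bound. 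To recover an exponential bound you must convert $B$-length back into $G$-length along every horoball excursion, and no amount of invoking BCP or Lemma \ref{proj2:lem} in the vague form you give actually produces the needed inequality; the bookkeeping over an a priori unbounded number of peripheral excursions is precisely the part that is missing.

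The paper avoids $Bow(G)$ entirely. Since $\alpha$ (the quasi-axis of $g$) meets every $N_\mu(gH_i)$ in a uniformly bounded set, $trans_{\mu,R}(\alpha|_{[a,b]})=\alpha|_{[a,b]}$, and one runs the classical halving induction directly in $G$: given a path $\beta$ from $a$ to $b$, bisect it at $q$, apply the relative Rips condition (condition (3) of Proposition \ref{chartrans}) to the triangle $a,b,q$ to get $\alpha|_{[a,b]}\subseteq N_D(trans_{\mu,R}([a,q])\cup trans_{\mu,R}([q,b]))$, and conclude inductively that $d(c,\beta)\leq D\log_2 l(\beta)+O(1)$, hence $l(\beta)\succeq e^{\delta n}$ for any $\beta$ avoiding $B(c,\delta n-\gamma)$. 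This is exactly Lemma \ref{logest} with $trans=trans_{\mu,R}$, and it replaces, in one stroke, all of the horoball accounting your plan would require. I recommend discarding the passage through $Bow(G)$ and substituting this argument; everything you need is already packaged in (RH3).
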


\begin{proof}
 Denote $\calH=\{H_1,\dots,H_n\}$. By \cite{Os-hypel} there exists an infinite virtually cyclic subgroup $E(g)$ in $G$ so that $(G,\calH\cup\{E(g)\})$ is relatively hyperbolic. In particular, $E(g)$ is within bounded Hausdorff distance from a bi-infinite geodesic $\alpha$ so that, for each $\mu$, $diam(\alpha\cap N_\mu(gH_i))$ is uniformly bounded for each $g\in G$ and each $H_i$ (by $(\alpha_1)$ and quasi-convexity of peripheral sets). We would like to show that if the path $\beta$, which we assume to have length at least $1$ for simplicity, connects points $a,b\in \alpha$ on opposite sides of some $c\in\alpha$ then $d(c,\beta)\leq C\log_2(l(\beta))+C$ for some constant $C$. In order to do so, we modify a standard argument which can be found, e.g., in \cite[Proposition III.H.1.6]{BrHa-metspaces} (and which we already used in Lemma \ref{logest}).
For $\mu$ as in (RH3) and $R$ large enough we have $trans_{\mu,R}(\alpha)=\alpha$, and similarly for all subgeodesics of $\alpha$. Let $q\in\beta$ be so that $l(\beta|_{[a,q]})=l(\beta|_{[q,b]})$ and consider a geodesic triangle with vertices $a,b,q$. We have some $D$ so that $\alpha|_{[a,b]}=trans_{\mu,R}(\alpha|_{[a,b]})\subseteq N_{D}(trans_{\mu,R}([a,q])\cup trans_{\mu,R}([q,b]))$. We can assume by induction (starting with paths of length at most, say, $100$), that $trans_{\mu,R}([a,q])\subseteq N_{D\log_2(l(\beta)/2)+100}(\beta)$ and similarly for $[q,b]$, and we get $\alpha_{[a,b]}\subseteq N_{D\log_2(l(\beta))+100}(\beta)$ as required.
\end{proof}

The author was not able to find a reference for the following fact in the case of hyperbolic groups.

\begin{prop}
 Let the one-ended group $G$ be hyperbolic relative to $H_1,\dots, H_k$. Then $Div^G(n)\preceq \max\{e^n,e^nDiv^{H_i}(n)\}$.
\end{prop}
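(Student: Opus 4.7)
The plan is to combine a hyperbolic backbone detour in the Bowditch space with peripheral detours inside cosets close to $c$. Given $a,b,c\in G$ with $d(a,b)\le n$ and $d(c,\{a,b\})=r>0$, write $Q=B(c,\delta r-\gamma_0)$ for the ball to avoid. Take a $G$-geodesic $\alpha$ from $a$ to $b$ and, using Proposition \ref{chartrans}, decompose it as $\alpha=trans_{\mu,R}(\alpha)\sqcup\bigsqcup_j D_j$ with $D_j\subseteq N_\mu(g_jH_{i_j})$. If $\alpha$ avoids $Q$, take $\alpha$ itself; if $\alpha$ meets $Q$ only inside a single $D_j$, replace $D_j$ by a detour inside the coset $g_jH_{i_j}$ that avoids $Q$. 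Using that the endpoints $u_j,v_j$ of $D_j$ satisfy $d(u_j,v_j)\le n$ and that $d_\Gamma|_{g_jH_{i_j}}$ is quasi-isometric to a word metric on $H_{i_j}$ (Theorem \ref{hypembchar}(2)), the length of this replacement is $\preceq Div^{H_{i_j}}(Cn)$, which fits the claimed bound.

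For the generic case, where the intersection $\alpha\cap Q$ reaches the transient part of $\alpha$, the point $c$ lies close (in $\hat d$) to the hyperbolic backbone. I would then pass to $Bow(\Gamma)$ built from bounded nets in each coset: it is a bounded-degree hyperbolic graph that is one-ended (we may assume all $H_i$ are infinite, absorbing finite ones into the generating set without affecting the relative structure), and the argument of Lemma \ref{atmostexp} applies verbatim to such graphs to give $Div^{Bow(\Gamma)}(n)\preceq e^n$. This produces a $Bow$-detour from $a$ to $b$ of $Bow$-length $\preceq e^n$ avoiding a sufficiently large $Bow$-ball around $c$. I would lift it to a $G$-path by preserving every $G$-edge and replacing each excursion into a combinatorial horoball over a coset $P=g_jH_{i_j}$ by a $G$-path in $P$ between its entry and exit vertices: a coset geodesic if $P$ is far from $c$, or a $Div^{H_{i_j}}$-detour inside $P$ if $P$ is close to $c$.

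The main obstacle is length bookkeeping for the lifted horoball excursions: an excursion of $Bow$-length $L$ can in principle cover $G$-distance up to $e^L$ inside its coset, so a naive lift produces a super-exponential bound. I would handle this by choosing the $Bow$-detour so that no excursion dives deeper than $O(\log n)$ into any horoball. This is possible because the $Bow$-ball we must avoid only penetrates horoballs to depth $\preceq \delta r\preceq n$ (and the relevant case is $r\preceq n$, since otherwise $\alpha$ already avoids $Q$), and by $(\alpha_1)$ only boundedly many cosets are simultaneously close to $c$, so the detour is never forced to plunge deeper than the avoided ball. With this depth control each excursion covers $G$-distance $\preceq n$, contributes $\preceq n$ or $\preceq Div^{H_i}(n)$ to the lift depending on proximity to $c$, and summing over the $\preceq e^n$ edges of the $Bow$-detour yields total $G$-length $\preceq e^n\,Div^{H_i}(n)$, as required.
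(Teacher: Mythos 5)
There is a genuine gap in the central step of your argument: the reduction to the divergence of $Bow(\Gamma)$. First, the claim that the argument of Lemma \ref{atmostexp} ``applies verbatim'' to give $Div^{Bow(\Gamma)}(n)\preceq e^n$ is false as stated. That lemma works by counting the points of the group lying in an annulus, using that a ball of radius $R$ in a Cayley graph contains $\preceq e^{R}$ vertices. In $Bow(\Gamma)$ the horospherical levels are rescaled by $e^{-m}$ at depth $m$, so a metric ball of radius $R$ centered at a point of a coset $P$ already contains all net points of $P$ within $P$-distance roughly $e^{R/2}$, i.e.\ on the order of $\exp(e^{R/2})$ vertices when $H_i$ has exponential growth. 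The same counting therefore only yields a doubly exponential bound, and you have also not verified that $Bow(\Gamma)$ is one-ended or that the annulus-connectivity statement (which in the lemma is quoted for finitely presented groups) holds for it. Second, and more fundamentally, there is a scale mismatch between $G$-balls and $Bow$-balls. Since $d_{Bow}\leq d_G$, the only way to guarantee that the $G$-points of your lifted path avoid $B_G(c,\delta r-\gamma)$ is to make the $Bow$-detour avoid $B_{Bow}(c,\delta r-\gamma)$. But if $a$ and $c$ both lie within bounded distance of a common peripheral coset $P$ with $d_P(\pi_P(a),\pi_P(c))\approx r$, then $d_{Bow}(a,c)\approx 2\log r$, so $a$ itself lies inside $B_{Bow}(c,\delta r-\gamma)$ for large $r$ and no such detour exists. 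This configuration is not covered by your ``easy cases'' (the transient part of $[a,b]$ can still meet the forbidden ball), and it is exactly the situation where relative hyperbolicity matters; the divergence function of $Bow(\Gamma)$ at scale $d_{Bow}(a,b)$ only controls detours around balls of radius proportional to $d_{Bow}(c,\{a,b\})$, which can be logarithmically small compared to the radius $\delta r$ you must avoid.

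For contrast, the paper never leaves $G$: it uses one-endedness of $G$ and \cite[Lemma 3.4]{DMS-div} to produce \emph{some} (possibly enormous) path $\beta$ from $a$ to $b$ avoiding the ball, then for each point $x_i$ of $\beta$ takes the first entry point $y_i$ of a geodesic $[x_i,c]$ into $B(c,d(b,c))$. This yields a chain of $\preceq e^{n}$ points lying on the sphere of radius $d(b,c)$, hence outside the forbidden ball, and the relative Rips condition shows consecutive $y_i,y_{i+1}$ are either uniformly close or both close to a single peripheral coset, at which point the peripheral divergence $Div^{H_i}$ supplies the local detour. Your final paragraph's treatment of cosets near $c$ is in the same spirit as this last step, but the exponential backbone has to be produced inside $G$ (or by some argument that correctly accounts for the volume and metric distortion of the horoballs), not by quoting an exponential divergence bound for $Bow(\Gamma)$.
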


\begin{proof}
 It is enough to find a bound on $div_\gamma (a, b, c; \delta)$ when $a,b,c$ are elements of $G$ (as opposed to points in the interior of edges). Fix $a,b,c\in G$, and suppose without loss of generality $r=d(a,c)\leq d(b,c)$. We would like to find a ``short'' path connecting $a$ to $b$ outside $B=B(c,r/\delta-\gamma)$, for $\delta>0$ small enough and $\gamma$ large enough. We can assume $r\leq 10n$ for $n=d(a,b)$ (for otherwise a geodesic from $a$ to $b$ would avoid $B$). As $G$ is one-ended, by \cite[Lemma 3.4]{DMS-div} there exists a path $\beta$ connecting $a,b$ outside $B$. Consider $a=x_0,\dots,x_k=b$ on $\beta$ so that $d(x_i,x_{i+1})\leq 1$, where $x_i\in G$. Define $y_i\in G$ to be the first point of $[x_i,c]$ in $B'=B(c,d(b,c))$. We then have a sequence of points $a=y_0,\dots,y_k=b$ in $(B'\cap G)\backslash B$, which contains $\preceq e^n$ elements.
\par
If $G$ was hyperbolic, we would have a uniform bound on $d(y_i,y_{i+1})$. Then it would be readily seen that we can choose $0=j_0<\dots<j_l=k$ so that $d(y_{j_i},y_{j_{i+1}})$ is bounded and $l\preceq e^n$, so that the desired path can be obtained concatenating geodesics of the form $[y_{j_i},y_{j_{i+1}}]$.
\par
In the relatively hyperbolic case, it is easily seen by the relative Rips condition (and the deep components being contained in neighborhoods of peripheral sets) that for an appropriate $D$ we have either $d(y_i,y_{i+1})\leq D$ or there exists a left coset $P_i$ of some $H_i$ so that $y_i,y_{i+1}\in N_D(P_i)$. Also, all geodesics from $c$ to $P_i$ pass $D-$close to some $z_i\in P_i$, see Lemma \ref{proj1:lem}. Assume for convenience that we are considering a Cayley graph with respect to a generating set containing generating sets for the $H_i$'s, so that $P_i$ is connected. As $d(y_i,y_{i+1})\leq 2d(c,b)$ and peripheral subgroups are undistorted, there exists a path $\alpha$ in $P_i$ of length $\preceq Div^{P_i}(2d(c,b))$ connecting points $y'_i,y'_{i+1}$ at distance at most $D$ from $y_i,y_{i+1}$ which avoids a $P_i-$ball of radius $\delta_0\min\{d(y'_i,z_i),d(y'_{i+1},z_i)\}-\gamma_0$ around $z_i$. Again as peripheral subgroups are undistorted, we have that $\alpha$ avoids $B$, if we chose $\delta,\gamma$ large enough. Now, we can choose $0=j_0<\dots<j_l=k$ so that $l\preceq e^n$ and $y_{j_i}$ can be connected to $y_{j_{i+1}}$ by a path avoiding $B$ and of length $\preceq \max\{Div^{H_i}(n)\}$. So, we can construct a path of length $\preceq \max\{e^n Div^{H_i}(n)\}$ connecting $a$ to $b$.
\end{proof}

\subsection{Approximation with tree-graded spaces}
\label{tg}
A standard result for hyperbolic spaces, see e.g. \cite[Lemma 2.12]{Ghys-dlH-90-hyp-groups}, is that any finite
configuration of points and geodesics can be approximated by some tree, the error in the approximation depending on the hyperbolicity constant and the number of points and geodesics involved only. This result is very useful to reduce computations in hyperbolic spaces to the tree case. We generalize this to relatively hyperbolic spaces. Fix from now on a relatively hyperbolic space $(X,\calP)$ and $\mu,R$ as in (RH3).
\begin{defn}
Let $\calA$ be a collection of points and peripheral sets. We denote by $\gamma(\calA)$ the union of $\calA$ and all $trans_{\mu,R}(\gamma)$ for $\gamma$ a geodesic connecting points each lying on some $A\in\calA$.
\end{defn}

We will approximate finite configurations in $X$ with tree-graded spaces, a notion that has been defined in \cite{DS-treegr}.

\begin{defn}
A geodesic complete metric space $\F$ is \emph{tree-graded} with respect to a collection of closed geodesic subsets of $\F$ (called \emph{pieces}) if the following properties are satisfied:
\par
$(T_1)$ distinct pieces intersect in at most one point,
\par
$(T_2)$ each geodesic simple triangle is contained in one piece.
\end{defn}

For the purposes of this section, a tree-graded space can be thought of as a tree-like arrangement of pieces. An example of tree-graded space is a the Cayley graph of $A*B$ with respect to a generating set which is th union of generating sets of $A$ and $B$.

\begin{lemma}
 Let $(X,\calP)$ be relatively hyperbolic and fix an approximating graph $\Gamma_P$ for each $P\in\calP$ as well as $\mu,R$ as in Proposition \ref{rh1}. For each $n\in\N$ there exists $C$ with the following property. Suppose that $\calA$ is a collection of points and peripheral sets and $\calA$ has cardinality at most $n$. Then there exists a tree-graded space $T$, each of whose pieces is isometric to some $\Gamma_P$, and a $(C,C)-$quasi-isometric embedding $f:\gamma(\calA)\to T$.
\end{lemma}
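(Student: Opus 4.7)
The plan is to work in the Bowditch space $Bow(X)$, which is hyperbolic by Proposition \ref{rh1}, and to combine the classical tree approximation for finite configurations in a hyperbolic space with a decoration step that glues in copies of the $\Gamma_P$'s. For each $A\in\calA$ pick a base point $p_A\in A$ (arbitrary if $A$ is peripheral), producing at most $n$ points in $Bow(X)$. Let $\Omega\subseteq Bow(X)$ be the union of $Bow(X)$-geodesics between each pair of base points. Since $Bow(X)$ is $\delta$-hyperbolic, the standard tree approximation theorem yields a tree $T_0$ and a $(1,C_0)$-quasi-isometric embedding $\phi_0\colon\Omega\to T_0$, isometric on each individual geodesic, with $C_0=C_0(n,\delta)$.

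Next, let $\mathcal{R}\subseteq\calP$ consist of all $P\in\calA$ together with all peripheral sets whose horoball $\calH(\Gamma_P)$ is entered by some geodesic of $\Omega$. By $(\alpha_1)$ each such geodesic has at most one deep component per horoball, and Corollary \ref{rh1geod} identifies entries/exits in $Bow(X)$ with the transient/deep decomposition of the corresponding $X$-geodesics up to bounded error. The $\phi_0$-image of a deep subsegment is a short subarc of $T_0$, which I collapse to a single marker point, incurring additive error bounded in $\delta$. The tree-graded space $T$ is then obtained from $T_0$ by replacing, for each $P\in\mathcal{R}$, the subtree of $T_0$ spanned by all markers coming from $P$ with a corresponding subtree inside a copy of $\Gamma_P$, glued along the natural correspondence between entry points of geodesics into $\calH(\Gamma_P)$ and coarse projections onto $P$ provided by Lemmas \ref{proj1:lem} and \ref{proj2:lem}. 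Property $(T_1)$ holds by construction and $(T_2)$ follows since simple geodesic triangles in $T$ live either in the tree part or entirely inside a single attached piece.

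Define $f\colon\gamma(\calA)\to T$ by sending each point of a peripheral set $P\in\calA$ to its image in the attached piece via the fixed uniform quasi-isometry $P\to\Gamma_P$, each $p_A$ to $\phi_0(p_A)$, and each transient point on an $X$-geodesic between base points to the $\phi_0$-image of the corresponding point on the $Bow(X)$-geodesic. The constants of $f$ aggregate those of $\phi_0$, of the uniform maps $P\to\Gamma_P$, and of the coarse equivalence of $d_X$ with $d_{Bow}|_X$, yielding $C=C(n)$.

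The main obstacle is coherence of the decoration step: I must verify that all markers arising from the same peripheral set $P$ span a tree-like region in $T_0$ whose structure embeds faithfully into $\Gamma_P$. This amounts to showing that entry points of $Bow(X)$-geodesics into $\calH(\Gamma_P)$ emanating from the various base points correspond, via Lemma \ref{proj2:lem}, to the coarse projections $\pi_P(p_{A_i})$, whose relative arrangement in $P$ reflects the arrangement of the markers in $T_0$ up to bounded additive error. Once this is verified, the accumulated bounded errors are absorbed into the final constant $C$, completing the quasi-isometric embedding statement.
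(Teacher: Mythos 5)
Your overall strategy is the same as the paper's: approximate the configuration by a tree in the hyperbolic space $Bow(X)$, excise the parts of the tree coming from horoballs, glue in copies of the $\Gamma_P$'s, and transport $\gamma(\calA)$ into the result via the correspondence between $Bow(X)$-geodesics and transient sets. However, the step you yourself flag as ``the main obstacle'' --- the coherence of the decoration --- is precisely the content of the construction, and you leave it unverified. Concretely, nothing in your argument guarantees that the regions of $T_0$ you want to replace are pairwise disjoint: the images under $\phi_0$ of deep excursions into two different horoballs $\calH(\Gamma_P)$, $\calH(\Gamma_{P'})$ (coming from different geodesics of $\Omega$) can interleave in $T_0$, so ``the subtree spanned by all markers coming from $P$'' may meet the analogous subtree for $P'$, and the surgery is then not well defined and $(T_1)$ fails. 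The paper resolves this with two devices you are missing: it replaces each peripheral set $P\in\calA$ by an \emph{ideal} point of $Bow(X)$ (a quasi-geodesic line asymptotic to vertical rays in $\calH(\Gamma_P)$ in both directions), rather than by a single base point $p_A\in P$; and it \emph{shrinks} the horoballs, replacing $\calH(\Gamma_P)$ by $\Gamma_P\times(k+\N)$ for $k$ large, so that only sufficiently deep excursions are excised and the corresponding sets $S_P=\gamma(\hat f(\calH(\Gamma_P)))$ in the tree become pairwise disjoint and uniformly separated. Without the ideal points, your single base point $p_P$ also fails to encode where the various projections $\pi_P(p_{A'})$ sit inside $P$, which is exactly the data the attached copy of $\Gamma_P$ must record.

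Two further gaps: $\gamma(\calA)$ contains the transient sets of geodesics between \emph{arbitrary} points of the peripheral sets in $\calA$, not just between your finitely many base points, so your $f$ is not defined on all of $\gamma(\calA)$; one needs the fact (used in the paper) that moving an endpoint of a geodesic within a peripheral set changes the transient set only up to bounded Hausdorff distance. Finally, the quasi-isometric embedding estimate is asserted rather than proved; the lower bound on $d_T(f(x),f(x'))$ requires decomposing a $Bow(X)$-geodesic into subsegments near $X$ and subsegments inside horoballs and comparing the resulting sums of lengths on both sides, as in the paper's concluding computation.
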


The idea is just to apply the statement for hyperbolic spaces in the Bowditch space and to use Proposition \ref{rh1}, ``moreover'' part included, to translate the information we get back to $X$.

\begin{proof}
 Fix the Bowditch space $Bow(X)$ with respect to the approximating graphs $\{\Gamma_P\}$ and let $\calA$ be as in the statement. Let $\hat{\calA}$ be the collection of all points in $\hat{\calA}$ and quasi-geodesic lines (with uniform constants) which are asymptotic to vertical rays in a horoballs corresponding to some $P\in\calA\cap \calP$ in both directions.

Denoting by $\gamma(\hat{A})$ the collection of all geodesics with endpoint in $\hat{A}$, we have that there exists a $(1,\hat{C})-$quasi-isometry $\hat{g}:\hat{T}\to \gamma(\hat{\calA})$ with quasi-inverse $\hat{f}$, for some tree $\hat{T}$ and $\hat{C}=\hat{C}(n)$. We will denote the convex core of a set $S$ in $\hat{T}$ again by $\gamma(S)$.

 By substituting each combinatorial horoball $\calH(\Gamma_P)$ with (the full subgraph with vertex set) $\Gamma_P\times (k+\N)$ for a suitable $k=k(\hat{C},Bow(X))$, we can make sure that $S_P=\gamma(\hat{f}(\calH(\Gamma_P)))$ are all disjoint and the distance between two such sets is at least some $\epsilon>0$.
\par
For each $P\in\calP$, remove $\mathring{S_P}$ from $\hat{T}$ and construct $T$ by gluing copies of $\Gamma_P$ so that each each point $p\in\partial S_P$ is glued to a point $q\in\Gamma^0_P$ at minimal distance from $\hat{g}(p)$. Notice that $T$ is tree-graded.
\par
We have that whenever a geodesic $\alpha$ in $X$ and a geodesic $\beta$ in $Bow(X)$ have the same endpoints then $trans_{\mu,R}(\alpha)$ is within Hausdorff distance bounded by some $D=D(k, Bow(X))$ from $\beta\cap X$. Also, moving the endpoint of a geodesic inside a peripheral set affects the transient set only up to finite Hausdorff distance. Using these two facts one can show that each $x\in\gamma(\calA)$ lies within uniformly bounded distance from some $y=y(x)\in \gamma(\hat{\calA})\cup \bigcup_{P\in\calP\cap\calA} \Gamma_P$. Define $f(x)=\hat{f}(y)$ if $y$ lies in $\gamma(\hat{\calA})$ and $f(x)=y$ if $y\in \bigcup_{P\in\calP\cap\calA} \Gamma_P$ (notice that such $\Gamma_P$'s are actually subsets of $T$).
\par
Let $x,x'\in \gamma(\calA)$ and consider $y=y(x),y'=y(x')$ as above. Let $\hat{\gamma}$ be a geodesic from $y$ to $y'$ and notice that $\hat{\gamma}\cap X$ is contained in a suitable neighborhood of $\gamma(\hat{\calA})\cup\{y,y'\}$. As we shrinked the horoballs, we can subdivide $\hat{\gamma}$ into subgeodesics either of length at least $k>0$ and contained in $N_{k}(X)$ (type 1) or contained in a horoball (type 2). We know (similarly to Corollary \ref{rh1geod}) that we can obtain a quasi-geodesic in $X$ from $y$ to $y'$ by substituting the subgeodesics $\gamma_i$ of type 1 with geodesics in $X$ whose endpoints are within distance $k$ from the endpoints of the $\gamma_i$'s and the geodesics of type 2 with geodesics each contained in a suitable neighborhood of a peripheral set. Also, each subgeodesic of type 1 has endpoints close to the image through $\hat{g}$ of some geodesic $\alpha_i$ in $T$ (of length at least $\epsilon>0$), and the $\alpha_i$'s can be concatenated with geodesics each contained in some $\Gamma_P$ to get a geodesic from a point close to $f(x)$ to a point close to $f(y)$.

Let us write $A\approx B$ if the quantities $A,B$ differ by some multiplicative and additive constants depending on the data in the statement of the lemma. From the discussion above we have, denoting $\calT_i$ the collection of subgeodesics as above of type $i$,
$$d(x,x')\approx \sum_{\gamma_i\in\calT_1} l(\gamma_i)+\sum_{[x_i,y_i]\in\calT_2} d_X(x_i,y_i)\approx d(f(x),f(x')),$$
hence $f$ is a quasi-isometric embedding.
\end{proof}

\subsection{Combination theorems}
\label{comb}
Theorem \ref{guessgeod} can be used to give alternative proofs of some known combination theorems for relatively hyperbolic groups from \cite{Da-comb}. As the results are known, we consider only one example and provide a sketch of proof only.
\begin{thm}[\cite{Da-comb}]
 Let $G_1, G_2$ be hyperbolic relative to a common subgroup $H$. Then $G=G_1 *_H G_2$ is hyperbolic relative to $H$.
\end{thm}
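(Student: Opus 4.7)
My plan is to apply the Guessing Geodesics Lemma (Theorem~\ref{guessgeod}) to the Cayley graph $\Gamma$ of $G=G_1*_H G_2$ built from the union of generating sets of $G_1$ and $G_2$, with $\calP$ the collection of left cosets of $H$ in $G$. The geometry is organised by the Bass-Serre tree $T$ of the splitting, whose vertices are cosets $gG_i$ and whose edges are cosets $gH$, with two adjacent vertex cosets meeting in a single edge coset.

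For $x,y\in G$, the tree geodesic from $xG_i$ to $yG_j$ passes through a sequence of vertex-group cosets $V_0,V_1,\dots,V_n$ with $E_k:=V_{k-1}\cap V_k$ an $H$-coset. Choose $p_0=x$, $p_n=y$, and for intermediate $k$ choose $p_k\in E_k$ to be a coarse closest-point projection inside $V_{k-1}$. Define $\eta(x,y)$ as the concatenation of the geodesics $\gamma_k=[p_{k-1},p_k]$ taken inside the appropriate vertex group's Cayley graph, and set $trans(x,y):=\bigcup_k trans_{\mu,R}(\gamma_k)$ using the known relatively hyperbolic structure of each $G_i$ (Proposition~\ref{chartrans}).

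Conditions $(1)$, $(4)$, $(5)$ and $(6)$ of Theorem~\ref{guessgeod} reduce either to the corresponding properties inside a single vertex group, when the relevant points lie in one $V_k$, or to a ``bottleneck'' observation: two cosets of $H$ that are not contained in a common vertex coset are separated by at least one edge in $T$, which forces the intersection of their neighbourhoods to have uniformly bounded diameter. Coherence $(2)$ requires comparing $\eta(x',y')$ with $\eta(x,y)|_{[x',y']}$: passing from one to the other changes only the projection points $p_k$, and by Lemma~\ref{proj2:lem} applied inside each $G_i$ any such change happens inside a bounded neighbourhood of the same $H$-coset, so the transient sets agree up to finite Hausdorff error.

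The main work, and the expected obstacle, is the relative Rips condition $(3)$. I plan to use the Bass-Serre tripod spanned by $x,y,z$: when its centre is an edge $gH$, two of the three paths share full arms with the third and the remaining transient points are concentrated near $gH$, which appears on all three paths by condition $(6)$; when the centre is a vertex coset $gG_i$, each of the three paths enters $gG_i$ through a distinct $H$-coset and then runs a geodesic in $G_i$, so the relative Rips condition inside $G_i$ controls the triangle there. Matching the transient set of $\eta$ to that of this reorganised triangle in $G_i$ requires the quasi-convexity of $H$-cosets inside each $G_i$, together with $(\alpha_1)$ in $G_i$, to prevent two arms from sharing a long excursion into different $H$-cosets; this is the delicate bookkeeping. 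Once it is done, Theorem~\ref{guessgeod} yields that $\Gamma$ is hyperbolic relative to the cosets of $H$, as desired.
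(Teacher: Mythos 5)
Your proposal follows essentially the same route as the paper's own (sketched) proof: both define $\eta(x,y)$ by concatenating geodesics between consecutive $H$-coset intersections along the Bass--Serre geodesic, take $trans(x,y)$ to be the union of the transient sets of these pieces in the vertex groups' relatively hyperbolic structures, and verify the relative Rips condition by analysing the Bass--Serre tripod and invoking the relative Rips condition for small polygons inside a single vertex coset. The level of detail and the identified delicate points (coherence of projections, the triangle condition at the tripod centre) match the paper's sketch.
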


\emph{Sketch of proof.} Let $x,y\in G$. All paths from $x$ to $y$ have to cross a sequence $x\in X_1,\dots, X_n\ni y$ of left cosets of $G_1$ and $G_2$ corresponding to a geodesic in the Bass-Serre tree connecting vertices corresponding to $x$ and $y$. Define $\eta(x,y)$ by concatenating in the suitable order geodesics from $x$ to $X_1\cap X_2$, from $X_{n-1}\cap X_n$ to $y$ and from $X_{i-1}\cap X_{i}$ to $X_{i}\cap X_{i+1}$, and let $trans(x,y)$ be the set of all transient points for the said geodesics (with suitable constants). Let us for example check $3)$. Fix $x,y,z\in G$. Consider the corresponding tripod in the Bass-Serre tree we see that each point $p\in trans(x,y)$ lies in some geodesic $\gamma$ contained in a coset $X$ of $G_1$ or $G_2$ satisfying the following property. There exists cosets $Y_1,Y_2$ and possibly of $Y_3$ of $H$ contained in $X$ so that $\gamma$ connects $Y_1$ to $Y_2$ and either
\begin{itemize}
 \item $\eta(x,z)$ or $\eta(z,y)$ contains a subgeodesic with endpoints in $Y_1, Y_2$, or
 \item $\eta(x,z)$ contains a geodesic connecting $Y_1$ to $Y_3$ and $\eta(y,z)$ contains a geodesic connecting $Y_3$ to $Y_2$.
\end{itemize}

In both cases, by the relative Rips condition for $m-$gons with $m\leq 6$, we get the desired containment.\qed

\bibliographystyle{alpha}
\bibliography{metricrelhyp}
\end{document}